\newtheorem{lemma}{Lemma}[section]
\newtheorem{proposition}[lemma]{Proposition}
\newtheorem{theorem}[lemma]{Theorem}
\newtheorem{corollary}[lemma]{Corollary}
\newtheorem{question}[lemma]{Question}
\theoremstyle{definition}
\newtheorem{definition}[lemma]{Definition}
\newtheorem{remark}[lemma]{Remark}
\newcommand{\Z}{\mathbb{Z}}
\newcommand{\Q}{\mathbb{Q}}
\newcommand{\C}{\mathbb{C}}
\newcommand{\N}{\mathbb{N}}
\newcommand{\rt}{\tilde{\rho}}
\newcommand{\slC}{\mathrm{SL}_2(\mathbb{C})}
\title[An embedding of skein algebras from Dehn-Thurston coordinates]{An embedding of skein algebras of surfaces into quantum tori from Dehn-Thurston coordinates}
\author{Renaud Detcherry and Ramanujan Santharoubane}
\date{} 
\address{Institut de Mathématiques de Bourgogne, UMR 5584 CNRS, Université Bourgogne Franche-Comté, F-2100 Dijon, France}
\email{renaud.detcherry@u-bourgogne.fr}
\address{Laboratoire de mathématique d’Orsay, UMR 8628 CNRS,
Bâtiment 307, Université Paris-Saclay, 
91405 ORSAY Cedex, FRANCE}
\email{ramanujan.santharoubane@universite-paris-saclay.fr}
\begin{document}

\begin{abstract}We construct embeddings of Kauffman bracket skein algebras of surfaces (either closed or with boundary) into localized quantum tori using the action of the skein algebra on the skein module of the handlebody. We use those embeddings to study representations of Kauffman skein algebras at roots of unity and get a new proof of Bonahon-Wong's unicity conjecture. Our method allows one to explicitly reconstruct the unique representation with fixed classical shadow, as long as the classical shadow is irreducible with image not conjuguate to the quaternion group.
\end{abstract}
\maketitle
\section{Introduction}
\label{sec:intro}
For $\Sigma$ a compact connected oriented surface and $G$ a Lie group, its character variety is
$$X(\Sigma,G)=\lbrace \rho:\pi_1(\Sigma)\longrightarrow G\rbrace \sslash G,$$
where $G$ acts by conjugation. One of the simplest and most intruiging character variety is obtained when $G=\mathrm{SL}_2(\C),$ as the character variety is then well-understood algebraically while connected to hyperbolic geometry, knot theory and $3$-dimensional topology, with many beautiful applications. Concretely speaking, $X(\Sigma,\mathrm{SL}_2(\C))$ is just an algebraic variety of dimension $6g-6+3n$ when $\Sigma$ is a surface with negative Euler characteristic of genus $g$ with $n$ boundary components, and the ring of regular functions on the character variety $\C[X(\Sigma,\mathrm{SL}_2(\C))]$ is just a commutative algebra.

Skein algebras and skein modules, introduced independently by Przytycki \cite{Prz91} and by Turaev\cite{Tur88}, give a quantization of character varieties. The skein module $S(M)$ of a compact oriented $3$-manifold $M$ is a $\Z[A^{\pm 1}]$-module which is the quotient of the free module spanned by isotopy classes of framed links in the interior of $M,$ modulo the famous Kauffman relations:
\begin{align*}
 \begin{minipage}{0.6in}\includegraphics[width=0.7in]{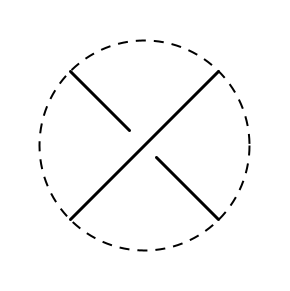}\end{minipage}
& =  
A
\begin{minipage}{0.7in}\includegraphics[width=0.7in]{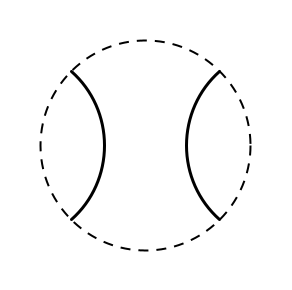} \end{minipage}
+ A^{-1} \begin{minipage}{0.7in} \includegraphics[width=0.7in]{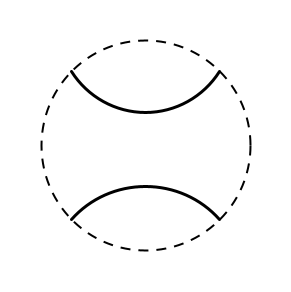}  \end{minipage} \\  
   L \cup U & = (-A^2-A^{-2}) L
   \end{align*}
where the first relation relates $3$ framed links in $M$ that are identical except in a small ball, and in the second relation $U$ is the unframed unknot. In the case where $M=\Sigma \times [0,1],$ with $\Sigma$ a compact oriented surface, the skein module has a natural structure of algebra given by the stacking operation. We will write $S(\Sigma)$ for the skein algebra of a surface (i.e. the skein module of $\Sigma\times [0,1],$ with its natural algebra structure). We recall also that when $M$ is a compact oriented surface, the skein module $S(M)$ is a module over the skein algebra $S(\partial M),$ again for the natural stacking operation.

A result of Bullock \cite{Bul97} and Przytycki--Sikora \cite{PS00} asserts that setting the parameter $A$ to $-1,$ the skein algebra $S(\Sigma)\otimes_{A=-1} \C$ is isomorphic to the commutative algebra $\C[X(\Sigma,\mathrm{SL}_2(\C))],$ and a further result of Turaev \cite{Tur91} states that $S(\Sigma)$ is actually a deformation quantization of that algebra in the direction of the Atiyah--Bott--Goldmann Poisson bracket. 

Moreover, the skein algebras of surfaces have deep ties with much of quantum topology, in particular with the Jones polynomials, the Witten--Reshetikhin--Turaev invariants of $3$-manifolds and their associated TQFTs. A better understanding of skein algebras (or their $3$-dimensional counterparts: skein modules) seems to be key for working on many of the open conjectures in quantum topology \cite{LZ17}\cite{MS21}\cite{BWY21}.
Contrary to functions on the $\mathrm{SL}_2(\C)$-character variety, skein algebras are a difficult object to tackle, in part due to being non-commutative algebras. For instance, except for a few small surfaces,  there is at the moment no-known  presentations of $S(\Sigma)$ besides the definition. 

However, pioneering work of Bonahon and Wong \cite{BW11}\cite{BW16}\cite{BW17}\cite{BW19} led to a breakthrough in our understanding of skein algebras. They constructed an embedding, the \textit{quantum trace map} from the skein algebras of a surface $\Sigma$ with $n\geq 1$ punctures to a quantum torus. A quantum torus is a non-commutative algebra of the form $\Z[A^{\pm 1}]\langle X_i, i \in I\rangle/\lbrace X_iX_j=A^{\sigma_{ij}}X_jX_i \rbrace;$ in a way, quantum tori are the simplest possible non-commutative algebras. Bonahon and Wong used the Chekov--Fock quantization of the Teichmüller space as their target space to define their quantum trace map, then they managed to quantize the map from character variety to shear coordinates. Some difficult computations are required to check that their formulas indeed yield an algebra morphism. Their method of defining a quantum trace map has since been simplified by L\^e in \cite{Le19}, and recently extended by L\^e and Yu to $\mathrm{SL}_n$ character variety.

The first result of this paper is to propose an alternative way of defining an embedding of the skein algebra into a quantum torus; or rather in our case a \textit{localized} quantum torus. Let $\mathcal{P}$ be a pants decomposition of $\Sigma$ consisting of $n$ curves non parallel to the boundary and $b$ curves parallel to the boundary. We write $\hat{\Sigma}$ for the closed surface obtained from $\Sigma$ by filling the boundary components by disks. We consider the quantum torus $\mathcal{T}(\mathcal{P})$ over $\Z[A^{\pm 1}]$ with $2n+b$ variables $E_1,\ldots,E_n,Q_1,\ldots,Q_n,C_1,\ldots,C_b$ where all variables commute except $Q_i$ and $E_i$ (for all $i \in \{1,\ldots,n\}$) that satisfy $Q_i E_i = A E_i Q_i$. Viewed as a ring, the quantum torus $\mathcal{T}(\mathcal{P})$ is an integral domain so we can define $\mathcal{A}(\mathcal{P})$ to be a $\Z[A^{\pm 1}]$-algebra containing $\mathcal{T}(\mathcal{P})$ where $A^k Q_i^2-A^{-k} Q_i^{-2}$ is invertible for all $1 \le i \le n$ and $k \in \Z$. The algebra $\mathcal{A}(\mathcal{P})$ will be called a localized quantum torus.

\begin{theorem}\label{thm:embeddingIntro}
There is an injective $\Z[A^{\pm 1}]$-algebra homomorphism $$\sigma : S(\Sigma) \to \mathcal{A}(\mathcal{P})$$
that factors through the natural action $S(\Sigma) \longrightarrow \mathrm{End}(S(H,\Q(A)))$ where $H$ is a handlebody with boundary $\hat{\Sigma},$ such that curves in $\mathcal{P}$ bound a disk in $H.$ Moreover for any curve $\alpha \in \mathcal{P}$ there exists $Q \in \{Q_1,\ldots,Q_n,C_1,\ldots,C_b \}$ such that $$\sigma(\alpha) = -(A^2 Q^2+A^{-2} Q^{-2})$$

\end{theorem}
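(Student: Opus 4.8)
The plan is to take $\sigma$ to be the representation $\rho\colon S(\Sigma)\to\mathrm{End}(S(H,\Q(A)))$ appearing in the statement, written out in a basis of $S(H,\Q(A))$ adapted to $\mathcal P$, and to read off both the explicit formula and the injectivity from this description. First I would recall the construction of $\rho$: realising $\Sigma$ as the complement in $\hat\Sigma=\partial H$ of the $b$ capping disks, one lets a framed link in a collar of $\Sigma\subset\partial H$ act on $S(H,\Q(A))$ by pushing it into $H$ and stacking; since $\rho$ is the natural action, any $\sigma$ built from it factors through it by construction. Because each $\alpha\in\mathcal P$ bounds a disk $D_\alpha\subset H$, applying the Jones--Wenzl recoupling calculus to the disks $\{D_\alpha\}_{\alpha\in\mathcal P}$ yields the standard basis $\{u_\lambda\}$ of $S(H,\Q(A))$ adapted to $\mathcal P$, indexed by the numbers $\lambda=(\lambda_\alpha)_{\alpha\in\mathcal P}$ of strands of $u_\lambda$ crossing each $D_\alpha$ (subject to the admissibility constraints at the triple points of the dual spine of $H$). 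Writing $\rho$ in this basis is what makes everything computable.

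\emph{Pants curves are diagonal; the explicit formula.} For $\alpha\in\mathcal P$ one isotopes $\rho(\alpha)$ to a small unknot in $H$ that encircles exactly the $\lambda_\alpha$ parallel strands passing through $D_\alpha$; evaluating such an encircling unknot is a one-strand skein computation, and it multiplies $u_\lambda$ by a scalar of the form $-(A^{2m}+A^{-2m})$ with $m$ an affine function of $\lambda_\alpha$ (up to the sign normalisation of the basis). Thus $\rho(\alpha)$ is diagonal, and letting $Q\in\{Q_1,\dots,Q_n,C_1,\dots,C_b\}$ be the diagonal operator acting on $u_\lambda$ by $A^{\,m-1}$ gives $\rho(\alpha)=-(A^2Q^2+A^{-2}Q^{-2})$, which is the ``Moreover'' clause. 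Letting in addition every $Q_i,C_j$ act diagonally as above and every $E_i$ act by the (suitably normalised) operator raising the colour $\lambda_{\gamma_i}$ by one, a check on basis vectors gives the defining relations of $\mathcal T(\mathcal P)$ — only $Q_iE_i=AE_iQ_i$, all other generators commuting — and, after localising appropriately, realises $\mathcal A(\mathcal P)$ as an algebra of operators on $S(H,\Q(A))$.

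\emph{The image lies in $\mathcal A(\mathcal P)$.} Since $S(\Sigma)$ is spanned by multicurves it suffices to show $\rho(\delta)\in\mathcal A(\mathcal P)$ for a multicurve $\delta$. Put $\delta$ in minimal position with respect to $\mathcal P$; cutting $\delta$ along the disks $D_\alpha$ and re-expanding the resulting tangles in the recoupling basis writes $\rho(\delta)u_\lambda=\sum_\mu c_{\lambda\mu}\,u_\mu$ as a finite sum in which $|\mu_\alpha-\lambda_\alpha|\le i(\delta,\alpha)$ for every $\alpha$, with the coefficients $c_{\lambda\mu}$ Laurent polynomials in $A$ and in the weights $A^{\lambda_\alpha}$, divided by products of theta-symbols and quantum integers coming from the recoupling moves. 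The key point — and the step I expect to be the main obstacle — is that after the standard cancellations in recoupling theory the denominators that genuinely survive are exactly values on the basis of elements of the form $A^kQ_\alpha^2-A^{-k}Q_\alpha^{-2}$, so that $\rho(\delta)$ lies in the copy of $\mathcal A(\mathcal P)$ above; this requires a careful bookkeeping of the recoupling calculus. Granting it, $\sigma:=\rho$ is a $\Z[A^{\pm1}]$-algebra homomorphism $S(\Sigma)\to\mathcal A(\mathcal P)$ that factors through $\rho$.

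\emph{Injectivity.} Finally I would filter $S(\Sigma)$ by Dehn--Thurston complexity, first by $\sum_\alpha i(\delta,\alpha)$ and then by total twisting number. The computation above shows that $\sigma$ is triangular for this filtration: the leading part of $\sigma(\delta)$ is an invertible scalar (coming from the highest-weight term of the fusion, whose Jones--Wenzl coefficient is a unit) times $\prod_{i}E_i^{\,i(\delta,\gamma_i)}$ multiplied by a monomial in the $Q_i$ and $C_j$ whose exponents encode the twisting numbers of $\delta$, the relation $Q_iE_i=AE_iQ_i$ being precisely the quantisation of the Dehn twist. Since distinct multicurves have distinct Dehn--Thurston coordinates, their leading monomials are distinct up to invertible scalars; hence in a hypothetical relation $\sum_\delta a_\delta\sigma(\delta)=0$ the coefficient of a multicurve of maximal complexity must vanish, a contradiction. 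Therefore $\sigma$ is injective, and in particular $\rho$ is a faithful representation of $S(\Sigma)$ on $S(H,\Q(A))$.
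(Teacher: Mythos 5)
Your overall strategy coincides with the paper's: express the curve operators $T^{\gamma}$ on $S(H,\Q(A))$ in the graph basis $\varphi_c$ attached to $\mathcal{P}$ and read $\sigma$ off the matrix coefficients, with the diagonal computation for pants curves giving the ``moreover'' clause. The first genuine gap is the step where you ``realise $\mathcal{A}(\mathcal{P})$ as an algebra of operators on $S(H,\Q(A))$'', which is what your argument uses to conclude that $\sigma$ is an algebra homomorphism. This is false as stated: raising a single colour $\lambda_i$ by one violates the parity (admissibility) condition at the adjacent vertices, the inverses $E_i^{-1}$ cannot be defined consistently near the boundary of the admissible cone (so $E_iE_i^{-1}=\mathrm{Id}$ fails there), and the localized denominators $A^kQ_i^2-A^{-k}Q_i^{-2}$ vanish on some basis vectors. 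The paper never makes the quantum torus act on $S(H,\Q(A))$: Proposition \ref{prop:expand} only asserts the coefficient formula for colourings in a ``large subset'' $V_{\gamma}$, Lemmas \ref{lemma:largeset} and \ref{lemma:uniqueCoeff} show the rational coefficients are nonetheless uniquely determined, and multiplicativity of $\sigma$ is then checked by composing two curve operators on a suitable large subset and comparing with the product law of $\mathcal{A}(\Gamma)$. Some version of this generic-colouring bookkeeping is unavoidable. Likewise, the point you explicitly ``grant'' --- that the only denominators surviving the recoupling are of the form $A^kQ_i^2-A^{-k}Q_i^{-2}$, with none attached to the boundary variables $C_j$ --- is precisely what the fusion-rule analysis of Section \ref{sec:curveOpCoeff} establishes, so it cannot simply be assumed.

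The injectivity step also has a gap: the extremal coefficient of $\sigma(\delta)$ is in general \emph{not} a unit times a monomial in the $Q_i,C_j$. For example, for a curve meeting a separating edge $c$ twice, the coefficient of $E_c^2$ is $-\bigl(Q_{d_1}Q_{d_4}Q_c^{-1}-Q_{d_1}^{-1}Q_{d_4}^{-1}Q_c\bigr)\bigl(Q_{d_2}Q_{d_3}Q_c^{-1}-Q_{d_2}^{-1}Q_{d_3}^{-1}Q_c\bigr)$, which is neither a monomial nor invertible in $\mathcal{A}(\mathcal{P})$. What is true, and what the paper actually uses, is: (i) the extremal coefficients are nonzero --- this is a genuinely nontrivial input, deduced in the paper from asymptotics of curve operators in $\mathrm{SU}(2)$-TQFT (Theorem 1.3 of \cite{Det16} combined with Lemma 4.3 of \cite{CM12}), not from a ``unit Jones--Wenzl coefficient''; and (ii) applying a fractional twist along $\alpha_j$ multiplies the extremal coefficient by the monomial $-A^{2\varepsilon+|k_j|}Q_j^{2\varepsilon}$ (Lemma \ref{lemma:fracdehn}), so multicurves with the same intersection vector but different twist coordinates have extremal coefficients differing by linearly independent monomial factors. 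With these two inputs your triangularity argument becomes the paper's proof of injectivity; without them, the assertion that ``leading monomials are distinct up to invertible scalars'' is unsupported.
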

We remark that when $\Sigma$ has boundary, $S(H,\Q(A))$ is the sum of relative skein modules of $H$ where we add a colored point in each filling disk of $\hat{\Sigma}.$

Note that in particular, we recover a theorem of L\^e \cite{Le21} about the faithfulness of the natural action of $S(\Sigma)$ on $S(H).$ 
The precise definition of the embedding will be given in Definition \ref{def:embedding}; it is based on the study of coefficients of \textit{curve operators} on $S(H,\Q(A))$ where $H$ is a handlebody with boundary $\Sigma,$ in some basis given by trivalent colored graphs (see Lemma \ref{lemma:basis}), which are the skein module version of the basis of WRT-TQFTs given by Blanchet--Habegger--Masbaum and Vogel \cite{BHMV}.

 Morally speaking our embedding could be thought as the quantization of Fenchel--Nielsen coordinates associated to a pair of pants decomposition, while Bonahon--Wong's quantum trace map is based on a quantization of shear coordinates on the Teichmüller space. 

 Compared with Bonahon--Wong's result our embedding has the drawback of landing in a localized quantum torus instead of just a quantum torus, but in exchange we get several nice features. First, our result applies to closed surfaces as well as surfaces with boundary, whereas Bonahon--Wong's embedding needs punctures to be defined, since they have to start with ideal triangulations of the surface $\Sigma.$ Secondly, our embedding arises in a more natural way, by studying the action of $S(\Sigma)$ on the skein module $S(H)$ of a handlebody with boundary $\Sigma,$ in the graph basis of $S(H)$ associated to the pair of pants decomposition $\mathcal{P}.$ The proof that we get an embedding does not require to check difficult formulas, and will be a simple by-product of the fact that $S(H)$ is a module over $S(\Sigma).$ 
 
Finally, a nice feature of this new embedding is that it is in a way almost surjective: the localized quantum torus can be viewed as a finite degree extension of the image of $\sigma$. Indeed we see in Theorem \ref{thm:embeddingIntro} that the elements $Q_1,\ldots,Q_n,C_1,\ldots,C_b$ satisfy a degree $4$ polynomial equation with coefficients in $\sigma(S(\Sigma))$. The next proposition says that a similar property holds for the elements $E_1,\ldots,E_n$.

\begin{proposition}\label{prop:finite_ext}
There exists a finite index subgroup $\Lambda$ of $\Z^n$ such that all $k=(k_1,\ldots,k_n) \in \Lambda$ : $$E_1^{k_1}\cdots E_n^{k_n} = \sum_{j \in I} \sigma(\gamma_j) G_j $$ where $\{\gamma_j \, \, j \in I \}$ is a finite set of multicurves on $\Sigma$ and $\{G_j \, , \, j \in I \}$ are rational fractions in $Q_1,\ldots,Q_n,C_1,\ldots,C_b$.
\end{proposition}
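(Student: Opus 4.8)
The plan is to realize each monomial $E_1^{k_1}\cdots E_n^{k_n}$ as the image under $\sigma$ of a multicurve on $\Sigma$, up to multiplication by a rational fraction in the $Q_i$ and $C_j$. The natural candidate multicurves are those supported near the curves of the pants decomposition dual to the $\alpha\in\mathcal P$: recall that, by Theorem~\ref{thm:embeddingIntro} and the description of $\sigma$ via curve operators in the trivalent graph basis, the variable $E_i$ should be thought of as (a normalization of) the operator attached to a Dehn-twist-type or ``transverse'' curve $\beta_i$ crossing $\alpha_i$ once, while $Q_i$ records the color on $\alpha_i$. So first I would identify, for each $i$, a simple closed curve $\beta_i$ on $\Sigma$ intersecting $\mathcal P$ minimally such that $\sigma(\beta_i)$ is a Laurent polynomial in $E_i,Q_i,C_j$ whose top-degree term in $E_i$ is $E_i$ times a unit of $\mathcal A(\mathcal P)$ (a fraction in the $Q$'s and $C$'s), the lower-order terms being images of ``simpler'' multicurves. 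This is exactly the shape of the formula for a curve operator on $S(H,\Q(A))$ in the graph basis: acting by $\beta_i$ shifts the color on $\alpha_i$ by $\pm 2$ and multiplies by $6j$-type coefficients, which are the rational fractions $G_j$.

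The second step is a triangularity/induction argument. Order multicurves on $\Sigma$ by total geometric intersection number with $\mathcal P$ (or by the lexicographic $E$-degree of their image). Using the recursion coming from $\sigma(\beta_i)$ — and more generally from $\sigma$ of the curve obtained by $p$ parallel copies of $\beta_i$, which lands in degree $p$ in $E_i$ — I would show by induction on $E$-degree that every monomial $E_1^{k_1}\cdots E_n^{k_n}$ with $k\in\Lambda$ lies in the $\mathcal A(\mathcal P)$-submodule generated by $\{\sigma(\gamma)\}$ with coefficients that are fractions in $Q,C$ only. Here $\Lambda$ enters because a single copy of $\beta_i$ may not produce a \emph{pure} power of $E_i$: the intersection pattern of $\beta_i$ with the other pants curves, and the fact that $\sigma(\beta_i)$ is only well-defined after choosing how $\beta_i$ sits relative to $\mathcal P$, means one controls cleanly only those exponent vectors $k$ lying in some sublattice (for instance, twice the lattice, or the lattice on which the relevant $6j$-coefficients become genuinely invertible in $\mathcal A(\mathcal P)$ rather than just in a larger extension). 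Finiteness of the index of $\Lambda$ follows because the obstruction is controlled by finitely many congruence conditions mod $2$ (parity of colors) — so $\Lambda\supseteq (2\Z)^n$ in the worst case.

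The main obstacle I expect is pinning down the exact curve $\beta_i$ (and its multiples) whose skein image has the required leading term, together with verifying that the subleading coefficients are \emph{scalar} rational fractions in $Q,C$ and that the leading coefficient is actually \emph{invertible} in $\mathcal A(\mathcal P)$ — this is where the localization (invertibility of $A^kQ_i^2-A^{-k}Q_i^{-2}$) is used, since $6j$-symbols have exactly such quantum-integer denominators. A secondary subtlety is that $\beta_i$ may intersect several pants curves, so acting by it changes several colors at once; to get a monomial purely in $E_i$ one wants a curve meeting only $\alpha_i$, which exists precisely when $\alpha_i$ is non-separating in the appropriate sense — otherwise one must combine several $\beta$'s, and the lattice $\Lambda$ must be taken small enough to kill the cross-terms. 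I would set up the induction so that these cross-terms are always of strictly lower $E$-degree, so that they are absorbed by the inductive hypothesis; making that strict degree drop precise is the technical heart of the argument. Once the generation statement holds for a generating set of $\Lambda$, it holds for all of $\Lambda$ by multiplying relations and re-expanding products $\sigma(\gamma)\sigma(\gamma')$ via the skein algebra structure, which again only introduces multicurves and $Q,C$-fractions.
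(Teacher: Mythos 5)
Your high-level strategy is the right one and is essentially the paper's (Proposition \ref{prop:image}): realize $E$-monomials as leading terms of $\sigma(\gamma)$ for multicurves $\gamma$ with prescribed geometric intersections with $\mathcal{P}$, use the localization to invert the extremal coefficients (which are nonzero by Proposition \ref{prop:expand}-(2)), take $\Lambda$ to be the parity lattice, and induct on degree to absorb lower-order terms. But there is a genuine gap at what you yourself call ``the technical heart''. For a multicurve $\gamma$ with $i(\gamma,\alpha_j)=|k_j|$, the element $\sigma(\gamma)$ contains \emph{all} $2^d$ extremal monomials $E^{\mu|k|}$, $\mu\in\{\pm1\}^d$ (with $d$ the number of indices where $|k_j|\neq 0$), each with nonzero coefficient, and these all sit at the same level of any ordering built from geometric intersection numbers. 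So one curve gives one equation with $2^d$ top-level unknowns and your induction does not close: already for a curve meeting only $\alpha_i$, once, one has $\sigma(\beta_i)=E_i+E_i^{-1}F$, and $E_i^{-1}F$ is neither of strictly lower order in $|k|$ nor the image of a simpler multicurve. Ordering by signed degree instead does not rescue the argument, since signed lexicographic order on $\Z^n$ is not well-founded. The issue you flag (a curve crossing several pants curves) is a separate and more benign one.

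The missing idea is Lemma \ref{lemma:fracdehn}: applying positive fractional Dehn twists along the $\alpha_j$'s to $\gamma$ in all $2^d$ combinations $\varepsilon\in\{0,1\}^d$ produces curves $\gamma_\varepsilon$ whose extremal coefficients $F^{\gamma_\varepsilon}_{\mu|k|}$ differ from those of $\gamma$ by the explicit factors $\prod_j\bigl(-A^{2\mu_j+|k_j|}Q_j^{2\mu_j}\bigr)^{\varepsilon_j}$; the resulting $2^d\times 2^d$ matrix is a tensor product of invertible $2\times 2$ blocks, so one can solve for each $E^{\mu|k|}$ separately modulo strictly lower-order terms, and then conclude by induction on $(|k_1|,\ldots,|k_n|)$ in lexicographic order. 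Your closing step of proving the statement only for a generating set of $\Lambda$ and then ``multiplying relations'' is also shaky: in the quantum torus $G_j\,\sigma(\gamma')$ is not of the form $\sigma(\gamma')\cdot(\text{fraction})$, because conjugating a fraction in the $Q$'s past $E^l$ shifts it by an $l$-dependent substitution. The paper avoids this by running the induction directly for every $k\in\Lambda$.
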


An other important aspect of Bonahon--Wong work is the study of irreducible finite dimensional complex representations of $S_{\xi}(\Sigma) = S(\Sigma) \otimes_{A=\xi} \C$ when $\xi$ is root of unity. More precisely, when $\xi$ is root of unity of order twice an odd number, Bonahon--Wong associate in \cite{BW16} to any such representation $\rho$ of $S_{\xi}(\Sigma)$ a canonical point $r_{\rho} \in X(\Sigma,\slC)$ called the classical shadow of $\rho$. A natural and important question is to know if points in $X(\Sigma,\slC)$ completely classify irreducible finite dimensional representations of $S_{\xi}(\Sigma)$. In \cite{BW17}, Bonahon--Wong proved that if $\Sigma$ has at least one boundary component, any point in $X(\Sigma,\slC)$ satisfying a geometric condition are the classical shadow of an irreducible representation of $S_\xi(\Sigma)$. When $\Sigma$ has no boundary component they removed this geometric condition to prove that $\rho \mapsto r_{\rho}$ is sujective in \cite{BW19}. They asked the question which points in $X(\Sigma,\slC)$ have a single preimage by the map $r_{\rho};$ the \textit{unicity conjecture} that they formulated was that for a open dense subset of $X(\Sigma,\slC),$ the classical shadow determines the representation. In this paper we prove:

\begin{theorem}\label{thm:lift_rep}
Suppose that $\Sigma$ has at most one boundary component. Let $\xi$ be a $2p$-th primitive root of unity with $p \ge 3$ an odd number and $\rho : S(\Sigma) \to \mathrm{End}(V)$ be an irreducible representation with classical shadow $r$. Let $\mathcal{P}$ be a pants decomposition of $\Sigma$ in the same orbit, under the action of the mapping class group of $\Sigma$, as the one shown in Figure \ref{fig_sausage}. Suppose that for $\alpha_1,\alpha_2,\alpha_3 \in \mathcal{P}$ bounding a pair of pants : \begin{equation} \label{eqG1}
2+\sum_{k=1}^3 \mathrm{Tr}(r(\alpha_k^2))- \prod_{k=1}^3 \mathrm{Tr}(r(\alpha_k)) \neq 0
\end{equation} and for all $\alpha \in \mathcal{P}$ 
\begin{equation} \label{eqG2}
\mathrm{Tr}(r(\alpha)) \neq \pm 2
\end{equation}Then there is a representation $\tilde{\rho} : \mathcal{A}_{\xi}(\Gamma)^0 \to \mathrm{End}(V)$ such that $\tilde{\rho} \circ \sigma_{\xi} = \rho.$ 
\end{theorem}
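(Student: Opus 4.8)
The plan is to construct $\tilde{\rho}$ directly on the generators of $\mathcal{A}_\xi(\Gamma)^0$, using the explicit form of the curve operators on $S(H,\Q(A))$ in the trivalent graph basis of Lemma~\ref{lemma:basis} that underlies the embedding of Definition~\ref{def:embedding}, and checking that the hypotheses \eqref{eqG1} and \eqref{eqG2} are precisely the non-degeneracy needed at each step. Here $\sigma_\xi$ is the specialization of $\sigma$ at $A=\xi$ and $\mathcal{A}_\xi(\Gamma)^0$ is the reduction of the localized quantum torus of Theorem~\ref{thm:embeddingIntro} along the central character cut out by $r$; it is a finite-dimensional $\C$-algebra, since $\xi^{2p}=1$ with $p$ odd makes $Q_i^{\pm 2p},E_i^{\pm 2p},C_j^{\pm 2p}$ central and these are module-finite over $\sigma_\xi(Z(S_\xi(\Sigma)))$ by Theorem~\ref{thm:embeddingIntro} and Proposition~\ref{prop:finite_ext}. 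The first step is to pin down that central character. The Chebyshev identity gives $\sigma_\xi(T_p(\alpha))=-(Q^{2p}+Q^{-2p})$ for every $\alpha\in\mathcal P$, where $Q\in\{Q_1,\dots,Q_n,C_1,\dots,C_b\}$ is the variable attached to $\alpha$ by Theorem~\ref{thm:embeddingIntro}; since $\rho$ is irreducible, $\rho(T_p(\alpha))$ is the scalar $-\mathrm{Tr}(r(\alpha))\cdot\mathrm{Id}$, so $\tilde{\rho}(Q^{2p})$ is forced to be a root $c$ of $c+c^{-1}=\mathrm{Tr}(r(\alpha))$, two mutually inverse nonzero choices. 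Hypothesis \eqref{eqG2}, that $\mathrm{Tr}(r(\alpha))\neq\pm 2$, is exactly $c\neq\pm 1$: this makes $\rho(\alpha)$ semisimple with simple spectrum (as $T_p(x)+(c+c^{-1})$ then has distinct roots), and makes every $q$ with $q^{2p}=c$ solving $-(\xi^2q^2+\xi^{-2}q^{-2})=\lambda$ satisfy $q^4\notin\mu_p$, so the localization elements $\xi^kQ_i^2-\xi^{-k}Q_i^{-2}$ are sent to invertible operators. Proposition~\ref{prop:finite_ext} controls, and makes consistent, the values of the remaining central generators $E_i^{2p}$.

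Second, I would build the operators. Since the curves of $\mathcal P$ are disjoint the operators $\rho(\alpha)$, $\alpha\in\mathcal P$, commute and (by Step~1) are simultaneously diagonalizable; decompose $V=\bigoplus_\mu V_\mu$ into their joint eigenspaces. On $V_\mu$ set $\hat{Q}_i$ (resp.\ $\hat{C}_j$) to be the scalar $q$ with $q^{2p}$ the value fixed in Step~1 and $-(\xi^2q^2+\xi^{-2}q^{-2})=\rho(\alpha)|_{V_\mu}$ for the corresponding $\alpha\in\mathcal P$; for boundary-parallel curves this is automatic, $\rho$ sending them to scalars, and here $\Sigma$ has at most one boundary component so $b\le 1$. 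To define $\hat{E}_i$, I would use the Dehn--Thurston form of the curve operator $\sigma_\xi(\delta_i)$, where $\delta_i$ is a curve dual to $\alpha_i$ (meeting $\alpha_i$ minimally, disjoint from the other curves of $\mathcal P$): read off in the graph basis it has the shape $\sigma_\xi(\delta_i)=F_i^{+}E_i+F_i^{0}+F_i^{-}E_i^{-1}$ with $F_i^{\pm},F_i^{0}$ Laurent in the $Q$'s and $C$'s, where $F_i^{\pm}$ is, up to invertible factors, a product of recoupling coefficients ($6j$- or theta-symbols) at the two vertices of $\Gamma$ adjacent to the edge of $\alpha_i$. The crux is a direct computation identifying the specialization of these symbols at the classical shadow with the quantity $2+\sum_{k=1}^3\mathrm{Tr}(r(\alpha_k^2))-\prod_{k=1}^3\mathrm{Tr}(r(\alpha_k))$ of each adjacent pair of pants; using $\mathrm{Tr}(r(\alpha_k^2))=\mathrm{Tr}(r(\alpha_k))^2-2$ this equals $x_1^2+x_2^2+x_3^2-x_1x_2x_3-4$ with $x_k=\mathrm{Tr}(r(\alpha_k))$, whose non-vanishing is exactly the irreducibility of $r$ on that pair of pants, i.e.\ hypothesis \eqref{eqG1}. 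Granting this, $F_i^{\pm}(\hat{Q},\hat{C})$ is invertible, so there is a unique invertible operator $\hat{E}_i$ mapping $V_\mu$ to the eigenspace with the shifted $\alpha_i$-eigenvalue, normalized as in the graph basis and satisfying $\rho(\delta_i)=F_i^{+}(\hat{Q},\hat{C})\hat{E}_i+F_i^{0}(\hat{Q},\hat{C})+F_i^{-}(\hat{Q},\hat{C})\hat{E}_i^{-1}$; one then checks, from the way $E_i$ shifts the $i$-th edge colour, that $\hat{Q}_i\hat{E}_i=\xi\hat{E}_i\hat{Q}_i$ and that $\hat{E}_i$ commutes with $\hat{Q}_j,\hat{C}_k$ for $j\neq i$ and with $\hat{E}_j$.

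Finally, the assignments $Q_i\mapsto\hat{Q}_i$, $E_i\mapsto\hat{E}_i$, $C_j\mapsto\hat{C}_j$ respect all the defining relations of $\mathcal{A}_\xi(\Gamma)^0$ (the quantum-torus relations and invertibilities by the previous paragraph, the central character by Step~1), hence assemble into an algebra homomorphism $\tilde{\rho}:\mathcal{A}_\xi(\Gamma)^0\to\mathrm{End}(V)$. To conclude $\tilde{\rho}\circ\sigma_\xi=\rho$ it suffices to check equality on a generating set of $S_\xi(\Sigma)$ by simple closed curves: by construction $\tilde{\rho}(\sigma_\xi(\alpha))=\rho(\alpha)$ for $\alpha\in\mathcal P$ and $\tilde{\rho}(\sigma_\xi(\delta_i))=\rho(\delta_i)$, and for the decomposition of Figure~\ref{fig_sausage} (to which $\mathcal P$ is $\mathrm{MCG}$-equivalent) these curves, together with finitely many further explicit curves whose $\sigma_\xi$-images are again Laurent expressions in the $\hat{Q},\hat{E},\hat{C}$ for which the same comparison applies, generate $S_\xi(\Sigma)$ as an algebra. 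The hard part, I expect, is the computational heart of the second step: showing that the leading fusion coefficients of the curve operators $\sigma_\xi(\delta_i)$, once evaluated at the classical shadow, are governed exactly by the expression in \eqref{eqG1} (and the parallel fact that \eqref{eqG2} governs the $Q$-variables and keeps us inside the localization), together with organizing the per-eigenspace root extractions into a single globally consistent choice over all edges of $\Gamma$ at once.
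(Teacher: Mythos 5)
Your overall architecture does match the paper's: fix the scalars $Q_\alpha^{2p}$ from the classical shadow, define $\tilde{\rho}(Q_\alpha)$ on the eigenspaces of $\rho(\alpha)$ (with \eqref{eqG2} guaranteeing that the localized elements $U(A^kQ_\alpha^2)$ act invertibly), define the $E$-operators by inverting the curve-operator expansions of explicit dual curves (with \eqref{eqG1} guaranteeing, via the identity $\prod_{\delta_2,\delta_3}(\lambda_1\lambda_2^{\delta_2}\lambda_3^{\delta_3}-\lambda_1^{-1}\lambda_2^{-\delta_2}\lambda_3^{-\delta_3})=2+\sum_k\mathrm{Tr}(r(\alpha_k^2))-\prod_k\mathrm{Tr}(r(\alpha_k))$, that the fusion coefficients act invertibly — this is exactly Lemma \ref{invF}), and conclude on a generating set of simple closed curves. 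However, there are two genuine gaps. First, you propose to construct one operator $\hat E_i$ per edge from "a curve $\delta_i$ dual to $\alpha_i$, disjoint from the other curves of $\mathcal P$". For the sausage decomposition no such curve exists for most edges: the available dual curves $\beta_2,\dots,\beta_g$ are two-cycles meeting $\alpha_{a_i}$ and $\alpha_{b_i}$ once each, and the $\gamma_i$ meet the separating curve $\alpha_{c_i}$ twice, so their operators only involve the products $E_{a_i}E_{b_i}^{\pm1}$ and the squares $E_{c_i}^{\pm2}$. This is precisely why the target of the theorem is the subalgebra $\mathcal{A}_\xi(\Gamma)^0$, generated by $E_{a_0}$, $E_{a_i}E_{b_i}^{\pm1}$, $E_{c_i}^2$ and the corresponding $Q$-monomials (Lemma \ref{gen_t}); individual operators $\hat E_{a_i}$, $\hat E_{c_i}$ satisfying $\hat Q_i\hat E_i=\xi\hat E_i\hat Q_i$ need not exist on $V$, and your plan as written targets the wrong algebra.

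Second, and more seriously, you dismiss the verification of the relations — that each putative $\hat E$ and its putative inverse actually multiply to $\mathrm{Id}_V$, that the $\hat E$'s attached to adjacent edges commute, and more generally that the quantum-torus relations hold — as following "from the way $E_i$ shifts the $i$-th edge colour". That argument is only available for the concrete action on $S(H,\Q(A))$ in the graph basis; here $V$ is an abstract irreducible representation of $S_\xi(\Sigma)$, and the operators $\hat E$ are defined as polynomial expressions in $\rho$ of various curves and in the $\tilde{\rho}(Q)$'s. The relations must therefore be verified purely from skein-algebra identities among those curves, and this is where essentially all of the work in the paper's proof lies: Lemma \ref{1_cycle_rep}(b), Lemma \ref{2_cycle_rep}(c)(d), Lemma \ref{sep_rep}(b) (the identities $\hat E\hat E^{-1}=\mathrm{Id}$), and above all Lemmas \ref{com1cse} and \ref{com2cse} (commutation of the shift operators attached to adjacent one-cycles, two-cycles and separating-edge curves), each of which reduces to long explicit skein computations carried out in the appendix. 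You locate the "hard part" in the relation between the fusion coefficients and \eqref{eqG1}, which in the paper is the short Lemma \ref{invF}; the actual computational heart — without which the assignments do not assemble into a homomorphism — is missing from your proposal.
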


In this theorem $\mathcal{A}(\Gamma)^0$ is a $\Z[A^{\pm 1}]$-subalgebra of $\mathcal{A}(\mathcal{P})$ containing $\sigma(S(\Sigma))$. The definition of $\mathcal{A}(\Gamma)^0$ is given in Definition \ref{A0}, it is isomorphic to a localized quantum torus (see Lemma \ref{gen_t}). The notation $\mathcal{A}_{\xi}(\Gamma)^0$ stands for $\mathcal{A}(\Gamma)^0 \otimes_{A=\xi} \C$ and $\sigma_{\xi}$ is the induced map $S_{\xi}(\Sigma) \to \mathcal{A}_{\xi}(\Gamma)^0$. As representations of $\mathcal{A}_{\xi}(\Gamma)^0$ are well understood, we have the following corollary :

\begin{corollary} \label{cor:unicity}
Let $\rho_1$ and $\rho_2$ be two irreducible complex finite dimensional representations of $S_{\xi}(\Sigma)$ with same classical shadow $r$ satisfying the hypothesis of Theorem \ref{thm:lift_rep}. Moreover if $\Sigma$ has a boundary component, we suppose that $\rho_1$ and $\rho_2$ have the same scalar value on any simple closed curve parallel to the boundary. Then $\rho_1$ and $\rho_2$ are isomorphic with dimension $p^{3g-2}$ when $\Sigma$ has a boundary component and $p^{3g-3}$ when $\Sigma$ has no boundary (here $g$ is the genus of $\Sigma$).
\end{corollary}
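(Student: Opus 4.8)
The plan is to derive Corollary~\ref{cor:unicity} from Theorem~\ref{thm:lift_rep} together with the standard representation theory of quantum tori at roots of unity. First I would apply Theorem~\ref{thm:lift_rep} to each of $\rho_1$ and $\rho_2$, obtaining representations $\tilde{\rho}_i\colon \mathcal{A}_\xi(\Gamma)^0\to\mathrm{End}(V_i)$ with $\tilde{\rho}_i\circ\sigma_\xi=\rho_i$ for $i=1,2$. Each $\tilde{\rho}_i$ is automatically irreducible: since $V_i$ is finite dimensional and $\rho_i$ is irreducible, the Jacobson density theorem gives $\mathrm{End}(V_i)=\rho_i(S_\xi(\Sigma))=\tilde{\rho}_i(\sigma_\xi(S_\xi(\Sigma)))\subseteq\tilde{\rho}_i(\mathcal{A}_\xi(\Gamma)^0)$, so $\tilde{\rho}_i$ is surjective. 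Moreover any isomorphism of $\mathcal{A}_\xi(\Gamma)^0$-modules $V_1\cong V_2$ restricts along $\sigma_\xi$ to an isomorphism of $S_\xi(\Sigma)$-modules $\rho_1\cong\rho_2$; so it suffices to prove that $\tilde{\rho}_1\cong\tilde{\rho}_2$ and to compute their common dimension.

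Next I would invoke Lemma~\ref{gen_t}: since $\mathcal{A}(\Gamma)^0$ is a localized quantum torus, $\mathcal{A}_\xi(\Gamma)^0$ is an affine prime polynomial-identity algebra that is Azumaya over its center $Z_0$, the latter being a localization of a Laurent polynomial ring. Consequently every irreducible representation has dimension equal to the PI-degree of $\mathcal{A}_\xi(\Gamma)^0$, and two irreducible representations are isomorphic as soon as they have the same central character $Z_0\to\C$. The PI-degree is read off from the commutation matrix of $\mathcal{A}_\xi(\Gamma)^0$ given by Lemma~\ref{gen_t}: because $\xi$ is a primitive $2p$-th root of unity with $p$ odd, each Heisenberg pair contributes a factor $p$ --- it is the square $Q_i^2$, which commutes with $E_i$ through the primitive $p$-th root $\xi^2$, that plays the role of generator of $\mathcal{A}(\Gamma)^0$ --- while the generators attached to boundary-parallel curves are central; a count of the pairs yields $p^{3g-3}$ in the closed case and $p^{3g-2}$ when $\Sigma$ has a boundary component, as claimed.

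It remains to show that $\tilde{\rho}_1$ and $\tilde{\rho}_2$ have the same central character. The key point is that $Z_0$ is generated by the image $\sigma_\xi(Z_\xi)$ of the Chebyshev--Frobenius central subalgebra $Z_\xi\subseteq S_\xi(\Sigma)$, spanned by the threadings $T_p(\gamma)$ of simple closed curves $\gamma$, together with the images of the boundary-parallel curves, which lie in $Z_0$ because such curves are central in $S_\xi(\Sigma)$. On $\rho_i=\tilde{\rho}_i\circ\sigma_\xi$ the subalgebra $Z_\xi$ acts by evaluation at the classical shadow $r$ (this is, up to sign conventions, the definition of the classical shadow), and each boundary-parallel curve acts by a scalar which is, by hypothesis, the same for $\rho_1$ and $\rho_2$; hence the central characters of $\tilde{\rho}_1$ and $\tilde{\rho}_2$ agree on a generating set of $Z_0$ and therefore coincide, which finishes the argument. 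I expect the main obstacle to be exactly this last generation statement: one must show that the central monomials of the quantum torus (the $2p$-th powers of the $Q_i$, the $p$-th powers of the $E_i$, and the generators coming from boundary-parallel curves) are recovered \emph{without any residual sign or square-root ambiguity} from the elements $\sigma_\xi(T_p(\gamma))$ --- which a priori provide only symmetric combinations such as $Q_i^{2p}+Q_i^{-2p}$ --- together with the boundary data. This is precisely where the non-degeneracy conditions \eqref{eqG1} and \eqref{eqG2}, already needed in Theorem~\ref{thm:lift_rep}, must be brought to bear, ensuring that the natural map from $\mathrm{MaxSpec}(Z_0)$ to $X(\Sigma,\slC)$ decorated by the boundary values is injective at $r$.
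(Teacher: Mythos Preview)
Your overall architecture matches the paper's: lift each $\rho_i$ to $\tilde\rho_i$ on $\mathcal{A}_\xi(\Gamma)^0$, argue irreducibility (your Jacobson density argument is fine and implicitly needed), and then invoke the Bonahon--Liu classification of irreducibles of the quantum torus (Proposition~\ref{rep_Q_t}) to conclude $\tilde\rho_1\cong\tilde\rho_2$ and read off the dimension.

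The genuine gap is exactly where you locate it, but your proposed resolution is not the one that works. You want to show that the central characters of $\tilde\rho_1$ and $\tilde\rho_2$ coincide by arguing that $Z_0$ is generated by $\sigma_\xi(Z_\xi)$ together with boundary data. This generation statement is false on the nose: $\sigma_\xi(T_p(\alpha_e))$ yields only $-(Q_e^{2p}+Q_e^{-2p})$, not $Q_e^{2p}$, and similarly the $E$-type central elements are only determined up to the kind of ambiguity you describe. The conditions \eqref{eqG1}--\eqref{eqG2} do not remove these ambiguities; they only guarantee that the relevant denominators are invertible. In particular, the map from $\mathrm{MaxSpec}(Z_0)$ to classical shadows decorated by boundary values is genuinely finite-to-one over $r$, so two \emph{arbitrary} lifts of $\rho_1,\rho_2$ need not have the same central character.

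The paper sidesteps this by using more than the bare existence statement of Theorem~\ref{thm:lift_rep}. The lift constructed there depends on auxiliary choices: for each $\alpha\in\mathcal{P}$ one fixes $x_\alpha$ with $\mathrm{Tr}(r(\alpha))=x_\alpha^{2p}+x_\alpha^{-2p}$, and then $\tilde\rho(Q_\alpha)$, $\tilde\rho(E_e)$, $\tilde\rho(E_bE_c^{\pm 1})$, $\tilde\rho(E_c^{\pm 2})$ are defined by explicit formulas in $\rho$ and the $x_\alpha$. One builds $\tilde\rho_1$ and $\tilde\rho_2$ using the \emph{same} $x_\alpha$'s. Proposition~\ref{cshadow} then computes, for every generator $x\in\mathcal{X}\cup\mathcal{Y}$ of Lemma~\ref{gen_t}, the scalar $\tilde\rho_i(x^p)$ as an explicit expression in $r$ and the $x_\alpha$ alone (via Chebyshev identities applied to the formulas for $\sigma(\gamma)$). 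Since $r$ and the $x_\alpha$ are common to both constructions, these scalars agree, and Proposition~\ref{rep_Q_t} finishes. So the missing ingredient is not an abstract generation statement for $Z_0$, but rather the explicit control of the lift provided by Proposition~\ref{cshadow}.
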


Notice that the conditions satisfied by the classical shadow $r$ in Theorem \ref{thm:lift_rep} defined a Zariski open dense of $X(\Sigma,\slC)$. Therefore Corollary \ref{cor:unicity} recovers a theorem of Frohman, Kania-Bartoszynska and L\^e \cite{FKBL19} that showed that there is a Zariski open dense subset of $X(\Sigma,\slC)$ on which points have a single preimage. Our proof thus gives an alternative proof of the unicity conjecture of Bonahon and Wong. We note that the proof in \cite{FKBL19} used abstract arguments about Azumaya algebras and facts about the center of skein algebras at roots of unity, and produced an unexplicit Zariski open subset of $X(\Sigma,\slC)$ where the unicity conjecture holds.

An alternative (and more constructive) approach to the unicity conjecture had been initiated by Takenov \cite{Takenov}, in the case of where $\Sigma$ is the one-holed torus and the 4-holed sphere. It involved explicitely extending the representations of $S_{\xi}(\Sigma)$ to a quantum torus in which it embeds. The present paper will take advantage of the embedding defined by Theorem \ref{thm:embeddingIntro} to extend Takenov's strategy for compact oriented surfaces that are closed or have one boundary component. We show that a representation of classical shadow $r$ can be extended to a representation of a localized quantum torus and use an argument of Bonahon and Liu about the uniqueness of representations of quantum tori to deduce the unicity of the representation of $S_{\xi}(\Sigma)$ with classical shadow $r$. We note that this construction also allows one to reconstruct the representation from the classical shadow, recovering the surjectivity proved in \cite{BW19}, on the open dense subset of $X(\Sigma,\slC)$ described above.

The theorem of \cite{FKBL19} has since been improved by Ganev, Jordan and Safronov \cite{GJS19}: building upon Frohman--Kania-Bartoszynska--L\^e's result, they prove that the unicity conjecture actually holds over the set of all irreducible representations. In a forthcoming paper of the two authors with Thomas Le Fils, we prove:
\begin{theorem}\label{thm:compatible_pants}\cite{DLFS} Let $\Sigma$ be a closed surface. The set of conjugacy classes of representations $r:\pi_1(\Sigma) \to \slC$ satisfying the hypothesis of Theorem \ref{thm:lift_rep} is equal to the set of irreducible representations minus representations whose image is isomorphic to the quaternion group with $8$ elements.

\end{theorem}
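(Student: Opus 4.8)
The plan is to recast hypotheses \eqref{eqG1}--\eqref{eqG2} representation-theoretically and then prove the two inclusions separately. Since $\mathrm{Tr}(r(\alpha^2))=\mathrm{Tr}(r(\alpha))^2-2$ in $\slC$, the left-hand side of \eqref{eqG1} equals $t_1^2+t_2^2+t_3^2-t_1t_2t_3-4$ with $t_k=\mathrm{Tr}(r(\alpha_k))$, which by the Fricke trace identity is $\mathrm{Tr}(r([\mu,\nu]))-2$ for a free basis $\mu,\nu$ of $\pi_1(P)$, $P$ being the pair of pants bounded by $\alpha_1,\alpha_2,\alpha_3$ (so that $\alpha_1,\alpha_2,\alpha_3$ are freely homotopic to $\mu,\nu,(\mu\nu)^{-1}$). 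As $\mathrm{Tr}(r([\mu,\nu]))=2$ holds exactly when $r(\mu),r(\nu)$ have a common eigenvector, \eqref{eqG1} for $P$ says precisely that $r|_{\pi_1(P)}$ is irreducible, while \eqref{eqG2} says precisely that $r(\alpha)$ has distinct eigenvalues. Thus ``$r$ satisfies the hypotheses of Theorem~\ref{thm:lift_rep} for $\mathcal P$'' means: $r|_{\pi_1(P)}$ is irreducible for every pair of pants $P$ of $\mathcal P$ and $\mathrm{Tr}(r(\alpha))\ne\pm2$ for every $\alpha\in\mathcal P$; call such a $\mathcal P$ \emph{adapted to $r$}, and let $\mathcal G\subseteq X(\Sigma,\slC)$ be the set of classes admitting an adapted pants decomposition in the orbit of Figure~\ref{fig_sausage}. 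We must show $\mathcal G$ is the irreducible locus minus the $Q_8$-representations (those whose image is the quaternion group $Q_8$ of order $8$).

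The inclusion $\subseteq$ is easy. A reducible representation restricts reducibly to every subgroup, so if $r$ admits an adapted $\mathcal P$ then $r$ is irreducible. And if $\mathrm{Im}(r)\cong Q_8$: the commutator subgroup of $Q_8$ is its center $\{\pm I\}$, and the reference decomposition $\mathcal P_0$ of Figure~\ref{fig_sausage} contains a curve $\gamma$ bounding a one-holed-torus subsurface $T\subseteq\Sigma$ with $\pi_1(T)=\langle a,b\rangle$; hence $r(\gamma)=[r(a),r(b)]\in\{\pm I\}$, so $\mathrm{Tr}(r(\gamma))=\pm2$ and \eqref{eqG2} fails on $\gamma\in\mathcal P_0$. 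This persists for every translate $\phi(\mathcal P_0)$, because $\mathrm{Im}(r\circ\phi)=\mathrm{Im}(r)\cong Q_8$ and $\phi(\mathcal P_0)$ is adapted to $r$ iff $\mathcal P_0$ is adapted to $r\circ\phi$; so no adapted $\mathcal P$ in the orbit exists.

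For the reverse inclusion, the set $U_0$ of classes for which $\mathcal P_0$ itself is adapted is Zariski open in $X(\Sigma,\slC)$ and nonempty: it contains the discrete faithful representations, all of whose simple closed curve traces exceed $2$ and all of whose pants restrictions are non-elementary, hence irreducible. Since the irreducible locus of $X(\Sigma,\slC)$ is irreducible, $U_0$ meets it in a dense Zariski open set, so $\mathcal G$ contains the mapping-class orbit of that set, and the complement $B$ of $\mathcal G$ within the irreducible locus is a proper, Zariski closed, mapping-class-invariant set; it remains to see $B$ consists only of $Q_8$-representations. If $\mathrm{Im}(r)$ is Zariski dense, pick a non-separating curve $\gamma$ with $\mathrm{Tr}(r(\gamma))\ne\pm2$ (one exists, else $r$ would kill every non-separating curve and be reducible); for each curve or pair of pants of $\mathcal P_0$, the relevant trace, resp.\ left-hand side of \eqref{eqG1}, evaluated along $T_\gamma^{\,n}(\mathcal P_0)$ is, as a function of $n$, of the form $c_1\lambda^n+c_2\lambda^{-n}$ (or a fixed polynomial therein) with $\lambda\ne\pm1$ the eigenvalue of $r(\gamma)$, hence equal to $\pm2$, resp.\ zero, for only finitely many $n$ unless the configuration is resonant --- which Zariski density forbids --- so a sufficiently high power of $T_\gamma$ makes $\mathcal P_0$ adapted and $[r]\notin B$. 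If $\mathrm{Im}(r)$ is not Zariski dense then, being irreducible, it lies up to conjugacy in the normalizer $N(T)$ of a maximal torus (and is an infinite binary-dihedral group, a binary polyhedral group, or $Q_{4n}$); one then shows that $\mathcal P_0$ can be moved by a mapping class so that the $N(T)/T$-class in $H^1(\Sigma;\Z/2)$ is non-trivial on exactly two of the three boundary curves of every pair of pants --- making every pants restriction non-abelian, hence irreducible --- while a further small twist keeps every curve off the trace-$\pm2$ locus; this works for $Q_{4n}$ with $n\ge3$, for the binary polyhedral groups, and for the infinite $N(T)$-subgroups, but fails for $Q_8$ precisely because there the relation ``commutator subgroup $=$ center'' forces the one-holed-torus curve of every translate of $\mathcal P_0$ to have trace $\pm2$. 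The main obstacle is exactly this last case analysis: proving that $Q_8$ is the \emph{only} irreducible subgroup of $\slC$ for which no mapping-class translate of $\mathcal P_0$ is adapted, which demands a careful study of how the mapping class group can reposition $\mathcal P_0$ relative to the essentially rigid data of a representation with small image, together with the per-pants trace computations sketched above.
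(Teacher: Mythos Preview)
This theorem is not proved in the present paper at all: it is stated with the citation \cite{DLFS} and introduced as a result ``in a forthcoming paper of the two authors with Thomas Le Fils.'' So there is no proof here to compare against; I can only evaluate your argument on its own merits.

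Your reformulation of the hypotheses is correct and useful: the left side of \eqref{eqG1} is indeed $t_1^2+t_2^2+t_3^2-t_1t_2t_3-4=\Tr(r([\mu,\nu]))-2$, so \eqref{eqG1} on a pair of pants is exactly irreducibility of the restriction, and \eqref{eqG2} is exactly ``$r(\alpha)$ is not $\pm$-unipotent.'' Your easy inclusion is also fine: reducible representations fail \eqref{eqG1} everywhere, and for a $Q_8$-representation the one-holed-torus boundary curve in the sausage decomposition (the curve dual to the loop edge $a_0$) is a commutator, hence maps into $[Q_8,Q_8]=\{\pm I\}$, violating \eqref{eqG2}; this persists under the mapping class group since the property depends only on the image.

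The reverse inclusion, however, is not proved. Your Zariski-dense case is a heuristic, not an argument: a single Dehn twist $T_\gamma$ does not move every curve of $\mathcal P_0$ (any curve disjoint from $\gamma$ is fixed), so the ``$c_1\lambda^n+c_2\lambda^{-n}$'' growth only applies to some of the conditions, and you have not explained how to simultaneously repair all $3g-3$ trace conditions and all $2g-2$ pants-irreducibility conditions. For the non-Zariski-dense case you have written a plan rather than a proof: the assertion that one can arrange the $N(T)/T$-class in $H^1(\Sigma;\Z/2)$ to be nontrivial on exactly two boundaries of every pair of pants, \emph{and} simultaneously avoid trace $\pm 2$ on every curve, is precisely the content of the theorem for these groups and requires real work (and the case split among infinite binary dihedral, binary polyhedral, and $Q_{4n}$ with $n\ge 3$ each needs its own argument). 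You acknowledge this yourself in your last sentence. So as written, what you have is a correct reduction and a correct proof of one inclusion, together with a plausible outline---but not a proof---of the other.
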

This theorem together with Theorem \ref{thm:lift_rep} almost recovers the result of \cite{GJS19}. The fact that we have to exclude the representations with quaternionic image is due to the specific type of pair of pants decomposition we use in Theorem \ref{thm:lift_rep}. Indeed a representation with quaternionic image has trace $\pm 2$ on any separating closed curve on $\Sigma.$ We believe that the same arguments starting with a pair of pants decomposition using only non-separating curves would cover also the case of quaternionic irreducible representations. 

We end this introduction with the following question:
\begin{question} \label{question:integrality} Can one define an embedding similar to that of Theorem \ref{thm:embeddingIntro}, but with values in a quantum torus instead of a localized quantum torus ?
\end{question}
A possible approach towards this question would be to use basis for the skein module modeled on the integral basis of $\mathrm{SO}(3)$-TQFTs given by Gilmer and Masbaum \cite{GM07}. Note that it is however unlikely that one could both get an integral version of the embedding of Theorem \ref{thm:embeddingIntro}, while keeping the nice "finite extension" feature described in Proposition \ref{prop:finite_ext}. This last property is key for lifting representations as in Theorem \ref{thm:lift_rep}.

The paper is organized in two largely independent parts. Section \ref{sec:embedding} is devoted to the definition of the map $\sigma$ of Theorem \ref{thm:embeddingIntro}, and of the proof that it is an embedding. The short Section \ref{sec:sausage_case} introduces a special kind of pair of pants decompositions that are used in the next section and serves as a transition between the two parts of the paper. Section \ref{sec:representations} is devoted to the proof of Theorem \ref{thm:lift_rep} and Corollary \ref{cor:unicity}, and could in principle be read independently of Section \ref{sec:embedding}, although some formulas are deeply inspired by it. 

\textbf{Acknowledgements:} Over the course of this work, the first author was  partially supported by the project “AlMaRe” (ANR-19-CE40-0001-01) and by the
EIPHI Graduate School (ANR-17-EURE-0002). The authors thank Thang L\^e, Thomas Le Fils, Julien March\'e and Maxime Wolff for helpful conversations.

\section{Embeddings of skein algebras of surfaces into localized quantum tori}
\label{sec:embedding}
\subsection{Localized quantum tori}
\label{sec:qTorus}
Consider the $\Z[A^{\pm 1}]$-module 
$$\tilde{\mathcal{A}}_{n,b} = \Z[A^{ \pm 1}][E_1^{\pm 1},\ldots,E_{n}^{\pm 1}] \underset{\Z[A^{ \pm 1}]}{\otimes} \Q(A)(Q_1,\ldots,Q_{n},C_{1}, \ldots,C_{b})  $$
For convenience, we write an element of $\tilde{\mathcal{A}}_{n,b}$ as 
$$ \sum_{k \in \Z^n} \,  E_1^{k_1}\cdots E_n^{k_n} R_k $$ where $R_k \in  \Z[A^{\pm 1}](Q_1,\ldots,Q_{n},C_{1}, \ldots,C_{b}), \, \forall k \in \Z^n $ and all but finitely many $R_k$' are zero. Also $E^k$ denotes $E_1^{k_1}\cdots E_{n}^{k_{n}}$.

For $P\in \Q(A)(Q_1,\ldots,Q_n,C_{1}, \ldots,C_{b})$ and $k \in \Z^n$
 we define $$\hat{P}^{(k)} = P(A^{k_1}Q_1,\ldots,A^{k_n}Q_n,C_1,\ldots,C_b).$$  
Now we set the following multiplication on $\tilde{\mathcal{A}}_{n,b}$ 
$$\Big(\sum_{k \in \Z^n} \,  E^k R_k \Big) \Big(\sum_{l \in \Z^n} \,  E^l S_l \Big) = \sum_{k,l \in \Z^n} \,  E^{k+l} \hat{R}_k^{(l)} S_l $$ This multiplication makes $\tilde{\mathcal{A}}_{n,b}$ a non-commutative algebra over the ring $\Z[A^{\pm 1}]$. 

Let $\mathcal{R}$ be the subring  of $\Q(A)(Q_1,\ldots ,Q_n,C_1,\ldots ,C_b)$ consisting of all elements of the form $\dfrac{U}{V}$ where $U \in \Z[A^{\pm 1}][Q_1^{\pm 1},\ldots Q_n^{\pm 1},C_1^{\pm 1},\ldots,C_b^{\pm 1}]$ and $V$ is a finite product (possibily empty) of elements of the form $A^m Q_j^2-A^{-m} Q_j^{-2}$ for $m \in \Z$ and $1 \le j \le n$.

Let $\mathcal{A}_{n,b}$ be the sub-module of $\tilde{\mathcal{A}}_{n,b}$ generated by elements of the form $E^k R$ where $k \in \Z^n$ and $R \in \mathcal{R}$. It is clear from the multiplicative structure of $\tilde{\mathcal{A}}_{n,b}$ that $\mathcal{A}_{n,b}$ is a $\Z[A^{\pm 1 } ]$-subalgebra of $\tilde{\mathcal{A}}_{n,b}$. The non-commutative algebra $\mathcal{A}_{n,b}$ will be called the \textbf{localized quantum torus}.

\subsection{An embedding of skein algebras through curve operators}
\label{sec:embeddingCurveOp}
Let $\Gamma \subset \mathbb{S}^3$ be a planar banded uni-trivalent graph, we denote by $\mathcal{E}$ its set of edges. Let $\mathcal{U} \subset \mathcal{E}$ be the set of univalent edges of $\mathcal{E}$ and $\mathcal{E}' = \mathcal{E} \backslash \mathcal{U}$. Let $n$ be the total number of edges of $\Gamma$ joining two trivalent vertices and $b$ be the the number of univalent edges of $\Gamma$. We number edges in $\mathcal{E}'$ from $1$ to $n$ and edges in $\mathcal{U}$ from $n+1$ to $n+b$. We will also write $\mathcal{A}(\Gamma)$ for the localized quantum torus $\mathcal{A}_{n,b}$.

Finally, let $P$ be the set of triples $(e,f,g)\in \mathcal{E}^3$ such that the corresponding edges are adjacent to the same trivalent vertex.

Let $H \subset \mathbb{S}^3$ be a tubular neighborhood of $\Gamma$. We denote by $\hat{\Sigma}$ the boundary of $H$, we suppose that the genus of $\hat{\Sigma}$ is at least $1$. The univalent vertices of $\Gamma$ define banded points on $\hat{\Sigma}$ : $x_1,\ldots,x_b$. Let $\Sigma$ be the surface obtained from $\hat{\Sigma}$ by removing small open disks around each $x_j$. We suppose that $\Sigma$ has negative Euler characteristic.

For $c_1,\ldots,c_b$ a coloring of the banded points $x_1,\ldots,x_b,$ the relative skein module $S(H,\Q(A),c)$ will be the module generated by tangles with $c_i$ boundary points on the banded point $x_i$ and with the Jones-Wenzl idempotent $f_{c_i}$ inserted, modulo the Kauffman relations. Moreover the relative skein module $S(H,\Q(A))$ will be the direct sum of all $S(H,\Q(A),c)$ over all possible colorings $c$ of the banded points $x_1,\ldots,x_b.$
We denote by $S(\Sigma)$ the skein algebra of $\Sigma$ over $\Z[A^{\pm 1}]$. The algebra $S(\Sigma)$ acts on $S(H,\Q(A))$ by the stacking operation. For $\gamma$ a multicurve, we denote by $T^{\gamma}$ the action of $\gamma$ on $S(H,\Q(A))$.

A map $c: \mathcal{E} \to \N$ is called an admissible coloring if $\forall (e,f,g) \in P$, we have triangular inequalities $c(e)\leq c(f)+c(g)$ and $c(e)+c(f)+c(g)$ is even. We also introduce a lattice $\Lambda \subset \Z^{\mathcal{E}}$ by
$$\Lambda=\lbrace k\in \Z^{\mathcal{E}} \ | \ \forall (e,f,g)\in P, \ k(e)+k(f)+k(g)\in 2\Z   \rbrace.$$
Given an admissible coloring $c : \mathcal{E} \to \N$ of $\Gamma$, we denote by $\varphi_c \in S(H,\Q(A))$ the vector obtained by cabling each edge $e$ of $\Gamma$ using the Jones-Wenzl indempotents $f_{c(e)}.$ It is well known that 

\begin{lemma}\label{lemma:basis}
$\{ \varphi_c \, | \, c : \mathcal{E} \to \N \, \, \text{admissible} \}$ is a basis of $S(H,\Q(A))$.
\end{lemma}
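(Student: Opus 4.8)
The plan is to prove spanning and linear independence separately; since we work over the field $\Q(A)$, "basis" means $\Q(A)$-vector space basis, so no torsion phenomena inside $S(H,\Q(A))$ itself interfere. Both halves are classical facts about skein modules of handlebodies and their trivalent graph bases, so I will only sketch the arguments (compare Blanchet--Habegger--Masbaum--Vogel \cite{BHMV}).

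\emph{Spanning.} Any framed tangle in $H$ with the prescribed colored endpoints on $x_1,\dots,x_b$ can be isotoped into an arbitrarily thin regular neighborhood of the spine $\Gamma$, hence presented as a tangle carried by $\Gamma$: finitely many parallel strands running along each edge, with the idempotents $f_{c_i}$ already sitting at the univalent points. All Jones--Wenzl idempotents $f_m$ exist over $\Q(A)$ since the $[m]!$ are invertible there, so after resolving crossings with the Kauffman relations and expanding the strands along each edge in the Temperley--Lieb idempotent basis, one may assume each edge is cabled by a single $f_m$. At each trivalent vertex the three adjacent idempotents are combined by the standard fusion (recoupling) move, which rewrites the local picture as a $\Q(A)$-linear combination of trivalent vertices labelled by triples satisfying the triangle inequalities and the parity condition, a label violating them contributing $0$. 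Carrying this out at every vertex expresses the original tangle as a $\Q(A)$-linear combination of the $\varphi_c$ with $c$ admissible, so these vectors span $S(H,\Q(A))$.

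\emph{Linear independence.} I will use a bilinear pairing on $S(H,\Q(A))$ under which the $\varphi_c$ are mutually orthogonal with nonzero self-pairings. Let $\bar H$ be $H$ with reversed orientation; gluing $H$ to $\bar H$ along $\hat\Sigma$ by the identity gives the closed $3$-manifold $N=\#^{g}(S^1\times S^2)$, with $g$ the genus of $\hat\Sigma$, and pairs the banded points of the two copies. Over $\Q(A)$ the skein module $S(N,\Q(A))$ is one-dimensional, spanned by the empty skein (the torsion summands of $S(\#^{g}(S^1\times S^2))$, of the form $\Z[A^{\pm 1}]/(1-A^{2k})$, vanish after inverting these polynomials), so gluing followed by "coefficient of the empty skein" defines a bilinear form $\langle\,\cdot\,,\,\cdot\,\rangle\colon S(H,\Q(A))\times S(\bar H,\Q(A))\to\Q(A)$, and we identify $S(\bar H,\Q(A))\cong S(H,\Q(A))$ by the mirror. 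Evaluating $\langle\varphi_c,\varphi_{c'}\rangle$ is the classical recoupling computation: gluing $\varphi_c$ to $\varphi_{c'}$ yields a closed banded trivalent graph in $N$, and fusing the two parallel cablings running along each edge $e$, together with the orthogonality of distinct Jones--Wenzl idempotents and the vanishing over $\Q(A)$ of a curve $S^1\times\{pt\}$ colored by $f_n$ with $n\ge 1$ inside an $S^1\times S^2$ summand, forces $\langle\varphi_c,\varphi_{c'}\rangle=0$ whenever $c\ne c'$; when $c=c'$, further recoupling moves reduce the evaluation to an explicit product over the trivalent vertices of theta-symbols divided by the loop values of the edges, which is nonzero in $\Q(A)$ since every quantum integer and theta-symbol occurring is nonzero there. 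Hence the Gram matrix of $\langle\,\cdot\,,\,\cdot\,\rangle$ on the span of $\{\varphi_c\}$ is diagonal with invertible diagonal entries, so the $\varphi_c$ are linearly independent; with the spanning statement they form a basis of $S(H,\Q(A))$.

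\emph{Main obstacle.} The delicate part is linear independence. Two points need care: that the gluing pairing is well-defined and $\Q(A)$-valued, which rests on the one-dimensionality of $S(\#^{g}(S^1\times S^2))$ over $\Q(A)$, i.e.\ the vanishing of its torsion; and that the doubled colored graph evaluates, for every admissible $c$, to a nonzero rational function of $A$. Once these are in hand the diagonalization of the Gram matrix — and hence the lemma — is immediate, while the spanning half is routine fusion calculus.
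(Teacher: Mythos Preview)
Your proof is correct, but it takes a genuinely different route from the paper's. The paper identifies $H$ with the thickened graph $\Gamma\times[0,1]$ and uses the standard multicurve (tangle) basis $\psi_c$ of the skein module of a thickened surface, indexed by intersection numbers with the co-cores of the edges. The recursive formula for Jones--Wenzl idempotents shows that $\varphi_c$ is an upper-triangular combination of the $\psi_d$ (with $d_e\le c_e$ on every internal edge) having nonzero diagonal entry, and this triangularity immediately yields both linear independence and spanning.

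By contrast, you prove linear independence via the BHMV-style pairing obtained by doubling $H$ to $\#^g(S^1\times S^2)$, invoking Hoste--Przytycki's computation that $S(\#^g(S^1\times S^2),\Q(A))$ is one-dimensional, and then evaluating the doubled colored graph. This is correct and conceptually appealing (it is the argument one would use in the TQFT setting), but it imports more machinery: the structure of the skein module of $\#^g(S^1\times S^2)$, and a somewhat delicate recoupling computation inside that manifold to see orthogonality and nonvanishing of the diagonal. The paper's triangular argument is more self-contained and avoids these external inputs entirely; your approach, on the other hand, exhibits the nondegenerate pairing explicitly, which is useful if one later wants to compare with TQFT vector spaces.
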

\begin{proof}
The handlebody $H$ with banded points $x_1,\ldots,x_b$ is homeomorphic to the thickened surface $\Gamma \times [0,1],$ with banded points corresponding the univalent vertices. Fix a coloring $\hat{c}$ of the banded points. For thickened surfaces, the skein module is generated by disjoint unions of arcs and non-trivial simple closed curves with boundary $\hat{c}_i$ points on the $i$-th banded point and the $\hat{c}_i$-th Jones-Wenzl idempotent inserted at that banded point. Such tangles are completely determined by their intersection number with the co-core of each internal edge of $\Gamma.$ Let $\psi_c$ be the basis element which has $c_e$ intersections with the co-core of the edge $e.$ Now consider the vectors $\varphi_c$ corresponding to  admissible colorings of $\Gamma$ that coincides with $\hat{c_i}$ on the boundary. The recursive formula for the Jones-Wenzl idempotents shows that $\varphi_c$ is a linear combination of the vectors $\psi_d$ with with $d_e\leq c_e$ for all $e \in \mathcal{E'}.$ Moreover, $\varphi_c$ has nonzero coefficient along $\psi_c.$ This implies that the $\varphi_c$ are linearly independent and moreover an easy induction shows that any $\psi_c$ is a linear combination of the $\varphi_c,$ therefore the $\varphi_c$ are also a basis of $S(\Sigma,\Q(A)).$  
\end{proof}
The action of curve operators $T^{\gamma}$ in the basis $\varphi_c$ can be computed using the so-called fusion rules derived in \cite{MV94}. A complete set of fusion rules is described in Figure \ref{fig:fusion}, where coefficients are expressed in terms of $A$ and quantum integers $\lbrace n \rbrace=A^{2n}-A^{-2n}.$ 
\begin{figure}[!h]
\centering
\def \svgwidth{.6\columnwidth}
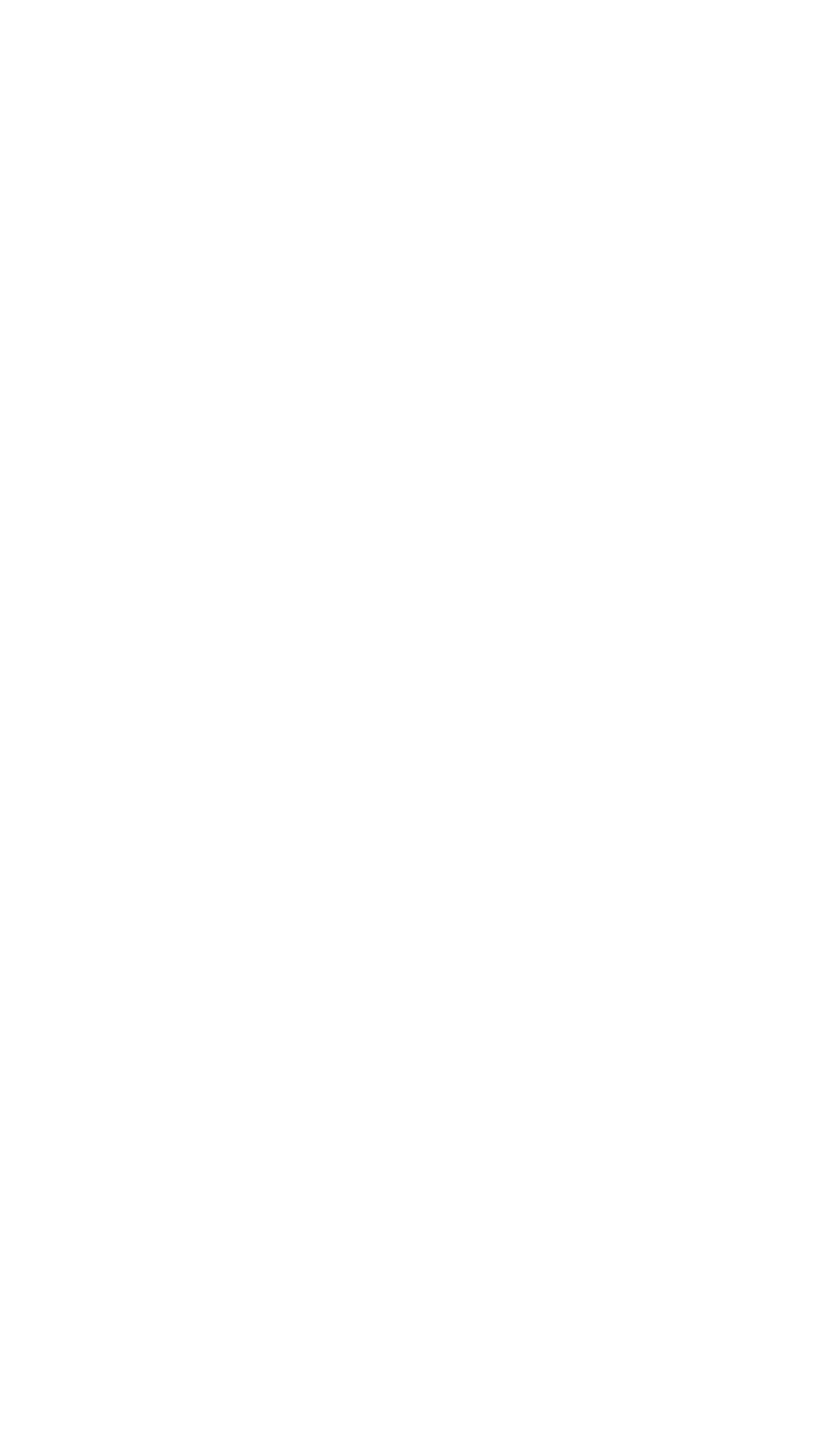
\caption{Fusion rules for computing curve operators  in the basis $\varphi_c.$ Thick edges represent edges of the trivalent graph $\Gamma,$ which are colored by integers, while dashed arcs are colored by 1. We let $\lbrace n \rbrace=A^{2n}-A^{-2n}.$}
\label{fig:fusion}
\end{figure}

Before studying the form of curve operators in the basis $\varphi_c,$ we will need the following lemma and definition:

\begin{lemma}\label{lemma:largeset}
Let $\Delta$ be the set of admissible colorings of $\Gamma$, then for all $v_1, \ldots,v_n \in \Lambda$ we have 
$$\bigcap_{j=1}^n \, \Delta + v_j \, \neq \emptyset.$$

\end{lemma}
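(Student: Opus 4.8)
The plan is to write down an explicit element of the intersection. First I would unwind what $c \in \Delta + v_j$ means: since $v_j \in \Z^{\mathcal{E}}$, it says $c - v_j \in \Delta$, i.e. $c - v_j$ is a map $\mathcal{E} \to \N$ satisfying, at every trivalent vertex with edges $e,f,g$, the triangle inequality $c(e) - v_j(e) \le (c(f) - v_j(f)) + (c(g) - v_j(g))$ and the evenness of $c(e) + c(f) + c(g) - \big(v_j(e)+v_j(f)+v_j(g)\big)$. Because $v_j \in \Lambda$, the quantity $v_j(e) + v_j(f) + v_j(g)$ is even for every triple of $P$, so the parity condition on $c - v_j$ is equivalent to the corresponding parity condition on $c$ itself; hence it suffices to find a single $c \colon \mathcal{E} \to \Z$ with $c(e)+c(f)+c(g)$ even on every triple of $P$, such that for each $j$ the translate $c - v_j$ is entrywise nonnegative and obeys the triangle inequalities at every trivalent vertex.

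Next I would take the constant (``diagonal'') coloring. Let $N = \max_{1 \le j \le n}\max_{e\in\mathcal{E}} |v_j(e)|$, fix an even integer $L \ge 3N$, and set $c(e) = L$ for all $e \in \mathcal{E}$. The vertex sums of $c$ equal $3L$, which is even, so $c$ --- and therefore every $c - v_j$ --- meets all the parity requirements. For nonnegativity, $(c - v_j)(e) = L - v_j(e) \ge L - N \ge 0$. For a triangle inequality at a trivalent vertex with edges $e,f,g$,
\[
 (c-v_j)(f) + (c-v_j)(g) - (c-v_j)(e) = L - v_j(f) - v_j(g) + v_j(e) \ge L - 3N \ge 0 ,
\]
and symmetrically for the two other cyclic permutations; this also covers trivalent vertices incident to a loop edge, where a triple of $P$ may repeat an edge, since all entries of $c$ are equal. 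Hence $c - v_j \in \Delta$ for every $j$, i.e. $c \in \bigcap_{j=1}^n (\Delta + v_j)$, which is therefore nonempty. (If $N = 0$ then all $v_j$ vanish and the claim reduces to $\Delta \neq \emptyset$, witnessed by the zero coloring.)

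The only point that needs care is the interaction with the lattice $\Lambda$: one should check that the defining evenness condition of $\Lambda$ is precisely what makes the parity half of admissibility invariant under translation, so that the parity bookkeeping can be done once and for all on $c$ rather than separately for each $c - v_j$; after that the numerical estimates are immediate and I do not expect any genuine obstacle. Note also that the argument uses the finiteness of the index set $\{1,\dots,n\}$ only through the single bound $N$, so the same proof applies verbatim to any finite family of elements of $\Lambda$.
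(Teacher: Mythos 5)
Your proof is correct and follows essentially the same approach as the paper's: both exhibit a constant coloring with large even entries as a point lying in every translate $\Delta+v_j$. The only difference is that the paper records the slightly stronger fact that the intersection contains $B((2k,\ldots,2k),r)\cap\Lambda$ for arbitrarily large $r$ (which is what is actually invoked later, in the proof of Lemma \ref{lemma:uniqueCoeff}); your argument yields this as well by taking $L\ge 3N+3r$.
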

We will call any subset of $\N^{\mathcal{E}}$ containing a subset of the form $\bigcap_{j=1}^n \, \Delta + v_j$ where $v_j\in \Lambda,$ a \textit{large subset} of $\N^{\mathcal{E}}.$
\begin{proof}
For any $w\in \Z^{\mathcal{E}}$ and $r\in \N$ let us write $B(w,r)$ be the ball around $w$ for $||\cdot||_{\infty}.$ Notice that for any $k \in \N,$ the set $\Delta$ contains $B((2k,\ldots,2k), 2\lfloor \frac{k}{3}\rfloor)\cap \Lambda.$ Let $r\geq 1,$ then if $2\lfloor \frac{k}{3} \rfloor>\mathrm{max}(||v_1||_{\infty},\ldots,||v_n||_{\infty})+r,$ then  $\bigcap_{j=1}^n \, \Delta + v_j$ contains $B((2k,\ldots,2k), r)\cap \Lambda.$ As $r$ can be chosen arbitrary large, this shows $\bigcap_{j=1}^n \, \Delta+v_j\neq \emptyset.$
\end{proof}

We can now describe the structure of curve operators $T^{\gamma}$ associated to multicurves on $\Sigma:$
\begin{proposition} \label{prop:expand}
Let $\gamma$ be a multicurve on $\Sigma$. There exists a large subset $V_{\gamma} \subset \N^{\mathcal{E}}$ and $F_k^{\gamma} \in \mathcal{R}$ (for $k : \mathcal{E}' \to \Z$) such that for all $c \in  V_{\gamma}$ 

$$ T^{\gamma} \varphi_c = \sum_{k : \mathcal{E}' \to \Z} F_k^{\gamma}(A^{c(1)},\ldots,A^{c(n)},A^{c(n+1)},\ldots,A^{c(n+b)}) \varphi_{c+k}$$ 
Moreover if $\mathcal{P} = \{\alpha_1,\ldots,\alpha_n\}$ is a pants decomposition of $\Sigma$ dual to $\Gamma$ (with numbering corresponding to the one of $\mathcal{E}$) then 
\begin{enumerate}
\item $F_k^{\gamma} = 0$ when $|k(j)| > i(\gamma,\alpha_j)$ or $k(j)\not\equiv i(\gamma,\alpha_j) \ (\mathrm{mod} \ 2)$ for some $1 \le j \le n$.
\item If $k = (\pm i(\gamma,\alpha_1),\ldots,\pm i(\gamma,\alpha_{n}))$ then $F_k^{\gamma} \neq 0$.
\end{enumerate}
\end{proposition}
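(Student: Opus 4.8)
The plan is to compute $T^\gamma\varphi_c$ explicitly by isotoping $\gamma$ onto the Jones--Wenzl--coloured spine $\Gamma$ and resolving it with the fusion rules of Figure \ref{fig:fusion}; all three assertions are then read off from the combinatorics of the resolution. First I would reduce to the case where $\gamma$ is a single simple closed curve: if $\gamma=\gamma_1\sqcup\gamma_2$ then $\gamma_1\cdot\gamma_2=\gamma$ in $S(\Sigma)$ (there are no crossings to resolve), so $T^\gamma=T^{\gamma_1}T^{\gamma_2}$, while $i(\gamma,\alpha_j)=i(\gamma_1,\alpha_j)+i(\gamma_2,\alpha_j)$ for every $j$ since $\mathcal{P}$ and all components of $\gamma$ may be put into minimal position simultaneously. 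A composition of two operators of the asserted shape again has the asserted shape on a (possibly smaller) large subset: the intersection bounds add, the coefficients compose by the multiplication rule of $\tilde{\mathcal{A}}_{n,b}$ and hence stay in $\mathcal{R}$, and the extreme shift $k=(\pm i(\gamma,\alpha_1),\dots,\pm i(\gamma,\alpha_n))$ of the product is attained \emph{only} by the pair of extreme shifts of the two factors, so its coefficient is a (shifted) product of the two nonzero extreme coefficients of $T^{\gamma_1}$ and $T^{\gamma_2}$, hence nonzero in the domain $\Q(A)(Q_1,\dots,C_b)$.

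Now assume $\gamma$ is connected, and isotope it in $H$ into a regular neighbourhood of $\Gamma$ so that it meets the co-core disk $D_j$ of each internal edge $e_j$ transversely in exactly $m_j:=i(\gamma,\alpha_j)$ points, runs as $m_j$ parallel strands along $e_j$ (possibly braided past the $f_{c(e_j)}$-cable, which by the first two fusion rules of Figure \ref{fig:fusion} only contributes factors $A^{\pm c(e_j)}$ and $-A^{\mp(c(e_j)+2)}$), and restricts on each ball $B_v$ cut off by the co-cores to a standard arc pattern of the pair of pants around the vertex $v$; being closed, $\gamma$ misses the co-cores of the univalent edges, so it is disjoint from $\mathcal{U}$. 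Cutting $H$ along all the $D_j$, the skein $\gamma\cup\varphi_c$ equals $f_{c(e_j)}\otimes f_1^{\otimes m_j}$ in a collar of $D_j$; since $\varphi_{c'}$ is precisely the basis vector inducing $f_{c'(e_j)}$ across $D_j$, and $f_{c(e_j)}\otimes f_1^{\otimes m_j}$ only contains Jones--Wenzl idempotents $f_\ell$ with $\ell\equiv c(e_j)+m_j\pmod 2$ and $c(e_j)-m_j\le\ell\le c(e_j)+m_j$, we conclude that $T^\gamma\varphi_c$ is a combination of $\varphi_{c+k}$ with $k$ supported on $\mathcal{E}'$, $|k(j)|\le m_j$ and $k(j)\equiv m_j\pmod 2$. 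This is assertion (1); it also shows that every shift $k$ that occurs lies in $\Lambda$, since $m_e+m_f+m_g$ is even at each trivalent vertex.

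To pin down the coefficients, resolve $\gamma$ move by move: fuse the $m_j$ strands along $e_j$ into $f_{c(e_j)}$ in steps of $\pm1$, then absorb the remaining arcs inside each $B_v$ with the vertex recoupling rules of Figure \ref{fig:fusion}. The sequence of moves depends only on the combinatorial position of $\gamma$, not on $c$, and the coefficient of $\varphi_{c+k}$ comes out as a finite sum of products of elementary fusion coefficients; each such coefficient is the value at the current colours $(A^{c(e)})_e$ of a fixed element of $\Q(A)(Q_1,\dots,C_b)$ all of whose denominators are quantum integers $\{t\}=A^{2t}-A^{-2t}$ in a single edge colour shifted by a bounded amount. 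Under $Q_i\mapsto A^{c(i)}$, $C_i\mapsto A^{c(n+i)}$ such a factor becomes $A^{2t}Q_i^2-A^{-2t}Q_i^{-2}$, which is exactly a denominator allowed in $\mathcal{R}$. Hence on a large subset $V_\gamma$ — a finite intersection of translates $\Delta+v$ with $v\in\Lambda$, chosen so that every colouring occurring in the resolution is admissible and has all coordinates larger than $\max_j m_j$, so that no such $\{t\}$ vanishes; this is a large subset, nonempty by Lemma \ref{lemma:largeset} — the coefficient of $\varphi_{c+k}$ equals $F_k^\gamma(A^{c(1)},\dots,A^{c(n+b)})$ for a well-defined $F_k^\gamma\in\mathcal{R}$, which is the displayed formula.

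For assertion (2), fix signs $\varepsilon_j\in\{\pm1\}$ and take $k(j)=\varepsilon_j m_j$. Route the resolution so that every one of the $m_j$ strands along $e_j$ pushes the colour of $e_j$ in the direction $\varepsilon_j$: then at each $D_j$ the extreme idempotent $f_{c(e_j)+\varepsilon_j m_j}$ occurs with multiplicity one in $f_{c(e_j)}\otimes f_1^{\otimes m_j}$, and at each $B_v$ the transition between the two extreme colourings is the corresponding extreme $6j$-symbol, which the iterated triangle rules of Figure \ref{fig:fusion} express as a single power of $A$ times a product of quantum integers $\{t\}$ with $t>0$, together with the framing monomials above — a nonzero element of the domain $\Q(A)(Q_1,\dots,C_b)$. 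Taking the product over all edges and vertices gives $F_k^\gamma\neq 0$. The main obstacle is precisely this last step: one must organize the recoupling so that the extreme corner is reached by an essentially unique sequence of moves and invoke the explicit values of the extreme $6j$-symbols to see the coefficient does not vanish; by contrast, the support estimate (1) is a one-line consequence of the local Jones--Wenzl decomposition, and the rationality with controlled denominators is bookkeeping over the fusion rules.
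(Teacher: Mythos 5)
Your construction of the expansion, the control of denominators, and part (1) follow essentially the same route as the paper: put $\gamma$ in Dehn--Thurston position, fuse at the intersection points with the pants curves using the rules of Figure \ref{fig:fusion}, observe that $f_{c(e_j)}\otimes f_1^{\otimes m_j}$ only produces idempotents $f_\ell$ with $\ell\equiv c(e_j)+m_j \pmod 2$ and $|\ell - c(e_j)|\le m_j$, and note that all denominators are quantum integers $\{c(e)+t\}$ in internal edge colours (your remark that a closed $\gamma$ misses the co-cores of the univalent edges is the reason no denominator in a $C_i$ ever appears, which is needed for membership in $\mathcal{R}$). Your preliminary reduction to connected curves via $T^{\gamma_1\sqcup\gamma_2}=T^{\gamma_1}T^{\gamma_2}$ is correct and is not in the paper, but it does not change the substance since the pair-of-pants evaluations still have to be controlled.

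The genuine gap is in part (2). You correctly isolate that the extremal shift $k=(\varepsilon_1 m_1,\dots,\varepsilon_n m_n)$ receives a contribution from a single fusion channel (all $\pm1$ shifts equal to $\varepsilon_j$ at every intersection point), so its coefficient is a product over annuli and pants of individual scalars; but the assertion that each pants evaluation at the extremal colours is ``a single power of $A$ times a product of quantum integers $\{t\}$ with $t>0$'' is exactly the claim that needs proof, and you do not establish it. The fusion rules of Figure \ref{fig:fusion} have numerators such as $\lbrace \frac{b+c-a}{2}\rbrace$ that vanish when the colours sit on the boundary of admissibility, which is precisely the regime of the extremal shift, so one must actually check that the particular extremal evaluations occurring for an arbitrary Dehn--Thurston pattern (including twisting in the annuli and boundary-to-same-boundary arcs in the pants) never produce an identically vanishing product. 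You flag this yourself as ``the main obstacle'' and then invoke unspecified ``explicit values of the extreme $6j$-symbols''; that is an appeal to a computation you have not done. The paper avoids this verification entirely and proves (2) by a different mechanism: it cites Theorem 1.3 of \cite{Det16}, which identifies the (suitably renormalized) coefficients $F_k^{\gamma}$ with Fourier coefficients of the trace function of $\gamma$ on level sets of the moment map on the $\mathrm{SU}(2)$ moduli space, and Lemma 4.3 of \cite{CM12}, which shows the extremal Fourier coefficients of such trace functions are nonzero. So either carry out the extremal recoupling computation in full (for all the local patterns of Figure \ref{fig:Patterns}), or supply a reference that does; as written, (2) is not proved.
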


The proof of Proposition \ref{prop:expand} involves fusion calculations to compute coefficients of curve operators, and will be done in Section \ref{sec:curveOpCoeff}.
\begin{lemma}\label{lemma:uniqueCoeff} Assume that for some multicurve $\gamma$ on $\Sigma,$ there exists two large subsets $V$ and $V'$ of $\N^{\mathcal{E}}$ and coefficients $F_k$ and $G_k \in \mathcal{R}$ such that for any $c \in V$ (resp. $V'$) $T^{\gamma}\varphi_c$ admits a decomposition as in Proposition \ref{prop:expand}. Then $F_k=G_k$ for all $k:\mathcal{E}'\rightarrow \Z.$
\end{lemma}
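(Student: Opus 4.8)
\emph{Proof proposal.} The plan is to deduce the identity $F_k=G_k$ in $\mathcal{R}$ from the fact that $\{\varphi_c\}$ is a basis together with a Zariski-density argument exploiting the transcendence of $A$.

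First I would check that $V\cap V'$ is again a large subset. By definition $V$ contains a set $\bigcap_{j}(\Delta+v_j)$ and $V'$ a set $\bigcap_{j}(\Delta+v_j')$ with all $v_j,v_j'\in\Lambda$, so $V\cap V'$ contains a finite intersection of $\Lambda$-translates of $\Delta$; the argument in the proof of Lemma \ref{lemma:largeset} shows such an intersection is nonempty, and in fact contains $B((2k,\ldots,2k),r)\cap\Lambda$ for every $r$, provided $k$ is taken large enough (depending on $r$ and on the translation vectors). Now for any $c\in V\cap V'$ both decompositions of Proposition \ref{prop:expand} hold, so equating the coefficient of $\varphi_{c+k}$ in the basis $\{\varphi_d\}$ of $S(H,\Q(A))$ gives
$$F_k\big(A^{c(1)},\ldots,A^{c(n+b)}\big)=G_k\big(A^{c(1)},\ldots,A^{c(n+b)}\big)$$
for every $k:\mathcal{E}'\to\Z$ and every such $c$; restricting to the (still large) set of $c$ whose coordinates are large enough that the denominators $A^mQ_j^2-A^{-m}Q_j^{-2}$ of $F_k$ and $G_k$ do not vanish under $Q_j\mapsto A^{c(j)}$, this is a genuine equality in $\Q(A)$.

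Next I would fix $k$ and write $F_k-G_k=W/V_0$, where $V_0$ is a product of denominators of the allowed form and $W\in\Z[A^{\pm1}][Q_1^{\pm1},\ldots,Q_n^{\pm1},C_1^{\pm1},\ldots,C_b^{\pm1}]$; since $\mathcal{R}$ is an integral domain it suffices to prove $W=0$. Let $S\subset\Z^{\mathcal{E}}$ be the finite set of exponent vectors of the monomials of $W$ (a monomial $Q_1^{a_1}\cdots Q_n^{a_n}C_1^{b_1}\cdots C_b^{b_b}$ corresponding to $(a_1,\ldots,a_n,b_1,\ldots,b_b)$). Choose $v\in 2\Z^{\mathcal{E}}\subseteq\Lambda$ so that $\alpha\mapsto\langle\alpha,v\rangle$ is injective on $S$ (possible, since the $v$ that fail lie in a finite union of rational hyperplanes which cannot cover $\Q^{\mathcal{E}}$), and set $D=\max_{\alpha\in S}\langle\alpha,v\rangle-\min_{\alpha\in S}\langle\alpha,v\rangle$. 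Using the ball statement of the first paragraph, pick $k$ large enough that $c_0:=(2k,\ldots,2k)$ and the whole ball $B\big(c_0,(D+1)||v||_{\infty}\big)\cap\Lambda$ lie in $V\cap V'$ and in the locus where $V_0$ does not vanish. Then for every integer $t$ with $|t|\le D+1$ the point $c_0+tv$ lies in that ball, so $W\big(A^{c_0(1)+tv(1)},\ldots,A^{c_0(n+b)+tv(n+b)}\big)=0$. Writing $u=A^t$, this reads $\sum_{\alpha\in S}q_\alpha(A)\,u^{\langle\alpha,v\rangle}=0$ with $q_\alpha(A)=w_\alpha(A)\,A^{\langle\alpha,c_0\rangle}\ne 0$, where $w_\alpha\in\Z[A^{\pm1}]$ is the coefficient of the $\alpha$-monomial in $W$; as the exponents $\langle\alpha,v\rangle$ are pairwise distinct, after multiplying by a suitable power of $u$ this is a nonzero polynomial in $u$ over $\Q(A)$ of degree $D$, hence with at most $D$ roots in $\overline{\Q(A)}$. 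But the $2D+3$ values $u=A^t$, $|t|\le D+1$, are pairwise distinct (as $A$ is transcendental) and are all roots — a contradiction. Hence $W=0$ and $F_k=G_k$.

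The main obstacle is the density statement used in the last paragraph: no nonzero Laurent polynomial in the $Q_i,C_j$ vanishes at all the substitution points $\big(A^{c(1)},\ldots,A^{c(n+b)}\big)$ with $c$ in a large subset. Its proof rests on the — perhaps not entirely obvious — fact that a large subset, though sparse in $\N^{\mathcal{E}}$, contains arbitrarily long arithmetic progressions $c_0,c_0+v,c_0+2v,\ldots$ in any prescribed lattice direction $v\in\Lambda$, which follows from the ball estimate in the proof of Lemma \ref{lemma:largeset}; combined with the linear independence of distinct powers of the transcendental element $A$ over $\Q$, this reduces matters to the elementary bound on the number of zeros of a one-variable polynomial. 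The remaining ingredients — that an intersection of large subsets is large, and that coefficients in the basis $\{\varphi_c\}$ are unique — are routine.
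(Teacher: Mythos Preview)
Your proof is correct and follows the same overall architecture as the paper's: intersect $V$ and $V'$ to get a large set, invoke the ball property from Lemma~\ref{lemma:largeset} to produce many substitution points, equate coefficients in the basis $\{\varphi_c\}$, clear denominators, and argue that a Laurent polynomial vanishing at all those points must be zero.

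The only genuine difference is in how the last ``Zariski-density'' step is executed. The paper exploits the product structure directly: since $B((2k,\ldots,2k),r)\cap 2\Z^{\mathcal{E}}$ is a product of arithmetic progressions, each coordinate independently takes more values than the degree of $W$ in that variable, and a short induction on the number of variables finishes. You instead pick a single generic direction $v\in 2\Z^{\mathcal{E}}$ on which the exponent vectors of $W$ project injectively, and run a long arithmetic progression $c_0+tv$ inside the ball to reduce to a one-variable polynomial in $u=A^t$ with too many roots. Both arguments are standard variants of polynomial identity testing; the paper's is a bit shorter and avoids having to choose $v$ and bound $\|v\|_\infty$, while yours has the minor advantage of not needing the induction on variables. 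Either way the essential content --- that the ball estimate in Lemma~\ref{lemma:largeset} supplies enough evaluation points to force $W=0$ over $\Q(A)$ --- is the same.
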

\begin{proof}
Indeed, the intersection $V\cap V'$ will be also be a large subset of $\N^{\mathcal{E}}$ and thus will contain by the proof of Lemma \ref{lemma:largeset} subsets of the form $B(v,r)\cap \Lambda$ where $v\in \N^{\mathcal{E}}$ and $r$ can be arbitrary large. Notice that $\Lambda$ contains the lattice $2\Z^{\mathcal{E}}.$ Assume that $r$ is strictly larger than  $d,$ the maximum of the degrees of the rational fractions $F_k-G_k$ (which we define as the maximum of the degree of their numerator and denominator). By Proposition \ref{prop:expand} the rational fractions $F_k$ and $G_k$ coincide on the set of all $(A^{c(1)},\ldots A^{c(n+b)})$ where $c \in B(v,r)\cap 2\Z^{\mathcal{E}},$ which is a product of sets that contains more than $d$ elements. By an easy induction on the number of variables $n$ of $F_k$ and $G_k,$ we can deduce that $F_k=G_k.$
\end{proof}
Thanks to Lemma \ref{lemma:uniqueCoeff}, we can make the following definition:
\begin{definition}\label{def:embedding}
For $\gamma$ a multicurve, we define $$\sigma(\gamma) = \sum_{k : \mathcal{E}' \to \Z} E^k F_k^{\gamma}(Q_1,\ldots,Q_{n},C_1,\ldots,C_b) \in \mathcal{A}(\Gamma)$$ and we extend linearly this definition to a $\Z[A^{\pm 1}]$-module morphism $$\sigma : S(\Sigma) \to \mathcal{A}(\Gamma)$$
\end{definition}

\begin{lemma}
$\sigma : S(\Sigma) \to \mathcal{A}(\Gamma)$ is a $\Z[A^{\pm 1}]$-algebra morphism.
\end{lemma}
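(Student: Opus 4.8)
The plan is to show that $\sigma$ respects multiplication, i.e. that $\sigma(\gamma_1 \gamma_2) = \sigma(\gamma_1)\sigma(\gamma_2)$ for any two multicurves $\gamma_1,\gamma_2$ on $\Sigma$, since these span $S(\Sigma)$ as a $\Z[A^{\pm 1}]$-module and $\sigma$ has been extended linearly. The key point is that $\sigma$ was \emph{defined} by reading off the matrix coefficients of the curve operators $T^\gamma$ acting on the basis $\{\varphi_c\}$ of $S(H,\Q(A))$, and that the assignment $\gamma \mapsto T^\gamma$ is already a morphism (since $S(H,\Q(A))$ is a module over $S(\Sigma)$, one has $T^{\gamma_1}T^{\gamma_2} = T^{\gamma_1\gamma_2}$, the action being by stacking). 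So the whole argument reduces to checking that the encoding of a curve operator into the localized quantum torus is compatible with composition of operators.

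Concretely, I would proceed as follows. First, fix multicurves $\gamma_1,\gamma_2$ and pick, using Lemma \ref{lemma:largeset}, a large subset $V \subset \N^{\mathcal{E}}$ small enough that the expansions of Proposition \ref{prop:expand} hold simultaneously for $\gamma_1$, for $\gamma_2$, and for $\gamma_1\gamma_2$ on $V$, and moreover such that $V + k \subset V_{\gamma_2}$ for every $k$ occurring in the (finite) expansion of $T^{\gamma_1}$; this is possible because the set of such $k$ is finite and a translate of a large set is large, and a finite intersection of large sets is large. Then for $c \in V$ I would compute
$$ T^{\gamma_1\gamma_2}\varphi_c = T^{\gamma_1}(T^{\gamma_2}\varphi_c) = T^{\gamma_1}\Big( \sum_{l} F_l^{\gamma_2}(A^{c})\varphi_{c+l}\Big) = \sum_{k,l} F_k^{\gamma_1}(A^{c+l})\, F_l^{\gamma_2}(A^{c})\, \varphi_{c+k+l}, $$
where $A^c$ abbreviates $(A^{c(1)},\ldots,A^{c(n+b)})$ and $A^{c+l}$ is obtained by shifting the first $n$ exponents by $l$. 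Comparing with the expansion $T^{\gamma_1\gamma_2}\varphi_c = \sum_m F_m^{\gamma_1\gamma_2}(A^c)\varphi_{c+m}$ and using Lemma \ref{lemma:uniqueCoeff} to identify the rational-fraction coefficients (this is exactly the place the uniqueness lemma is needed, since a priori $T^{\gamma_1\gamma_2}$ only admits \emph{some} expansion on \emph{some} large set), I get the identity of rational functions
$$ F_m^{\gamma_1\gamma_2}(Q_1,\ldots,Q_n,C) = \sum_{k+l=m} \widehat{F_k^{\gamma_1}}^{(l)}\, F_l^{\gamma_2}\,, $$
where $\widehat{(\cdot)}^{(l)}$ denotes the substitution $Q_i \mapsto A^{l_i}Q_i$ from Section \ref{sec:qTorus}. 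On the other hand, multiplying out $\sigma(\gamma_1)\sigma(\gamma_2) = \big(\sum_k E^k F_k^{\gamma_1}(Q,C)\big)\big(\sum_l E^l F_l^{\gamma_2}(Q,C)\big)$ using the defining multiplication rule of $\mathcal{A}(\Gamma)$ gives precisely $\sum_m E^m \sum_{k+l=m}\widehat{F_k^{\gamma_1}}^{(l)} F_l^{\gamma_2}$, which is $\sigma(\gamma_1\gamma_2)$. One also checks $\sigma$ sends the empty multicurve to $1$, so $\sigma$ is unital.

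The one genuinely delicate bookkeeping point — the main obstacle — is making sure the substitution rule in the quantum torus matches the exponent shift in the operator computation: when $T^{\gamma_1}$ is applied to $\varphi_{c+l}$ rather than to $\varphi_c$, its coefficient is the rational fraction $F_k^{\gamma_1}$ evaluated at $A^{c(j)+l(j)} = A^{l(j)}\cdot A^{c(j)}$ in the $Q_j$-slots (and unchanged in the $C$-slots, since the $C$-variables correspond to univalent edges which are not translated by $\Lambda$-vectors and on which multicurves in $\Sigma$ have zero intersection), and this is by definition $\widehat{F_k^{\gamma_1}}^{(l)}$ evaluated at $A^c$. This is exactly the twist $Q_iE_i = AE_iQ_i$ built into $\mathcal{A}(\Gamma)$, so the compatibility is automatic once one unwinds the definitions; I just need to be careful that the conventions (left versus right module, the direction of the shift, which variables are affected) all line up consistently. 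Everything else is formal, relying only on the finiteness statements in Proposition \ref{prop:expand}(1), the largeness calculus of Lemma \ref{lemma:largeset}, and the uniqueness of coefficients from Lemma \ref{lemma:uniqueCoeff}.
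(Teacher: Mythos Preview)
Your proposal is correct and follows essentially the same approach as the paper: both arguments reduce the multiplicativity of $\sigma$ to the fact that $\gamma\mapsto T^\gamma$ is already an algebra map, then check that composing curve operators in the basis $\{\varphi_c\}$ reproduces the twisted multiplication rule of $\mathcal{A}(\Gamma)$, using the largeness calculus and the uniqueness lemma to identify coefficients. One small bookkeeping slip: since you compute $T^{\gamma_1\gamma_2}\varphi_c = T^{\gamma_1}(T^{\gamma_2}\varphi_c)$, after applying $T^{\gamma_2}$ you land at $\varphi_{c+l}$ and then need $c+l\in V_{\gamma_1}$, so the correct requirement on $V$ is that $V+l\subset V_{\gamma_1}$ for every $l$ in the expansion of $T^{\gamma_2}$ (you wrote the roles of $\gamma_1,\gamma_2$ swapped in that sentence, though your subsequent computation is consistent with the correct version).
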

\begin{proof}
Note that the map $\gamma \in S(\Sigma,\Z[A^{\pm 1}])\longmapsto T^{\gamma}\in \mathrm{End}(S(H,\Q(A)))$ is a morphism of algebra.
The lemma will follow from the fact that $\sigma(\gamma)$ encodes the action of $T^{\gamma} \in \mathrm{End}(S(H,\Q(A)))$ in the basis $\varphi_c$ and that the multiplication in $\mathcal{A}(\Gamma)$ corresponds to the composition of operators.

Indeed, let $\gamma$ and $\delta$ be two multicurves, and assume that 
$$T^{\gamma}\varphi_c=\underset{k:\mathcal{E}'\rightarrow \Z}{\sum}F_k^{\gamma}(A^{c(1)},\ldots,A^{c(n+b)})\varphi_{c+k}$$
for any $c\in V_{\gamma}$ and 
$$T^{\delta}\varphi_c=\underset{k:\mathcal{E}'\rightarrow \Z}{\sum}F_k^{\delta}(A^{c(1)},\ldots,A^{c(n+b)})\varphi_{c+k}$$
for any $c\in V_{\delta}.$ Let $V=\underset{k\in \Lambda, |k_i|\leq i(\gamma,\alpha_i)}{\bigcap} V_{\delta}+k.$ Then $V$ is a large subset of $\N^{\mathcal{E}'}$ and for any $c\in V$ and any $k\in \Lambda$ such that $F_k^{\gamma} \neq 0,$ we have $c+k\in V_{\delta}.$ Hence we get
\begin{multline*}T^{\delta\cdot\gamma}\varphi_c=T^{\delta}\left(\underset{k:\mathcal{E}'\rightarrow \Z}{\sum}F_k^{\gamma}(A^{c(1)},\ldots,A^{c(n+b)})\varphi_{c+k}\right)
\\=\underset{k,l:\mathcal{E}'\rightarrow \Z}{\sum}F_l^{\delta}(A^{c(1)+k(1)},\ldots,A^{c(n)+k(n)},A^{c(n+1)},\ldots,A^{c(n+b)})F_k^{\gamma}(A^{c(1)},\ldots,A^{c(n+b)})\varphi_{c+k+l}.
\end{multline*}
Therefore, comparing with the formula for the product in $\mathcal{A}(\Gamma)$ in Section \ref{sec:qTorus}, we get that $\sigma(\delta\cdot \gamma)=\sigma(\delta)\sigma(\gamma).$ 

The general case of $\gamma,\delta \in S(\Sigma,\Z[A^{\pm 1}])$ follows by linearity.
\end{proof}

\subsection{Injectivity of $\sigma$}
\label{sec:injectivity}
Let $\gamma$ be a multicurve, from now on, for any $k : \mathcal{E}' \to \Z$, the element $F_k^{\gamma}(Q_1,\ldots,Q_{n-b},C_1,\ldots,C_b) \in \mathcal{R}$  will be simply denoted by $F_k^{\gamma}$.

\begin{lemma} \label{lemma:fracdehn}
Let $\gamma$ be a multicurve, and let $k:\mathcal{E'}\rightarrow \Z.$ We assume that $|k_j| =\varepsilon k_j =   i(\gamma,\alpha_j) \neq 0$ with $\epsilon = \pm 1$. Let $\gamma_+$ be the curves obtained from $\gamma$ by applying a positive fractional twist along $\alpha_j$. We have $$F_k^{\gamma_+} = -A^{2\varepsilon+|k_j|} Q_j^{2\varepsilon} F_k^{\gamma}$$

\end{lemma}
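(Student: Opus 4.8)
The plan is to compute how a positive fractional (Dehn) twist along $\alpha_j$ modifies the curve operator $T^{\gamma}$, and to track only the effect on the extremal coefficient $F_k^{\gamma}$ where $|k_j| = i(\gamma,\alpha_j) \neq 0$. The key geometric input is that the fractional twist along $\alpha_j$ is realized, on the skein module $S(H,\Q(A))$, by an explicit change of basis that is diagonal in the basis $\varphi_c$: twisting an edge colored $c(j)$ by a half-twist multiplies the vector by the twist eigenvalue $\mu_{c(j)} = (-1)^{c(j)} A^{c(j)^2 + 2 c(j)}$ (up to the standard normalization; here I would use the framing/ribbon coefficient for the Jones–Wenzl idempotent $f_{c(j)}$). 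Equivalently, conjugating $T^{\gamma}$ by the diagonal operator $D$ with $D\varphi_c = \mu_{c(j)}\varphi_c$ produces $T^{\gamma_+}$, since $\gamma_+$ is obtained from $\gamma$ by pushing it through this twist near $\alpha_j$.

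First I would set up the diagonal twist operator precisely: on the handlebody side, a fractional twist along the curve $\alpha_j$ (which bounds a disk in $H$ and is dual to the edge $j$ of $\Gamma$) acts on $\varphi_c$ by the ribbon element eigenvalue attached to the color $c(j)$, call it $t_{c(j)}$, with $t_{m+2}/t_m = A^{?}\cdot(\dots)$ — the exact ratio is what matters. Then $T^{\gamma_+} = D^{-1} T^{\gamma} D$ (or $D T^{\gamma} D^{-1}$, depending on the twist sign convention), so for $c$ in a suitable large subset, comparing the $\varphi_{c+k}$-coefficient gives
$$F_k^{\gamma_+}(A^{c(1)},\ldots,A^{c(n+b)}) = \frac{t_{c(j)}}{t_{c(j)+k_j}}\, F_k^{\gamma}(A^{c(1)},\ldots,A^{c(n+b)}).$$
Now I would plug in the explicit ribbon eigenvalues and simplify the ratio $t_{c(j)}/t_{c(j)+k_j}$ as a function of $A^{c(j)}$ and $k_j$. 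Since the twist eigenvalue is quadratic in the color, its ratio over a shift by $k_j$ is a monomial in $A^{c(j)}$: one gets something of the shape $-A^{a}\, A^{b\,c(j)}$ for explicit small integers $a,b$ depending on $\varepsilon$ and $|k_j|$. Setting $Q_j = A^{c(j)}$ (as in the definition of $\sigma$), the factor becomes $-A^{2\varepsilon + |k_j|} Q_j^{2\varepsilon}$, which is exactly the claimed formula. The hypothesis $|k_j| = \varepsilon k_j = i(\gamma,\alpha_j)$ is used here: it says $k$ is extremal in the $j$-th coordinate, so there is no ambiguity in which branch of the fusion expansion contributes, and the ratio of eigenvalues is computed between colors differing by exactly $k_j = \varepsilon |k_j|$.

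I expect the main obstacle to be the bookkeeping of conventions: getting the sign of the fractional twist matched to the sign $\varepsilon$, and getting the normalization of the ribbon/twist coefficient on the Jones–Wenzl idempotent right, including the $(-1)^{c(j)}$ that must combine with $A^{c(j)^2}$-type terms to leave only the linear-in-$c(j)$ monomial after taking the ratio — the quadratic terms $A^{c(j)^2}$ must cancel, and checking that cancellation (it follows from $(c(j)+k_j)^2 - c(j)^2 = 2k_j c(j) + k_j^2$ being linear in $c(j)$) is the crux of why the answer is a clean monomial. A secondary point is to justify that the fractional twist really acts diagonally in the $\varphi_c$ basis and intertwines $T^{\gamma}$ with $T^{\gamma_+}$ as operators on $S(H,\Q(A))$; this is standard from the fact that $\alpha_j$ bounds a disk in $H$ so a twist supported near it is isotopic to a twist of the corresponding edge of $\Gamma$, but I would state it carefully, and then invoke Lemma \ref{lemma:uniqueCoeff} to conclude the identity of rational fractions $F_k^{\gamma_+} = -A^{2\varepsilon+|k_j|} Q_j^{2\varepsilon} F_k^{\gamma}$ from its validity on a large subset.
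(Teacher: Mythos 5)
Your central mechanism does not work. The positive fractional twist along $\alpha_j$ shifts the $j$-th twist coordinate of $\gamma$ by $1$, whereas conjugating $T^{\gamma}$ by the diagonal operator $D$ induced by the twist of the band $j$ (eigenvalue $(-1)^{m}A^{m^2+2m}$ on color $m$) realizes the \emph{full} Dehn twist $t_{\alpha_j}$, which shifts the twist coordinate by $i(\gamma,\alpha_j)=|k_j|$. The fractional twist is not induced by any homeomorphism of $(H,\partial H)$ when $|k_j|>1$, and in fact no diagonal conjugation can produce $T^{\gamma_+}$: if $T^{\gamma_+}=D^{-1}T^{\gamma}D$ held, every coefficient $F_{k'}^{\gamma_+}$ would be a fixed monomial times $F_{k'}^{\gamma}$, including the non-extremal ones, and this fails. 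For example, for the separating edge curve of Section \ref{sec:sausage_case} one has $i(\gamma,\alpha_c)=2$ and the $k=0$ coefficient changes from $G_0$ to $\bigl(G_0\,\sigma(c)-\sigma(d_1d_3+d_2d_4)\bigr)/(A^2+A^{-2})$ under the fractional twist, not to $\pm G_0$. Your eigenvalue ratio also does not give the stated monomial: $t_{c(j)+k_j}/t_{c(j)}=(-1)^{k_j}A^{k_j^2+2k_j}Q_j^{2k_j}$, which equals $\bigl(-A^{2\varepsilon+|k_j|}Q_j^{2\varepsilon}\bigr)^{|k_j|}$ rather than $-A^{2\varepsilon+|k_j|}Q_j^{2\varepsilon}$; so at best your argument proves the $|k_j|$-th power of the claimed identity (equivalently, the lemma for full Dehn twists), which is genuinely weaker and does not determine the sign or the twist-by-one behaviour needed later (e.g.\ in Proposition \ref{prop:image}, where the $\gamma_{\varepsilon}$ differ by single fractional twists).

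The paper's proof avoids all of this by staying inside the algebra: it resolves the $|k_j|$ crossings of $\alpha_j$ with $\gamma$ to get $\alpha_j\gamma=A^{k_j}\gamma_++A^{-k_j}\gamma_-+(\text{lower order})$ and similarly for $\gamma\alpha_j$, applies the already-established morphism $\sigma$ with $\sigma(\alpha_j)=-(A^2Q_j^2+A^{-2}Q_j^{-2})$ and the commutation $Q_j^2E^k=A^{2k_j}E^kQ_j^2$, notes that lower-order curves cannot contribute to the extremal coefficient $E^k$, and solves the resulting $2\times 2$ linear system for $F_k^{\gamma_+}$ and $F_k^{\gamma_-}$. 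If you want to salvage a geometric argument, you would have to work at the level of this skein resolution anyway; the conjugation-by-twist picture should be discarded.
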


\begin{proof}We will treat only the case $\varepsilon=+1,$ the case $\varepsilon=-1$ being completely similar.  Let $\gamma_-$ be the curves obtained from $\gamma$ by applying  negative fractional twist along $\alpha_j$. We have : 
$\alpha_j \gamma = A^{k_j} \gamma_+ + A^{-k_j} \gamma_- + \text{lower order curves},$ where by lower order we mean less geometric intersection with $\alpha_j.$ Hence by identifying the terms in $E^k$ in $\sigma(\alpha_j \gamma)$ we have $-(A^2 Q_j^2+A^{-2} Q_j^{-2}) E^k F_k^{\gamma} =  A^{k_j} E^k F^{\gamma_+} + A^{-k_j} E^k F_k^{\gamma_-}$ using $Q_j^2 E^k = A^{2k_j} E^k Q_j^2$ and simplify by $E^k$, we get 
$$- (A^{2+2k_j} Q_j^2+A^{-2-2k_j} Q_j^{-2})F_k^{\gamma} =  A^{k_j}  F_k^{\gamma_+} + A^{-k_j}  F_k^{\gamma_-}$$ Similarly if we expand $\gamma \alpha_j$, we get $$- (A^{2} Q_j^2+A^{-2} Q_j^{-2})F_k^{\gamma} =  A^{-k_j}  F_k^{\gamma_+} + A^{k_j}  F_k^{\gamma_-}$$
We conclude by solving the system of two equations.
\end{proof}
\begin{proposition}
$\sigma : S(\Sigma) \to \mathcal{A}(\Gamma)$ is injective. 
\end{proposition}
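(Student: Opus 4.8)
The plan is to prove injectivity of $\sigma$ by exhibiting, for any nonzero element $x \in S(\Sigma)$, a monomial $E^k$ with $k \in \Lambda$ whose coefficient in $\sigma(x)$ is nonzero. Write $x = \sum_j \lambda_j \gamma_j$ as a finite linear combination of distinct multicurves $\gamma_j$ with $\lambda_j \in \Z[A^{\pm 1}] \setminus \{0\}$. The key structural input is Proposition \ref{prop:expand}: for each multicurve $\gamma$, the coefficient $F^\gamma_k$ vanishes unless $|k(j)| \le i(\gamma,\alpha_j)$ and $k(j) \equiv i(\gamma,\alpha_j) \pmod 2$ for all $j$, and it is \emph{nonzero} for the extreme choices $k = (\pm i(\gamma,\alpha_1),\ldots,\pm i(\gamma,\alpha_n))$. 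So a multicurve $\gamma$ contributes to the monomial $E^k$ (for an extreme $k$) precisely when its geometric intersection vector $\big(i(\gamma,\alpha_1),\ldots,i(\gamma,\alpha_n)\big)$ matches $|k| = (|k(1)|,\ldots,|k(n)|)$.

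The first step is to reduce to the case where all the $\gamma_j$ have the same intersection vector with $\mathcal{P}$. Among the multicurves appearing in $x$, choose one — say $\gamma_{j_0}$ — whose intersection vector $m = (m_1,\ldots,m_n)$, $m_i = i(\gamma_{j_0},\alpha_i)$, is maximal, e.g.\ maximal for the lexicographic order after sorting, or simply maximal for the partial order coordinatewise among a cofinal chain; more robustly, pick $m$ maximal for the sum $\sum_i m_i$ and break ties lexicographically. Let $J$ be the set of indices $j$ with intersection vector exactly equal to $m$. By item (1) of Proposition \ref{prop:expand}, only the $\gamma_j$ with $j \in J$ can have a nonzero coefficient along $E^k$ when $|k| = m$; all other terms of $x$ contribute $0$ to those monomials. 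So it suffices to show that $\sum_{j \in J} \lambda_j \sigma(\gamma_j)$ has a nonzero coefficient along some $E^k$ with $|k| = m$, equivalently that the element $\sum_{j\in J}\lambda_j F^{\gamma_j}_k \in \mathcal{R}$ is nonzero for some such $k$.

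The second and main step is to distinguish the multicurves in $J$, which all have the same intersection profile with $\mathcal{P}$, by looking at how the coefficients $F^{\gamma_j}_k$ transform under (fractional) Dehn twists — this is exactly what Lemma \ref{lemma:fracdehn} is for. Fix $k$ with $|k| = m$ and $k(i) = \varepsilon_i m_i$. For any index $i$ with $m_i \neq 0$, replacing $\gamma_j$ by its image $\gamma_j^+$ under a positive fractional twist along $\alpha_i$ multiplies $F^{\gamma_j}_k$ by the \emph{same} scalar $-A^{2\varepsilon_i + m_i} Q_i^2$ (if $\varepsilon_i = +1$), independent of $j$; iterating full twists, a product of $t_i$ full Dehn twists along $\alpha_i$ multiplies $F^{\gamma_j}_k$ by a monomial in $A$ and $Q_i$ whose $Q_i$-degree is the same for all $j$ but whose $A$-power is $A^{(\text{something})\, t_i}$ — \emph{and here the $A$-exponent does depend on $j$}, because a full twist along $\alpha_i$ changes the intersection numbers $i(\gamma_j,\alpha_{i'})$ with the \emph{other} curves $\alpha_{i'}$ differently for different $\gamma_j$, which feeds back through item (1) and the recursion. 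More precisely: after applying a high power $t_i$ of the Dehn twist $\tau_i$ along $\alpha_i$ to $x$, the multicurves $\tau_i^{t_i}(\gamma_j)$ for $j \in J$ will generically \emph{no longer} all share the same intersection vector with $\mathcal{P}$ (their intersections with $\alpha_{i'}$, $i'\neq i$, spread out linearly in $t_i$ with slopes $i(\gamma_j,\alpha_i)\cdot i(\gamma_j,\alpha_i)$-type coefficients governed by the geometric intersection formula), so by iterating the reduction of Step~1 — now applied to $\sigma(x)$ composed with the (automatically bijective) action of a mapping class on $\mathcal{A}(\Gamma)$ induced by $T^{\tau_i}$ — we peel off the $\gamma_j$ one at a time. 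Concretely, choose $\tau$ a composition of Dehn twists along curves of $\mathcal{P}$ such that the multicurves $\tau(\gamma_j)$, $j\in J$, have pairwise distinct intersection vectors with $\mathcal{P}$ (such $\tau$ exists because distinct multicurves are separated by Dehn--Thurston coordinates, which include intersection numbers with $\mathcal{P}$ together with twist parameters, and twisting converts twist-coordinate differences into intersection-number differences); then $\sigma(\tau(x)) = \Phi_\tau(\sigma(x))$ for the algebra automorphism $\Phi_\tau$ of (a suitable localization of) $\mathcal{A}(\Gamma)$ corresponding to conjugation by $T^{\tau}$, and since $\Phi_\tau$ is injective it suffices to show $\sigma(\tau(x)) \neq 0$; but now $\tau(x)$ has a \emph{unique} multicurve of maximal intersection vector, whose extreme coefficient $\lambda_{j}F^{\tau(\gamma_j)}_k$ is nonzero by item (2) of Proposition \ref{prop:expand}. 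Hence $\sigma(\tau(x)) \neq 0$, so $\sigma(x)\neq 0$.

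The main obstacle is the passage in the last step from ``distinct multicurves'' to ``distinct intersection vectors after twisting'': one must be sure that Dehn twisting along curves of $\mathcal{P}$ actually separates any two multicurves with the same $\mathcal{P}$-intersection vector (this is where Dehn--Thurston coordinates enter, and one needs the twist parameters to be genuinely detected — handling the subtlety that a single multicurve may have several components wrapping $\alpha_i$ with opposite orientations, so the twist parameter is subtler than a signed count), and that the corresponding conjugation by $T^\tau$ really is an automorphism of an algebra containing $\sigma(S(\Sigma))$ into which $\sigma$ still maps injectively-on-the-nose at the level of monomials. Alternatively, and perhaps more cleanly, one can avoid mapping-class-group automorphisms entirely and argue directly: fix the extreme $k$ and show by a careful bookkeeping — using Lemma \ref{lemma:fracdehn} repeatedly to track the $(A,Q_i)$-bidegrees of $F^{\tau^t(\gamma_j)}_k$ as functions of a multi-twist parameter $t$ — that the functions $t \mapsto F^{\tau^t(\gamma_j)}_k$ for $j \in J$ are linearly independent over $\Z[A^{\pm 1}]$ because their $A$-degrees grow at pairwise distinct rates; then no $\Z[A^{\pm1}]$-linear combination $\sum_{j\in J}\lambda_j F^{\gamma_j}_k$ can vanish. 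Either route reduces the whole statement to the two nonvanishing facts in Proposition \ref{prop:expand} plus the explicit twist behavior in Lemma \ref{lemma:fracdehn}.
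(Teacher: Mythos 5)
Your Step 1 (reduce, via a maximal intersection vector and Proposition \ref{prop:expand}(1), to the sub-sum of multicurves sharing that vector) matches the paper and is fine. The gap is in Step 2. Your main route rests on the claim that applying high powers of Dehn twists along the curves $\alpha_i\in\mathcal{P}$ will ``spread out'' the intersection numbers $i(\gamma_j,\alpha_{i'})$ and thereby separate the $\gamma_j\in J$. This is false: the curves of $\mathcal{P}$ are pairwise disjoint and each is fixed by the twist, so $i(\tau_{\alpha_i}^t(\gamma),\alpha_{i'})=i(\gamma,\alpha_{i'})$ for every $i'$. Twisting along $\mathcal{P}$ changes only the twist coordinates, never the intersection coordinates, so no power of such twists can make the intersection vectors of the $\gamma_j$ distinct. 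Twisting along curves \emph{not} in $\mathcal{P}$ would separate them, but then the intertwining automorphism $\Phi_\tau$ of $\mathcal{A}(\Gamma)$ you invoke is not available: the embedding $\sigma$ is built from the pants decomposition, and producing such a $\Phi_\tau$ is essentially as hard as the injectivity you are trying to prove. Your fallback route at the end also does not close the gap: linear independence of the functions $t\mapsto F_k^{\tau^t(\gamma_j)}$ would not give linear independence of their values at $t=0$, and the quantities that actually separate the coefficients are the $Q_i$-multidegrees coming from the twist coordinates, not growth rates of $A$-degrees (by Lemma \ref{lemma:fracdehn} the $A$-exponent of the multiplier depends only on $|k_j|=i(\gamma,\alpha_j)$, which is the \emph{same} for all $\gamma_j\in J$).

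The missing idea is to run Dehn--Thurston in the opposite direction from what you attempt. Since the $\gamma_j\in J$ have identical intersection coordinates, they differ \emph{only} in their twist coordinates; hence each is obtained from one fixed reference multicurve $\delta$ (chosen with trivial twist coordinate wherever $m_i=0$) by fractional twists along the $\alpha_i$ and by adding components parallel to those $\alpha_i$ with $m_i=0$. Iterating Lemma \ref{lemma:fracdehn} (and the scalar $-(A^2Q_i^2+A^{-2}Q_i^{-2})$ for parallel components) yields $F_k^{\gamma_j}=R_jF_k^{\delta}$, where the $R_j$ are pairwise distinct Laurent monomials in the $Q_i^{\pm2}$ up to these parallel-component factors, hence $\Z[A^{\pm1}]$-linearly independent. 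Since $F_k^{\delta}\neq0$ by Proposition \ref{prop:expand}(2) and $\mathcal{R}$ is a domain, $\sum_j\lambda_jR_jF_k^\delta=0$ forces all $\lambda_j=0$, and the induction on intersection vectors finishes the proof. So your toolbox is the right one, but the pivotal step --- expressing all extremal coefficients of the $\gamma_j\in J$ as distinct-monomial multiples of a single nonzero coefficient --- is absent, and what you substitute for it does not work.
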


\begin{proof}
Let $C$ be a finite set of multicurves and $x = \underset{\gamma \in C}{\sum} \lambda_{\gamma} \gamma \in S(\Sigma)$ such that $\sigma(x) =0$. 

Let us consider an element $k$ of the set of $n$-uples of the form $(i(\gamma,\alpha_1),\ldots ,i(\gamma,\alpha_n))$ where $\gamma \in C$ which is maximal for the lexicographical order. Notice that by Proposition \ref{prop:expand}, only the multicurves $\gamma$ such that $(i(\gamma,\alpha_1),\ldots,i(\gamma,\alpha_n))=k$ contribute to the coefficient in $E^k.$ Let $C'$ be the subset of $C$ of those maximal multicurves, and let us prove that $\lambda_{\gamma}=0$ for all $\gamma\in C'.$ Note that in the Dehn-Thurston coordinates associated to the pair of pants decomposition $(\alpha_1,\ldots,\alpha_n),$ the multicurves in $C'$ differ only by their twist coordinates. If we identify the terms in $E^k$ in $\sigma(x)$, we get :
$$\sum_{\gamma \in C'} \lambda_{\gamma} F_k^{\gamma} = 0$$ Let $\delta$ be a multicurve satisfying the following conditions 
\begin{enumerate}
\item for all $1 \le j \le n$, $i(\delta,\alpha_j) = k_j$,
\item if $k_j = 0$ then the $j$-th twist coordinate of $\delta$ is trivial. 
\end{enumerate}
By Lemma \ref{lemma:fracdehn}, for each $\gamma \in C$, there exists $R_{\gamma} \in \Q(A)[Q_e^{\pm 1}]$ such that $F_k^{\gamma} = R_{\gamma} F_k^{\delta}$. It is easy to see that $\{ R_{\gamma} \, | \, \gamma \in C' \}$ are linearly independent (this follows from Lemma \ref{lemma:fracdehn} and the fact that elements in $C'$ have distinct twist coordinates). Hence from the egaility $$\sum_{\gamma \in C'} \lambda_{\gamma} R_{\gamma} F_k^{\delta} = 0,$$ we can conclude that $\lambda_{\gamma} = 0$ for all $\gamma \in C'$. An easy induction then proves that $\lambda_{\gamma}=0$ for all $\gamma \in C,$ therefore $x = 0$.
\end{proof}
We note that the embedding $\sigma$ is not surjective onto $\mathcal{A}(\Gamma).$ Indeed, as a consequence of Proposition \ref{prop:expand}-(1), the image of the morphism $\sigma$ is included in the $\Q(A)(Q_1,\ldots,Q_n)[C_1^{\pm 1},\ldots,C_b^{\pm 1}]$-subalgebra generated by elements of the form $E^k$ where $k\in \Lambda.$ In the following proposition, we show that the image of $\sigma$ is a kind of lattice in this subalgebra. Indeed the next proposition implies Proposition \ref{prop:finite_ext} in the introduction.
\begin{proposition}\label{prop:image} Let $\mathcal{F}=\Q(A)(Q_1,\ldots,Q_n)[C_1^{\pm 1},\ldots,C_b^{\pm 1}],$ then
$$\mathrm{Im}(\sigma)\underset{\mathcal{R}}{\otimes} \mathcal{F}=\underset{k\in \Lambda}{\bigoplus} \mathcal{F}E^k.$$
\end{proposition}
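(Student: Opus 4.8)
The plan is to prove the two inclusions separately. For the inclusion $\mathrm{Im}(\sigma)\otimes_{\mathcal{R}}\mathcal{F}\subseteq\bigoplus_{k\in\Lambda}\mathcal{F}E^k$, I would invoke Proposition \ref{prop:expand}-(1) exactly as in the sentence preceding the proposition: if $\gamma$ is a multicurve and $k:\mathcal{E}'\to\Z$ has $F_k^\gamma\neq 0$, then $|k(j)|\le i(\gamma,\alpha_j)$ and $k(j)\equiv i(\gamma,\alpha_j)\pmod 2$ for every $j$; since the sum $k(e)+k(f)+k(g)$ over the three edges at a trivalent vertex then has the same parity as $i(\gamma,\alpha_e)+i(\gamma,\alpha_f)+i(\gamma,\alpha_g)$, which is even because a multicurve meets a pair of pants in arcs, we get $k\in\Lambda$. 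Hence $\sigma(\gamma)\in\bigoplus_{k\in\Lambda}\mathcal{R}E^k\subseteq\bigoplus_{k\in\Lambda}\mathcal{F}E^k$, and by $\Z[A^{\pm1}]$-linearity the same holds for every element of $S(\Sigma)$; tensoring with $\mathcal{F}$ over $\mathcal{R}$ preserves this.

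For the reverse inclusion $\bigoplus_{k\in\Lambda}\mathcal{F}E^k\subseteq\mathrm{Im}(\sigma)\otimes_{\mathcal{R}}\mathcal{F}$, the key point is that it suffices to show $E^k\in\mathrm{Im}(\sigma)\otimes_{\mathcal{R}}\mathcal{F}$ for every $k\in\Lambda$, since $\mathcal{F}$-linearity of the left-hand side is then automatic. Because $\Lambda$ is a lattice and the multiplication in $\mathcal{A}(\Gamma)$ sends $\mathcal{F}E^k\cdot\mathcal{F}E^l$ into $\mathcal{F}E^{k+l}$ (the twist action $\hat{R}^{(l)}$ keeps $R$ inside $\mathcal{R}$, and the $C_i$ are central units), it is in fact enough to produce, for a generating set of $\Lambda$, an element of that form; more robustly, I would argue directly for each $k\in\Lambda$. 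Fix $k\in\Lambda$ and choose a multicurve $\gamma$ realizing the Dehn--Thurston intersection numbers $i(\gamma,\alpha_j)=|k(j)|$ with twist coordinates chosen so that the relevant extreme coefficient is controlled; such $\gamma$ exists since any admissible tuple of non-negative integers satisfying the triangle and parity conditions at each pair of pants is realized by a multicurve, and the parity conditions are exactly the condition $k\in\Lambda$. By Proposition \ref{prop:expand}-(2), the coefficient $F_{k}^{\gamma}$ with $k=(\pm i(\gamma,\alpha_1),\dots,\pm i(\gamma,\alpha_n))$ (signs matching those of $k$) is a nonzero element of $\mathcal{R}$, hence invertible in $\mathcal{F}$ (it is a nonzero element of $\mathcal{R}\subseteq\Q(A)(Q_1,\dots,Q_n)[C_i^{\pm1}]$). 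Then
$$
E^{k}=(F_{k}^{\gamma})^{-1}\Big(\sigma(\gamma)-\sum_{l\neq k}E^{l}F_{l}^{\gamma}\Big),
$$
and by Proposition \ref{prop:expand}-(1) every index $l$ occurring in the sum satisfies $|l(j)|\le |k(j)|$ with $l(j)\equiv k(j)\pmod 2$ and $l\in\Lambda$; in particular each such $l$ is "smaller" than $k$ in the partial order given by $|l|\le|k|$ componentwise. So a downward induction on $\sum_j|k(j)|$ (base case $k=0$, where $E^0=1=\sigma(\text{empty multicurve})$) shows that all the $E^{l}F_{l}^{\gamma}$ on the right, and hence $E^{k}$ itself, lie in $\mathrm{Im}(\sigma)\otimes_{\mathcal{R}}\mathcal{F}$. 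Combining the two inclusions gives the equality, and since this in particular exhibits each $E^k$ ($k\in\Lambda$) as an $\mathcal{F}$-combination of $\sigma(\gamma_j)$'s, Proposition \ref{prop:finite_ext} follows by taking $\Lambda$ as the finite-index subgroup of $\Z^n$ and clearing the $C_i$'s if one wants fractions purely in the $Q_i,C_i$.

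The step I expect to be the main obstacle is the inductive construction of a multicurve $\gamma$ with prescribed Dehn--Thurston coordinates for which Proposition \ref{prop:expand}-(2) gives a genuinely \emph{usable} leading coefficient — i.e. making sure that at each stage of the induction the "new" monomial $E^{k}$ really does appear with invertible coefficient and is not swallowed by the lower-order terms. This is essentially bookkeeping with the partial order on intersection-number vectors together with the triangle inequalities defining admissible colorings, but one must check that the realizability of $k\in\Lambda$ as intersection numbers of a multicurve does not require extra conditions beyond the parity constraints already built into $\Lambda$; this is where a careful appeal to the classification of multicurves via Dehn--Thurston coordinates is needed.
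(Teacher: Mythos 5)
The first inclusion is fine, but your reverse inclusion has a genuine gap at the heart of the induction. From
$$E^{k}=(F_{k}^{\gamma})^{-1}\Bigl(\sigma(\gamma)-\sum_{l\neq k}E^{l}F_{l}^{\gamma}\Bigr)$$
you conclude by induction on $\sum_j|k(j)|$, asserting that every $l\neq k$ appearing in the sum is ``smaller''. That is false: by Proposition \ref{prop:expand}-(2) \emph{all} the extremal indices $l=(\pm|k_1|,\ldots,\pm|k_n|)$ have $F_l^{\gamma}\neq 0$, and they satisfy $\sum_j|l(j)|=\sum_j|k(j)|$ (and $|l|\le|k|$ only non-strictly). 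So a single multicurve $\gamma$ gives you one linear relation involving $2^d$ unknown extremal monomials (where $d$ is the number of $j$ with $k_j\neq0$), none of which is covered by the induction hypothesis; you cannot solve for $E^{k}$ alone. This is not the bookkeeping issue you flag at the end (realizability of Dehn--Thurston coordinates, which is indeed guaranteed by $k\in\Lambda$); it is a missing idea. The paper's proof supplies exactly the missing ingredient: it takes the $2^d$ curves $\gamma_{\varepsilon}$, $\varepsilon\in\{0,1\}^d$, obtained from $\gamma$ by fractional twists along the $\alpha_i$, uses Lemma \ref{lemma:fracdehn} to show that the extremal coefficients of the $\gamma_\varepsilon$ differ from those of $\gamma$ by explicit monomial factors $\prod_i(-A^{2\mu_i+|k_i|}Q_i^{2\mu_i})^{\varepsilon_i}$, and observes that the resulting $2^d\times2^d$ matrix is a tensor product of invertible $2\times2$ matrices, hence invertible over $\mathcal{F}$. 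Only this system of $2^d$ relations lets one isolate each $E^{\mu|k|}$ modulo genuinely lower-order terms, after which your induction scheme applies.

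A secondary point: you justify the invertibility of $F_k^{\gamma}$ in $\mathcal{F}$ by saying it is a nonzero element of $\mathcal{R}\subseteq\Q(A)(Q_1,\ldots,Q_n)[C_1^{\pm1},\ldots,C_b^{\pm1}]$. Being nonzero does not make it a unit there, since $\mathcal{F}$ is only a Laurent polynomial ring in the $C_i$, not a field; a coefficient genuinely involving the $C_i$ (e.g.\ of the form $U(C_iQ_jQ_l^{-1})$, as occurs in the explicit formulas of Section \ref{sec:sausage_case}) need not be invertible. Any complete argument has to address which coefficients actually get inverted, which is another reason the reduction to the monomial factors of Lemma \ref{lemma:fracdehn} matters.
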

\begin{proof}
For $k\in \Lambda$ let $|k|=(|k_1|,\ldots,|k_n|).$ We will prove by induction on $|k|$ in the lexicographical order, that $E^k$ is a linear combination over $\mathcal{F}$ of symbols $\sigma(\gamma)$ of multicurves. The image of the empty multicurve settles the case $|k|=0.$ Next we note that since $k\in \Lambda,$ there is a multicurve $\gamma$ on $\Sigma$ such that $i(\gamma,\alpha_i)=|k_i|$ for any $1\leq i \leq n.$ For $\varepsilon\in \lbrace 0,1 \rbrace^n,$ let $\gamma_{\varepsilon}$ be the multicurve obtained from $\gamma$ by shifting its $i$-th twist coordinate by $\varepsilon_i$ if $\gamma$ has non-zero intersection with $\alpha_i.$ The curves $\gamma_{\varepsilon}$ all have $|k|$ geometric intersections with the curve $\lbrace \alpha_1,\ldots,\alpha_n\rbrace.$

For any $\mu \in \lbrace \pm 1\rbrace^n,$ and any $\varepsilon\in \lbrace 0,1 \rbrace^n,$ the coefficient $F_{\mu |k|}^{\gamma_{\varepsilon}}$ is non zero by Proposition \ref{prop:expand}-(2). Let us assume for simplicity that $\gamma$ has non-zero intersection with the curves $\alpha_1,\ldots,\alpha_d$ and is disjoint from the curves $\alpha_{d+1},\ldots,\alpha_n.$ By Lemma \ref{lemma:fracdehn}, we have 
$$F_{\mu |k|}^{\gamma_{\varepsilon}}=\underset{1\leq i \leq d}{\prod} (-A^{2\mu_i+|k_i|}Q_i^{2\mu_i})^{\varepsilon_i} F_{|k|}^{\gamma}$$ for any $\mu,\varepsilon.$
The matrix 
$$M=\left(\underset{1\leq i \leq d}{\prod} (-A^{2\mu_i+|k_i|}Q_i^{2\mu_i})^{\varepsilon_i} \right)_{\varepsilon \in \lbrace 0,1\rbrace^d, \mu \in \lbrace \pm 1\rbrace^d}$$ is the tensor product of matrices $\begin{pmatrix}
 1 & 1 \\ -A^{2+|k_i|}Q_i^2 & -A^{-2+|k_i|}Q_i^{-2}
\end{pmatrix}$ and therefore is invertible. Hence for any $\mu,$ there is an $\mathcal{F}$-linear combination of the multicurves $\gamma_{\varepsilon}$ such that its image by $\sigma$ has coefficient $1$ along $E^{\mu |k|}$ and zero coefficient along each other $E^{\mu'|k|}$ where $\mu'\neq \mu \in \lbrace \pm 1 \rbrace^d.$ As a result, we get an $\mathcal{F}$-linear combination of multicurves $x=\sum \lambda_{\varepsilon} \gamma_{\varepsilon}$ so that $\sigma(x)=E^{\mu |k|}$ up to lower order terms, and by induction hypothesis we can add another linear combination of multicurves to eliminate those lower order terms. 
\end{proof}
\begin{remark}\label{rk:finExt} In some sense, the embedding is analogous to the Frohman-Gelca embedding \cite{FG00} of the skein algebra of the closed torus into the quantum torus $\mathbb{Z}[A^{\pm 1}]\langle Q,E\rangle/_{QE=AEQ}.$ The image of the Frohman-Gelca embedding is the symmetric part of the quantum torus; that is elements invariant under the action of the $\mathbb{Z}[A^{\pm 1}]$-algebra automorphism $\theta : Q,E\longmapsto Q^{-1},E^{-1}.$ Here, since $\sigma(\alpha_e)=-A^2Q_e^2-A^{-2}Q_e^{-2},$ , we have that the localized quantum torus is a kind of "finite extension" of the skein algebra by Proposition \ref{prop:image}.
\end{remark}
\subsection{Proof of Proposition \ref{prop:expand}}
\label{sec:curveOpCoeff}
\begin{figure}[!h]
\centering
\def \svgwidth{.7\columnwidth}
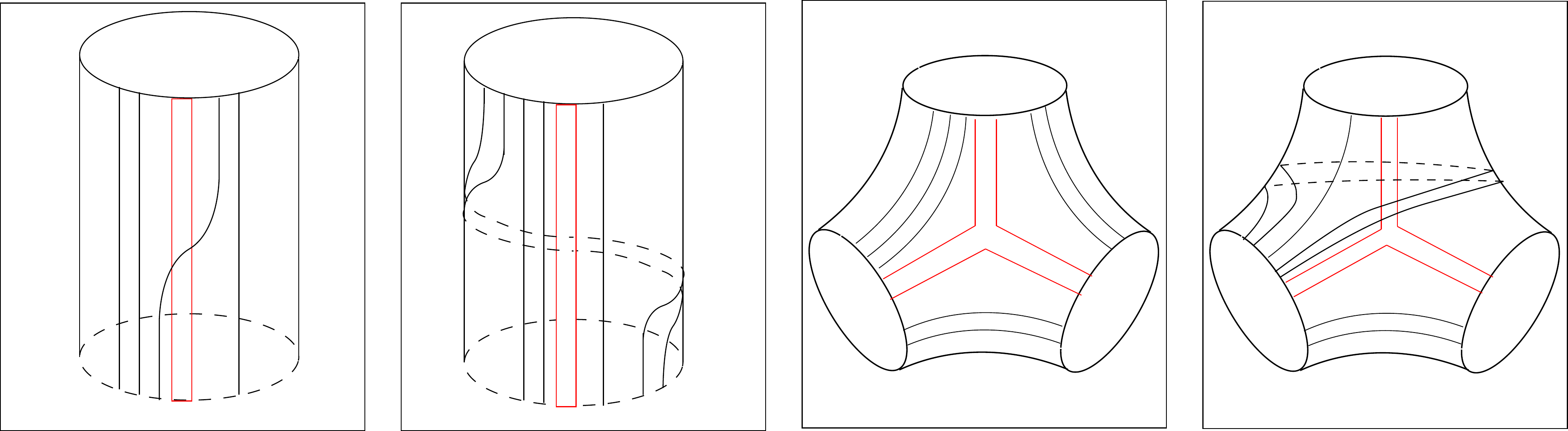
\caption{A multicurve in Dehn-Thurston position follows one of the above patterns in the pants and annuli of the decomposition.}
\label{fig:DehnThurston}
\end{figure}
\begin{figure}[h]
\centering
\def \svgwidth{.75\columnwidth}
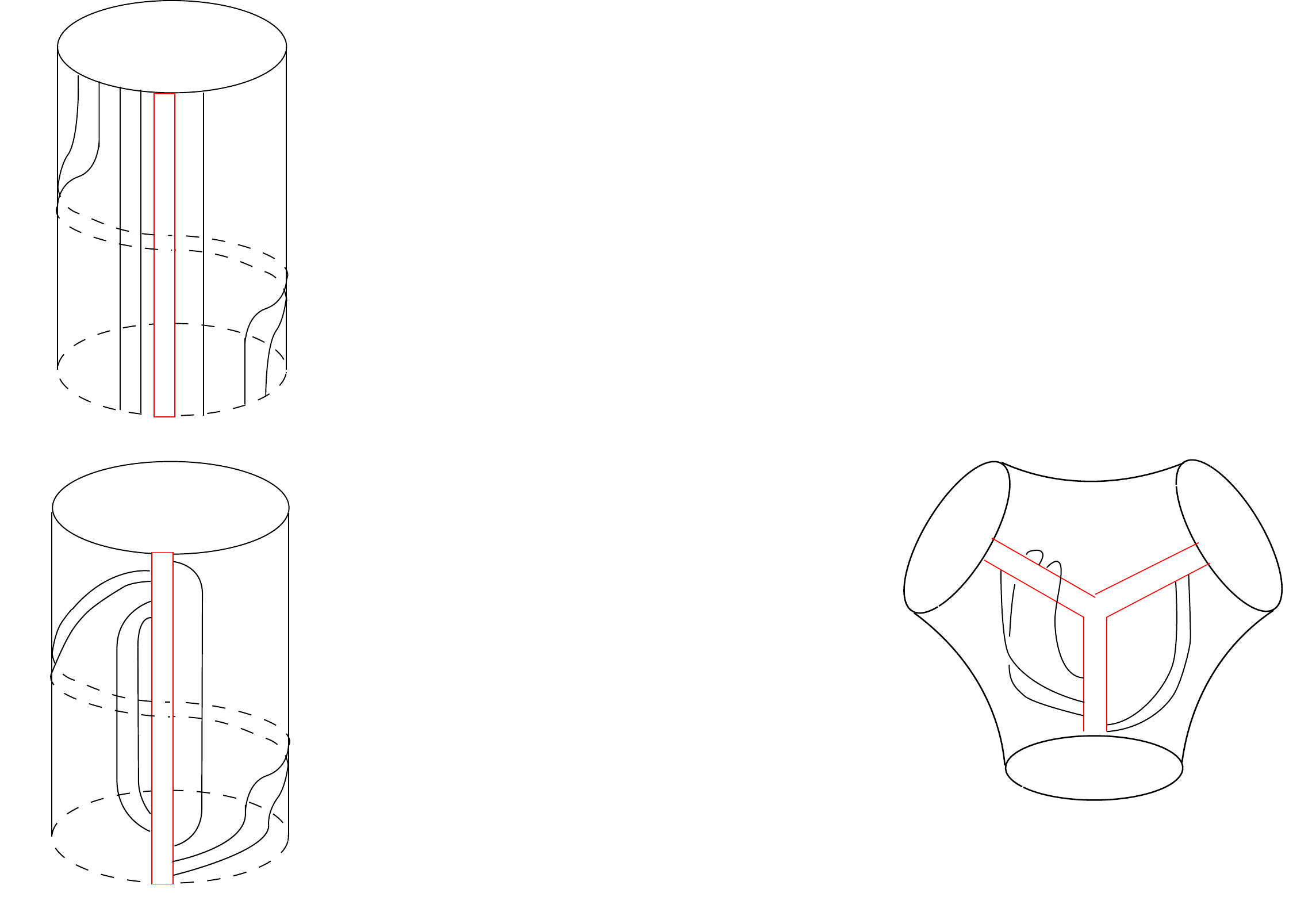
\caption{On the top the different patterns of the intersection of a multicurve (in black) with an annulus or pants piece of the decomposition. The trivalent graph $\Gamma$ is shown in red. On the bottom the remaining patterns after fusion.}
\label{fig:Patterns}
\end{figure}

In this section, we will prove Proposition \ref{prop:expand}. Let $\Sigma$ be a compact oriented surface of genus $g$ and with $b$ boundary components with negative Euler characteristic, $\mathcal{P}=\lbrace \alpha_1,\ldots,\alpha_{3g-3+b}\rbrace$ be a pair of pants decomposition of $\Sigma$ and $\Gamma$ be a trivalent banded graph dual to $\mathcal{P}.$ We also view $\Sigma$ as the boundary of a handlebody $H,$ thus vectors $\varphi_c$ associated to colorings of $\Gamma$ give a basis of $S(H,\Q(A))$ by Lemma \ref{lemma:basis}.

 Taking two parallel copies of each curve in $\mathcal{P},$ we get a decomposition of $\Sigma$ into pair of pants and annuli. We will call $\alpha_i$ and $\alpha_i'$ the two parallel copies, with no particular convention for the choice of $\alpha_i.$ Note that the pair of pants in the decomposition have boundary either the curves $\alpha_i$,$\alpha_i'$ or the boundary curves of $\Sigma.$
 
 If $\gamma$ is a multicurve on $\Sigma,$ then up to isotopy $\gamma$ can be put into Dehn-Thurston position. By this we mean that the geometric intersection number $i(\gamma,\alpha_i)$ of $\gamma$ with each curve in $\mathcal{P}$ is exactly the number of intersection points of $\gamma$ with $\alpha_i$ and also with $\alpha_i',$ and that in each pair of pants or annuli of the decomposition the curve $\gamma$ looks like one of the patterns described in Figure \ref{fig:DehnThurston}.
 
The computation of the coefficients of the operator $T^{\gamma}$ in the basis $\varphi_c$ can be done as follows. First we can remove any component $\beta$ in $\gamma$ that is parallel to a curve in $\mathcal{P}$ or a boundary curve, at the price of multiplying $\varphi_c$ by the scalar $-A^{2c_e+2}-A^{-2c_e-2}=-A^2Q_e^2-A^{-2}Q_e^{-2}$ where $e$ is the edge encircled by $\beta.$ 

Secondly we apply the first fusion rule for each intersection point of $\gamma$ with a curve $\alpha_i$ or $\alpha_i'.$ Each fusion shifts the color of the corresponding edge by $\pm 1.$ After fusion, it only remains to simplify one of the patterns described in Figure \ref{fig:Patterns} to express the coefficients of $T^{\gamma}\varphi_c$ on the basis $\lbrace \varphi_c \rbrace.$ Note that as the skein module of the sphere with two colored points is one-dimensional if the two colors agree and zero else, to obtain a non-zero vector we need that the sums of shifts at intersection points with $\alpha_i$ and $\alpha_i'$ coincide. Furthermore, the skein module of a sphere with three points colored by $c_1,c_2,c_3$ is one-dimensional if the colors satisfy the admissibility conditions $c_i\leq c_j+c_k$ and $c_1+c_2+c_3$ even, and zero-dimensional else. By the assumption that $c$ is in the set $V_{\gamma},$ that is always the case after fusion. Therefore, each of the terms we obtain after fusion at the intersection points of $\gamma$ with $\alpha_i$ and $\alpha_i'$ is just a scalar multiple of a vector $\varphi_{c+k}.$ The color shift $k_e$ at edge $e$ is the common sum of the $\pm 1$ shifts at either intersection points in $\gamma\cap \alpha_i$ or at intersection points in $\gamma\cap \alpha_i'$. This shows that $T^{\gamma}\varphi_c$ has non-zero coefficient along $\varphi_{c+k}$ only when $k \in \Lambda$ and $|k_e|\leq i(\gamma,c_e)$ for each edge $e.$

Next we claim that the coefficients are in the ring $\mathcal{R}.$ Notice that the remaining patterns after fusion at intersection points $\gamma\cap(\alpha_e\cup \alpha_e')$ shown in Figure \ref{fig:Patterns} can be reduced to remove all black arcs using the fusion rules in Figure \ref{fig:fusion}. Furthermore, all of the fusion rules in Figure \ref{fig:fusion} involve only rationals functions of $A^{c_e}$ and $A,$ where $c_e$ are the colors of edges $e\in \mathcal{E}.$ Moreover, the denominators appearing in the fusion rules are all of the form $\lbrace c_e+k \rbrace=A^{2c_e+2k}-A^{-2c_e-2k}$ where $k\in \Z.$ (We remark that the first fusion rules will shift colors of $\Gamma$ but only by a fixed amount). We also claim that if $e$ is an external edge, we will never need to use any rule involving a denominator $\lbrace c_e+k\rbrace,$ since the geometric intersection of $\gamma$ and $\alpha_e$ is zero. Those rules correspond the rules that create or erase a black arc with an endpoint on the edge $e.$ Therefore, the coefficients are in the ring $\mathcal{R}.$

It remains to be seen that the extremal coefficients are non-zero. This is a consequence of Theorem 1.3 of \cite{Det16} and Lemma 4.3 of \cite{CM12}. This first theorem studies the matrix coefficients of the action of curve operators on $\mathrm{SU}_2$ TQFT spaces of surfaces. A subset of the basis $\varphi_c$ of the skein module of the handlebody, corresponding to colors $c$ satisfying the additional "$r$-admissibility conditions" gives a basis of the $\mathrm{SU}_2$ TQFT space of the surface $(\Sigma,c_i)$ with colored points. The curve operators on $\mathrm{S}$ induce curve operators $T_r^{\gamma}$ on the TQFT spaces at level $r$, and their matrix coefficients $F_k^{\gamma,\mathrm{SU}_2}(\frac{c}{r},\frac{1}{r})$ are obtained from the coefficients of $T^{\gamma}$ in the $E^k$ by sending $A$ to a $2r$-th root of unity and applying some renormalization. Theorem 1.3 of \cite{Det16} then shows that $F_k^{\gamma,\mathrm{SU}_2}(x,0)$ is the $k$-th Fourier coefficient of the trace function $f_{\gamma} : \rho \mapsto \underset{i\in I}{\prod} (-\mathrm{Tr}(\rho(\gamma_i))$ defined on the subset $\lbrace \forall e\in \mathcal{E}', \mathrm{Tr}(\rho(\alpha_e))=2\mathrm{cos}(\pi x_e)) \rbrace$ of the $\mathrm{SU}(2)$ moduli space of $\Sigma.$ Lemma 4.3 of \cite{CM12} shows that the extremal Fourier coefficients are non-zero as long as $x$ is taken in the interior of the image of the momentum map $\rho \mapsto \left(\mathrm{arcos}(\frac{\mathrm{Tr}(\rho(\alpha_e))}{2})\right)_{e\in \mathcal{E}'}.$ This implies that the coefficients $F_k^{\gamma,\mathrm{SU}_2}$ are non-zero when $k_e=\pm i(\gamma,\alpha_e)$ for all $e\in \mathcal{E}',$ and the same is true for the coefficients $F_k^{\gamma}$.
\section{Localized quantum torus associated to a sausage graph}
\label{sec:sausage_case}
In this section, as well as the remaining of the paper, we introduce the notation $U(x)=x-x^{-1}$ when $x$ is an invertible element in a ring.
Let $\Sigma$ be a surface with at most one boundary component with negative Euler characteristic. Consider the pants decomposition of $\Sigma$ as in Figure \ref{fig_sausage}. 
\begin{figure}[h!] 
\centering
\includegraphics[scale = 0.35]{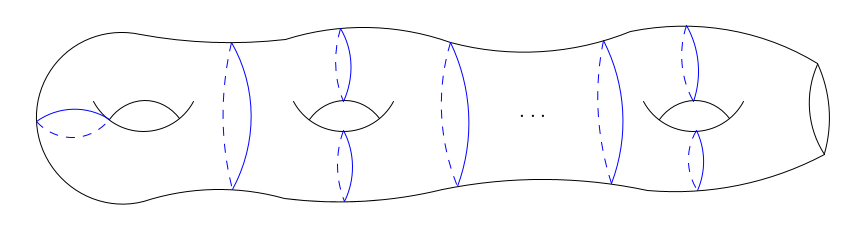}
\caption{When $\partial \Sigma = \emptyset$, the two rightmost curves coincide.}
\label{fig_sausage}
\end{figure}
Let $\Gamma$ be the graph dual to this pants decomposition :
$$\includegraphics[scale = 0.45]{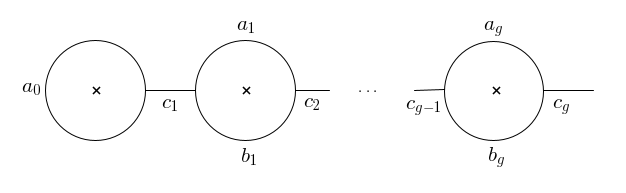}$$
When $\partial \Sigma = \emptyset$, $a_{g-1} = b_{g-1}$ and $c_g$ does not exist. Finally let 
$\beta_1,\ldots,\beta_g,\gamma_1,\ldots,\gamma_{g-1}$ be the curves shown in Figure \ref{fig_beta_gamma}.
\begin{figure}[h!] 
\centering
\includegraphics[scale = 0.45]{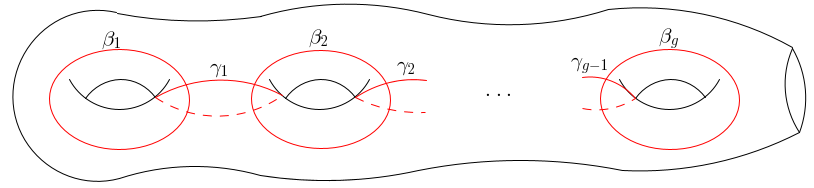}
\caption{The $\beta$ and $\gamma$ curves}
\label{fig_beta_gamma}
\end{figure}

For $k : \mathcal{E}' \to \Z$, we define $E^k$ to be $\prod_{e \in \mathcal{E}'} E_e^{k(e)}$ and $\Lambda$ be the set of maps $k : \mathcal{E}' \to \Z$ such that if $e_1,e_2,e_3 \in E(\Gamma)$ meet at a vertex then $k(e_1)+k(e_2)+k(e_3)$ is even. 

\begin{definition} \label{A0}
Let $\mathcal{R}^0$ be the set of Laurent polynomial with coefficients in $Z[A^{ \pm 1}]$ in the variables $Q_{a_0}^2,Q_{a_1} Q_{b_1},  Q_{a_1} Q_{b_1}^{-1}, \ldots, Q_{a_{g-1}} Q_{b_{g-1}},  Q_{a_{g-1}} Q_{b_{g-1}}^{-1}, Q_{c_1},\ldots,Q_{c_g}$. We define $\mathcal{A}(\Gamma)^0$ be the sub-algebra of $\mathcal{A}(\Gamma)$ defined by the set of 
$$\sum_{k \in \Lambda} E^k F_k$$ where $F_k = V/W$ with a $V \in \mathcal{R}^0$ and $W$ is a finite (possibly empty) product of $A^n Q_{\alpha}^2-A^{-n} Q_{\alpha}^{-2}$ for $n \in \Z$ and $\alpha \in \mathcal{P}$.
\end{definition}
We define also the \textit{non localized} version $\mathcal{A}(\Gamma)^0$ :
$$\mathcal{T}(\Gamma)^0 = \left\{ \sum_{k \in \Lambda} E^k F_k \, ; \, F_k \in \mathcal{R}^0 \right\} \subset \mathcal{A}(\Gamma)^0$$

\begin{lemma} \label{gen_t}
$\mathcal{T}(\Gamma)^0$ is generated by the following sets \begin{align*} \mathcal{X}= & \{Q_{a_0}^2,Q_{a_1} Q_{b_1},  Q_{a_1} Q_{b_1}^{-1}, \ldots, Q_{a_{g-1}} Q_{b_{g-1}},  Q_{a_{g-1}} Q_{b_{g-1}}^{-1}, Q_{c_1},\ldots,Q_{c_{g-1}} \} \\ 
\mathcal{Y} = & \{E_{a_0},E_{a_1} E_{b_1},  E_{a_1} E_{b_1}^{-1}, \ldots, E_{a_{g-1}} E_{b_{g-1}},  E_{a_{g-1}} E_{b_{g-1}}^{-1}, E_{c_1}^2,\ldots,E_{c_{g-1}}^2 \} \\
\mathcal{Z} = &  \{Q_{c_g} \}  \end{align*} and the inverses of the elements of these sets.
\end{lemma}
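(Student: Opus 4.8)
The plan is to prove the two inclusions between $\mathcal{T}(\Gamma)^0$ and the $\Z[A^{\pm 1}]$-subalgebra $B\subseteq\mathcal{A}(\Gamma)^0$ generated by $\mathcal{X}\cup\mathcal{Y}\cup\mathcal{Z}$ and the inverses of these elements. The inclusion $B\subseteq\mathcal{T}(\Gamma)^0$ is the routine one: since $\mathcal{T}(\Gamma)^0$ is a subalgebra of $\mathcal{A}(\Gamma)^0$ — immediate from its definition, $\Lambda$ being a subgroup of $\Z^{\mathcal{E}'}$ and $\mathcal{R}^0$ (and its localization) being stable under multiplication and under the substitutions $Q_e\mapsto A^{k_e}Q_e$ — it suffices to check that each listed generator and its inverse lies in $\mathcal{T}(\Gamma)^0$. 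Every element of $\mathcal{X}\cup\mathcal{Z}$ is one of the defining monomial variables of the Laurent ring $\mathcal{R}^0$, hence it and its inverse lie in $\mathcal{R}^0\cdot E^0\subseteq\mathcal{T}(\Gamma)^0$. Every element of $\mathcal{Y}$ is of the form $E^k$ for its exponent vector $k$, and I would check $k\in\Lambda$ by a direct parity count at each trivalent vertex of $\Gamma$: a coordinate supported on a loop edge contributes $2k(\text{loop})$ to the vertex sum; the vector $\mathbf 1_{a_i}\pm\mathbf 1_{b_i}$ contributes $1\pm 1\in\{0,2\}$ at each vertex meeting both $a_i$ and $b_i$; $2\mathbf 1_{c_i}$ contributes $2k(c_i)$ at each of its vertices; all remaining vertices get $0$. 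As $\Lambda$ is a group, $-k\in\Lambda$ too, so the inverses $E^{-k}$ also lie in $\mathcal{T}(\Gamma)^0$.

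For the reverse inclusion the heart of the matter is the lattice identity
\[
\Lambda=\big\langle\,\mathbf 1_{a_0},\ \mathbf 1_{a_i}+\mathbf 1_{b_i},\ \mathbf 1_{a_i}-\mathbf 1_{b_i}\ (1\le i\le g-1),\ 2\mathbf 1_{c_i}\ (1\le i\le g-1)\,\big\rangle,
\]
that is, $\Lambda$ is generated as a subgroup of $\Z^{\mathcal{E}'}$ by the exponent vectors of the elements of $\mathcal{Y}$. Here I would use the explicit shape of the sausage graph of Figure~\ref{fig_sausage}: reading off the constraint $k(e_1)+k(e_2)+k(e_3)\in 2\Z$ at each trivalent vertex, one sees that the system decouples handle by handle. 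The coordinate $k(a_0)$, attached to a loop edge, is unconstrained; for each bubble pair $\{a_i,b_i\}$ the only surviving condition is $k(a_i)+k(b_i)\in 2\Z$, whose solution lattice in $\Z^{\{a_i,b_i\}}$ is $\Z(\mathbf 1_{a_i}+\mathbf 1_{b_i})\oplus\Z(\mathbf 1_{a_i}-\mathbf 1_{b_i})$; and for each connecting edge $c_i$, which meets a loop vertex or a bubble vertex, the only condition is $k(c_i)\in 2\Z$. In the closed case the last handle degenerates to a single loop edge $a_{g-1}=b_{g-1}$, with $k(a_{g-1})$ then unconstrained (so $\mathbf 1_{a_{g-1}}$ is the relevant $\mathcal{Y}$-generator) and $c_g$ absent; the argument is otherwise identical. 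Adding up these blocks yields the displayed identity.

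Granting this, $\mathcal{T}(\Gamma)^0\subseteq B$ follows. Take $x=\sum_{k\in\Lambda}E^kF_k\in\mathcal{T}(\Gamma)^0$ with $F_k\in\mathcal{R}^0$; I claim each summand lies in $B$. First, $\mathcal{R}^0$ is by definition the ring of Laurent polynomials over $\Z[A^{\pm 1}]$ in precisely the variables occurring in $\mathcal{X}\cup\mathcal{Z}$, hence it is the $\Z[A^{\pm 1}]$-subalgebra generated by $\mathcal{X}\cup\mathcal{Z}$ and the inverses of those elements; so $F_k\in B$. Second, writing $k$ as an integer combination of the $\mathcal{Y}$-exponent vectors (possible by the lattice identity) and using that all the $E_e$ commute with one another in $\mathcal{A}(\Gamma)$, one obtains $E^k$ as a product of elements of $\mathcal{Y}$ and their inverses, so $E^k\in B$. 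Therefore $E^kF_k\in B$, and $x\in B$, which finishes the proof.

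I expect the one genuinely non-formal step to be the lattice identity for $\Lambda$: it requires pinning down the combinatorics of the sausage graph (which edges are loops, which form bubbles, how the connecting edges sit) precisely enough to see that the vertex-parity conditions decouple exactly into the blocks above, and to treat the degeneration in the closed case where one loop edge replaces a bubble. Everything else — the parity membership checks, the description of $\mathcal{R}^0$ by generators, and the reassembly of a general element — is bookkeeping; in particular the non-commutativity $Q_eE_e=AE_eQ_e$ of $\mathcal{A}(\Gamma)$ never obstructs the argument, since it only ever produces invertible scalars $A^m\in\Z[A^{\pm 1}]$.
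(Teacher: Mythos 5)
Your proof is correct, and in fact the paper states Lemma \ref{gen_t} without any proof at all, so you have supplied the missing argument rather than paralleled an existing one. The two inclusions are verified exactly as one would want: the parity checks at each trivalent vertex show the exponent vectors of $\mathcal{Y}$ lie in $\Lambda$, and the block decomposition of the vertex conditions on the sausage graph (with $\{(m,n): m+n\in 2\Z\}=\Z(1,1)\oplus\Z(1,-1)$ for each bubble, $k(c_i)\in 2\Z$ forced by the adjacent loop or bubble, and $k(a_0)$ free) gives the key lattice identity. One point worth flagging, which you handle correctly but which is really a defect of the lemma's statement: in the closed case the literal generators $E_{a_{g-1}}E_{b_{g-1}}$ and $E_{a_{g-1}}E_{b_{g-1}}^{-1}$ degenerate to $E_{a_{g-1}}^2$ and $1$, which would not generate the unconstrained coordinate $\mathbf{1}_{a_{g-1}}\in\Lambda$; your reading that the degenerate loop contributes $E_{a_{g-1}}$ itself (mirroring $E_{a_0}$) is the intended one, and is confirmed by the count of $3g-3$ Weyl-pair factors in the proof of Proposition \ref{rep_Q_t}.
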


\noindent \textbf{Convention :} From now on, we will use the same symbol for an edge of $\Gamma$ and the unique curve in $\mathcal{P}$ encircling it.
\\

Recall that $\sigma : S(\Sigma) \to \mathcal{A}(\Gamma)$ is an embedding. We see that for $e \in \mathcal{P}$ $$\sigma(e) = -(A^2 Q_e^2 + A^{-2} Q_e^{-2})$$ Let us give an explicit expression of $\sigma(\gamma)$ for $\gamma \in \{\beta_1,\ldots,\beta_g,\gamma_1,\ldots,\gamma_{g-1}\}$. There are three local configurations for $\gamma$ as shown in Figure \ref{fig_cycle}.
\begin{center}
\begin{figure} 
\centering
\subfigure[One-cycle \qquad]{\includegraphics[width=3cm]{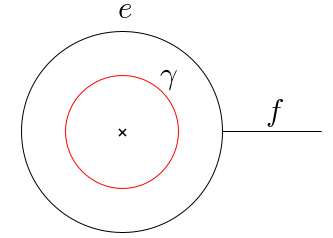}} 
\qquad
\subfigure[Two cycle]{\includegraphics[width=3.5cm]{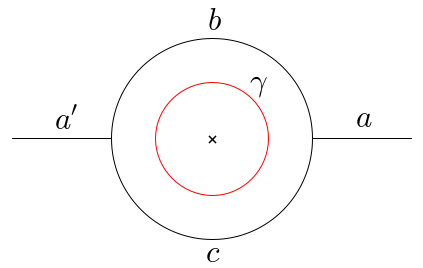}} 
\qquad
\subfigure[Separating edge curve]{\includegraphics[width=4cm]{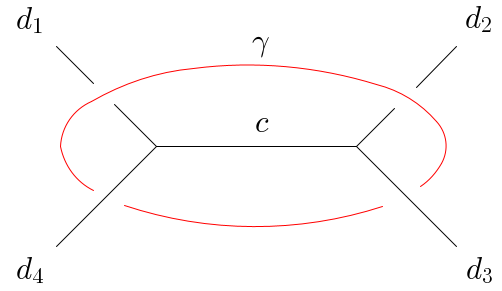}} 
\qquad
\caption{Three configurations for the curve $\gamma$}
\label{fig_cycle}
\end{figure}
\end{center}
Suppose that $\gamma$ is a one-cycle as in Figure \ref{fig_cycle} (a). The fusion rules say that : 
\begin{equation} \label{emb_1_cycle} \sigma(\gamma) = E_e+E_e^{-1}F  \end{equation} where 
$$F = U(A^2 Q_e^2 Q_f)U(Q_e^2 Q_f^{-1})U(A^2 Q_e^2)^{-1}U(Q_e^2)^{-1}$$
and we recall the notation $U(x)=x-x^{-1}.$

Suppose that $\gamma$ is a two-cycle as in Figure \ref{fig_cycle} (b). The fusion rules say that : 
\begin{equation} \label{emb_2_cycle} \sigma(\gamma) = E_b E_b +E_b E_c^{-1} F_{1,-1}+E_b^{-1} E_c F_{-1,1} + E_b^{-1} E_c^{-1} F_{-1,-1} 
\end{equation}

where 
\begin{align*}
F_{-1,1} & = -\dfrac{U(Q_{a'} Q_{b} Q_{c}^{-1}) U(Q_{a} Q_{b} Q_{c}^{-1})}{U(A^2 Q_{b}^2)U(Q_{b}^2)}  \\
F_{1,-1} & = -\dfrac{U(Q_{a'} Q_{c} Q_{b}^{-1}) U(Q_{a} Q_{c} Q_{b}^{-1})}{U(A^2 Q_{c}^2)U(Q_{c}^2)}  \\
F_{-1,-1} & = \dfrac{U(A^2 Q_{a'} Q_{c} Q_{b}) U(A^2 Q_{a} Q_{c} Q_{b}) U(Q_{b} Q_{c} Q_{a'}^{-1})U(Q_{b} Q_{c} Q_{a}^{-1})}{U(A^2 Q_{c}^2)U(Q_{c}^2)U(A^2 Q_{b}^2)U(Q_{b}^2)} 
\end{align*}

Finally suppose that $\gamma$ is a separating edge curve as in Figure \ref{fig_cycle} (c), we have : 
\begin{equation} \label{emb_sep} \sigma(\gamma) = E_c^2 G_2+ G_0+ E_c^{-2} G_{-2}
\end{equation} where 
\begin{align*}
G_0  = & \frac{(d_1 d_3 +d_2 d_4)c +(A^2+A^{-2})(d_1 d_2 + d_3 d_4)}{U(Q_c^2) U(A^4 Q_c^2)}\\
& \\
G_{-2}  =&  -\dfrac{U(A^2 Q_{d_1}Q_{d_4} Q_c)U(Q_{d_1}Q_c Q_{d_4}^{-1})U(Q_{d_4}Q_c Q_{d_1}^{-1})}{U(A^{-2}Q_c^2) U(Q_c^2)^2 U(A^2 Q_c^2)} \\
 & \times U(A^2 Q_{d_2}Q_{d_3} Q_c)U(Q_{d_2}Q_c Q_{d_3}^{-1})U(Q_{d_3}Q_c Q_{d_2}^{-1})\\
& \\
G_2  = & -U(Q_{d_1} Q_{d_4} Q_{c}^{-1}) U(Q_{d_2} Q_{d_3} Q_{c}^{-1}) 
\end{align*}
with $d_j = -A^2 Q_{d_j}^2-A^{-2} Q_{d_j}^{-2}$ for $j \in \{1,2,3,4\}$.
\begin{proposition}
If $\xi$ is a non zero complex number such that $\xi^4 \neq 1$ then the image $\sigma_{\xi} : S_{\xi}(\Sigma) \to \mathcal{A}_{\xi}(\Gamma)$ lies in $\mathcal{A}_{\xi}(\Gamma)^0$.
\end{proposition}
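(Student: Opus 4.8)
The plan is to reduce the statement to a check on a finite generating set of $S_{\xi}(\Sigma)$. Since $\mathcal{A}(\Gamma)^{0}$ is a $\Z[A^{\pm 1}]$-subalgebra of $\mathcal{A}(\Gamma)$ (Definition \ref{A0}) and $\sigma_{\xi}$ is the specialization at $A=\xi$ of the algebra morphism $\sigma$, it suffices to exhibit a set $\mathcal{G}$ of simple closed curves that generates $S_{\xi}(\Sigma)$ as a $\C$-algebra and such that $\sigma(c)\in\mathcal{A}(\Gamma)^{0}$ for every $c\in\mathcal{G}$; then $\sigma_{\xi}(S_{\xi}(\Sigma))\subset\mathcal{A}(\Gamma)^{0}\otimes_{A=\xi}\C=\mathcal{A}_{\xi}(\Gamma)^{0}$. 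Note that we do \emph{not} claim that $\sigma(S(\Sigma))\subset\mathcal{A}(\Gamma)^{0}$ over $\Z[A^{\pm1}]$, only its image after specialization. I would take $\mathcal{G}=\mathcal{P}\cup\{\beta_{1},\dots,\beta_{g},\gamma_{1},\dots,\gamma_{g-1}\}$, the curves attached to the sausage decomposition of Figure \ref{fig_sausage}, for which $\sigma$ has already been computed explicitly.

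The step requiring the hypothesis $\xi^{4}\neq 1$ is that $\mathcal{G}$ generates $S_{\xi}(\Sigma)$ over $\C$. I would argue by induction on the total geometric intersection number $\sum_{e\in\mathcal{E}'}i(c,\alpha_{e})$ of a simple closed curve $c$ with the curves of $\mathcal{P}$. When it is $0$, the curve $c$ is isotopic to a curve of $\mathcal{P}$. Otherwise, after putting $c$ in Dehn--Thurston position with respect to $\mathcal{P}$ (Figure \ref{fig:DehnThurston}) and resolving crossings via the Kauffman relations, one writes $c$, modulo curves of strictly smaller total intersection, in terms of products of curves of $\mathcal{G}$ and of curves obtained from $c$ by a fractional Dehn twist along some $\alpha_{e}$; exactly as in the proof of Lemma \ref{lemma:fracdehn}, these twisted curves are recovered from the products $\alpha_{e}c$ and $c\alpha_{e}$ by inverting a $2\times 2$ matrix whose determinant is a multiple of $A^{2}-A^{-2}$, which is invertible in $\C$ precisely because $\xi^{4}\neq 1$. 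Over $\Z[A^{\pm1}]$ this fails, as one would then need the fractional twist curves as additional generators, which is why the statement is only made after specialization. Carrying out this reduction cleanly, and arranging it so that only the quantum integer $A^{2}-A^{-2}$ ever has to be inverted, is the main obstacle in the proof.

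It then remains to check that $\sigma(c)\in\mathcal{A}(\Gamma)^{0}$ for each $c\in\mathcal{G}$, a direct inspection of formulas already recorded before the proposition. For $c\in\mathcal{P}$ we have $\sigma(c)=-(A^{2}Q_{c}^{2}+A^{-2}Q_{c}^{-2})$, and $Q_{c}^{2}\in\mathcal{R}^{0}$: this is immediate for $c\in\{a_{0},c_{1},\dots,c_{g}\}$, and for $c=a_{i}$ or $b_{i}$ it follows from $Q_{a_{i}}^{2}=(Q_{a_{i}}Q_{b_{i}})(Q_{a_{i}}Q_{b_{i}}^{-1})$ and $Q_{b_{i}}^{2}=(Q_{a_{i}}Q_{b_{i}})(Q_{a_{i}}Q_{b_{i}}^{-1})^{-1}$. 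For $c=\beta_{i}$ or $\gamma_{j}$, $\sigma(c)$ is given by \eqref{emb_1_cycle}, \eqref{emb_2_cycle} or \eqref{emb_sep} according to which of the three local configurations of Figure \ref{fig_cycle} occurs; matching the edges named there ($e,f$, or $a,a',b,c$, or $d_{1},\dots,d_{4},c$) with the edges of the sausage graph $\Gamma$, one checks that the $E$-monomials appearing lie in the subgroup generated by $\mathcal{Y}$, that the numerators are Laurent polynomials in the elements of $\mathcal{X}\cup\mathcal{Z}$ (again using $Q_{a_{i}}^{2},Q_{b_{i}}^{2}\in\mathcal{R}^{0}$), and that each denominator is a product of terms of the form $A^{n}Q_{\alpha}^{2}-A^{-n}Q_{\alpha}^{-2}$. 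By Lemma \ref{gen_t} and Definition \ref{A0} this gives $\sigma(\beta_{i}),\sigma(\gamma_{j})\in\mathcal{A}(\Gamma)^{0}$, and specializing at $A=\xi$ completes the argument.
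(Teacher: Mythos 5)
Your overall route is the one the paper takes: reduce to the finite set $\mathcal{G}=\mathcal{P}\cup\{\beta_1,\dots,\beta_g,\gamma_1,\dots,\gamma_{g-1}\}$ and verify membership in $\mathcal{A}(\Gamma)^0$ on the explicit formulas (\ref{emb_1_cycle}), (\ref{emb_2_cycle}), (\ref{emb_sep}); that part of your argument, including the observation $Q_{a_i}^2=(Q_{a_i}Q_{b_i})(Q_{a_i}Q_{b_i}^{-1})$, is fine and matches the paper's level of detail. The one place where you diverge is the assertion that $\mathcal{G}$ generates $S_{\xi}(\Sigma)$: the paper does not prove this, it invokes \cite[Thm 1.1]{San}, whose hypothesis is exactly that the Dehn twists along the curves of $\mathcal{G}$ generate the mapping class group of $\Sigma$.

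Your sketched replacement for that citation has a genuine gap. In the inductive step you recover the fractionally twisted curves $c_\pm$ from $\alpha_e c$ and $c\alpha_e$ by inverting a $2\times2$ matrix, but (as in Lemma \ref{lemma:fracdehn}) that matrix is $\begin{pmatrix} A^{i} & A^{-i}\\ A^{-i} & A^{i}\end{pmatrix}$ with $i=i(c,\alpha_e)$, whose determinant is $A^{2i}-A^{-2i}$, not $A^{2}-A^{-2}$. Being a multiple of the invertible element $\xi^{2}-\xi^{-2}$ does not make it invertible: for $\xi$ a root of unity of order $>4$ (which is precisely the regime the rest of the paper needs, $\xi$ a primitive $2p$-th root of unity) the factor $\xi^{2i}-\xi^{-2i}$ vanishes whenever $p\mid 2i$. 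Arranging the induction so that one only ever twists along curves met exactly once — which is what would confine the needed inversions to $A^4-1$ — is the actual content of \cite{San}, and you flag it as "the main obstacle" without resolving it. Moreover the induction as stated does not obviously terminate, since $c_\pm$ have the same total intersection number with $\mathcal{P}$ as $c$. The clean fix is simply to cite \cite[Thm 1.1]{San} for the generation statement, after noting that the Dehn twists along $\mathcal{G}$ generate the mapping class group; with that substitution your proof coincides with the paper's.
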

\begin{proof}
Let $\alpha \in \mathcal{P}$, $\sigma_{\xi}(\alpha) = -(\xi^2 Q_e^2+\xi^{-2} Q_e^{-2})$ for some edge $e \in E(\Gamma)$, therefore $\sigma_{\xi}(\alpha) \in  \mathcal{A}_{\xi}(\Gamma)^0$. Also $\sigma_{\xi}(\gamma) \in  \mathcal{A}_{\xi}(\Gamma)^0$ for any $\gamma \in \{\beta_1,\ldots,\beta_g,\gamma_1,\ldots,\gamma_{g-1}\}$ according to Formulas \ref{emb_1_cycle},\ref{emb_2_cycle} and \ref{emb_sep}. Since the set of Dehn twists associated to the curves in $\mathcal{P}\cup \{\beta_1,\ldots,\beta_g,\gamma_1,\ldots,\gamma_{g-1}\}$ generate the mapping class group of $\Sigma$, the set $\mathcal{P}\cup \{\beta_1,\ldots,\beta_g,\gamma_1,\ldots,\gamma_{g-1}\}$ generate $S_{\xi}(\Sigma)$ by \cite[Thm 1.1]{San}. We can then conclude.
\end{proof}

\begin{figure}[t] 
\centering
\includegraphics[scale = 0.35]{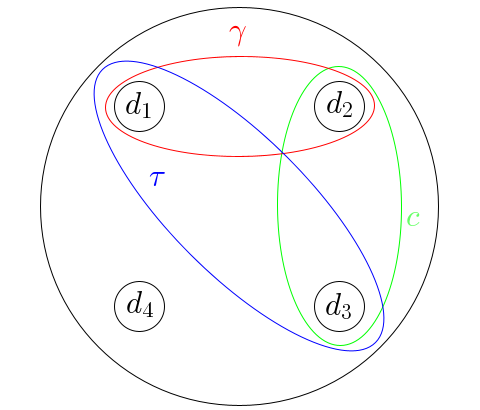}
\caption{The curve $\tau$}
\label{fig_tau}
\end{figure}
The next proposition is an explicit version of Proposition \ref{prop:finite_ext}.
\begin{proposition}
Let $\gamma \in \{\beta_1,\ldots,\beta_g,\gamma_1,\ldots,\gamma_{g-1}\}$. If $\gamma$ is a one-cycle as in Figure \ref{fig_cycle} (a), then 
\begin{equation} \label{1_cycle_lift} \left\{
    \begin{array}{llll}
E_e & = - \Big(t_{e}(\gamma)+A^{-1}\gamma Q_e^{-2}\Big)A^{-1} U(A^2 Q_e^2)^{-1}  \\
E_e^{-1} & =\quad \Big(A^3 \gamma Q_e^2+t_{e}(\gamma) \Big)A^{-1} U(A^2 Q_e^2)^{-1}F^{-1}  \end{array}
\right.
\end{equation} where $F$ was defined in Equation (\ref{emb_1_cycle}). If $\gamma$ is a two-cycle as in Figure \ref{fig_cycle} (b), then 

\begin{align}
&E_b E_c  = \Big( \sigma(\gamma) A^{-2} Q_b^{-2} Q_c^{-2}+ \sigma(t_b(\gamma)) A^{-1} Q_c^{-2} + \sigma(t_c(\gamma)) A^{-1} Q_b^{-2} +\sigma(t_b t_c(\gamma)) \Big) D^{-1}  \label{2_cycle_lift1}
\\
&E_b E_c^{-1} F_{1,-1} = -\Big( \sigma(\gamma) A^{2} Q_b^{-2} Q_c^{2}+ \sigma(t_b(\gamma)) A^{3} Q_c^{2} + \sigma(t_c(\gamma)) A^{-1} Q_b^{-2} +\sigma(t_b t_c(\gamma)) \Big) D^{-1} \label{2_cycle_lift2}
\\
&E_b^{-1} E_c F_{-1,1} =  -\Big( \sigma(\gamma) A^{2} Q_b^{2} Q_c^{-2}+ \sigma(t_b(\gamma)) A^{-1} Q_c^{-2} + \sigma(t_c(\gamma)) A^{3} Q_b^{2} +\sigma(t_b t_c(\gamma)) \Big) D^{-1} \label{2_cycle_lift3}
\\
&E_b^{-1} E_c^{-1}F_{-1,-1} = \Big( \sigma(\gamma) A^{6} Q_b^{2} Q_c^{2}+ \sigma(t_b(\gamma)) A^{3} Q_c^{2} + \sigma(t_c(\gamma)) A^{3} Q_b^{2} +\sigma(t_b t_c(\gamma)) \Big) D^{-1} \label{2_cycle_lift4}
\end{align}
where $D = A^2 U(A^2 Q_c^2) U(A^2 Q_b^2)$ and where $F_{-1,1},F_{1,-1},F_{-1,-1}$ were defined in Equation (\ref{emb_2_cycle}). Finally if $\gamma$ is a separating edge curve as in Figure \ref{fig_cycle} (c), then
\begin{align}
& E_c^2 G_2  = -\left[\gamma Q_c^{-2} + \tau - \dfrac{A^{-2} \delta_1 Q_c^{-2}-A^2 \delta_2}{U(A^4 Q_c^2)}\right] A^{-2} U(A^2 Q_c^2)^{-1}        \label{sep_lift1} \\
& E_c^{-2} G_{-2}  = \left[\gamma A^2 Q_c^2 + A^{-2} \tau + \dfrac{ \delta_1 Q_c^2-\delta_2}{U(Q_c^2)}\right]  U(A^2 Q_c^2)^{-1}  \label{sep_lift2}
\end{align}
where $\delta_1 = d_1 d_3+d_2 d_4$, $\delta_2 = d_1 d_2+d_3 d_4$ and $\tau$ is as in Figure \ref{fig_tau}.
\end{proposition}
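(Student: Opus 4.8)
The strategy is to invert, in each of the three local configurations, the formulas \eqref{emb_1_cycle}, \eqref{emb_2_cycle} and \eqref{emb_sep} expressing $\sigma(\gamma)$ in terms of the $E$-variables, together with the analogous formulas obtained by twisting $\gamma$ along the curves of $\mathcal{P}$ bounding the relevant pair(s) of pants. The key point, already encoded in Lemma \ref{lemma:fracdehn}, is that applying a positive fractional twist $t_e$ multiplies the coefficient $F_k^\gamma$ along an extremal $E^k$ by $-A^{2\varepsilon+|k_j|}Q_j^{2\varepsilon}$; hence the various twisted curves give a linear system, with a Vandermonde-like (in fact tensor-product) coefficient matrix, whose unknowns are exactly the monomials $E^k$ (times their $F_k$) appearing in $\sigma(\gamma)$. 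Solving this system expresses each $E^k F_k$ as an $\mathcal{R}$-linear combination of $\sigma$ of the twisted multicurves, which is precisely the content of \eqref{1_cycle_lift}, \eqref{2_cycle_lift1}--\eqref{2_cycle_lift4} and \eqref{sep_lift1}--\eqref{sep_lift2}.

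Concretely, first I would treat the one-cycle case. Here $\sigma(\gamma)=E_e+E_e^{-1}F$ has only two terms, so I need one more equation: computing $\sigma(t_e(\gamma))$ via the fusion rules (or directly via Lemma \ref{lemma:fracdehn} with $k_j=\pm 1$) gives $\sigma(t_e(\gamma)) = A E_e Q_e^2 \cdot(\text{sign/power corrections}) - \dots$; the $2\times 2$ system in $(E_e, E_e^{-1}F)$ has determinant a unit times $U(A^2Q_e^2)$, which is invertible in $\mathcal{R}$, and inverting it yields \eqref{1_cycle_lift} after collecting the powers of $A$ and $Q_e$. Next, for the two-cycle case, $\sigma(\gamma)$ has four terms indexed by $(\pm 1,\pm 1)\in\{\pm1\}^2$ on the two edges $b,c$; I would apply the twists $t_b$, $t_c$ and $t_bt_c$, getting four equations whose coefficient matrix is the tensor product $\begin{pmatrix} 1 & 1\\ -A^{2+|k_b|}Q_b^2 & -A^{-2+|k_b|}Q_b^{-2}\end{pmatrix}\otimes\begin{pmatrix} 1 & 1\\ -A^{2+|k_c|}Q_c^2 & -A^{-2+|k_c|}Q_c^{-2}\end{pmatrix}$ — invertible over $\mathcal{R}$ with determinant a unit times $U(A^2Q_b^2)U(A^2Q_c^2)$, exactly the $D$ in the statement. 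Inverting this tensor product (a $2\times2$ inversion twice) produces \eqref{2_cycle_lift1}--\eqref{2_cycle_lift4}. Finally, for the separating edge curve, $\sigma(\gamma)=E_c^2G_2+G_0+E_c^{-2}G_{-2}$ has shifts $0,\pm 2$ along the single edge $c$; here twisting by $t_c$ shifts each $E_c^{\pm 2}$ coefficient by a power of $A^2Q_c^2$ (since $|k_c|=2$), and I would also use the auxiliary curve $\tau$ of Figure \ref{fig_tau}, which is obtained from $\gamma$ by a half-twist and supplies the third independent equation needed to solve for $(E_c^2G_2, G_0, E_c^{-2}G_{-2})$. The constant term $G_0$ gets absorbed using its explicit expression in terms of $\delta_1=d_1d_3+d_2d_4$, $\delta_2=d_1d_2+d_3d_4$ and the $d_j$; solving the resulting $3$-term system and simplifying the $A$- and $Q_c$-powers gives \eqref{sep_lift1}--\eqref{sep_lift2}.

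In every case the verification reduces to: (i) recording the correct fusion-rule output for $\sigma$ of each twisted curve, which is mechanical given Figure \ref{fig:fusion} and the explicit formulas already displayed; (ii) checking that the relevant square matrix (a $2\times 2$, a $2\times2\otimes 2\times2$, or a $3\times 3$) is invertible over $\mathcal{R}$, which holds because its determinant is, up to a power of $A$, a product of factors $U(A^mQ_e^2)$ that are invertible in $\mathcal{R}$ by construction of the localized quantum torus; and (iii) doing the linear-algebra inversion and matching powers of $A$ and of the $Q$'s. I expect the main obstacle to be purely bookkeeping: correctly tracking the shifts $Q_e^2E^k = A^{2k_e}E^kQ_e^2$ when one clears denominators and the precise $A$-powers coming from the fusion rules, so that the solved formulas come out in the exact normalized form stated. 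There is no conceptual difficulty — the structure is dictated by Lemma \ref{lemma:fracdehn} and the tensor-product invertibility already used in the proof of Proposition \ref{prop:image} — but the separating case, with its extra curve $\tau$ and the interaction with $G_0$, will require the most care.
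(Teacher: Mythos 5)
Your plan is correct and matches the paper's proof: in each configuration the paper writes down $\sigma$ of $\gamma$ and of its (fractional) twists along the relevant pants curves — obtained from the skein commutator relations such as $Ae\gamma-A^{-1}\gamma e=(A^2-A^{-2})t_e(\gamma)$ — and inverts the resulting $2\times 2$, tensor-product $4\times 4$, or $2\times 2$ linear system, whose determinant is a unit times a product of factors $U(A^mQ_e^2)$ invertible in $\mathcal{R}$. The one point to watch is the separating case: the second equation must come from the half-twist relation $A^2c\gamma-A^{-2}\gamma c=(A^4-A^{-4})\tau+(A^2-A^{-2})\delta_1$ (giving determinant $A^2U(A^2Q_c^2)$), not from pairing $\sigma(\gamma)$ with the full Dehn twist $\sigma(t_c(\gamma))$ alone, whose $2\times 2$ determinant is divisible by $A^2Q_c^2+A^{-2}Q_c^{-2}$ and hence is not invertible in the localization — your inclusion of $\tau$ as an equation takes care of this.
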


\begin{proof}
Suppose that $\gamma$ is a one-cycle. Recall that $\sigma(e) = -(A^2 Q_e^2+A^{-2} Q_e^{-2})$ and $ Q_e E_e = A E_e Q_e$. Moreover in the skein algebra of $\Sigma$, we have $A e \gamma -A^{-1} \gamma e = (A^2-A^{-2})t_e(\gamma)$. Using Equation (\ref{emb_1_cycle}) we obtain the following $\sigma(t_{e}(\gamma)) =  -A^3 E_e Q_e^2 - A^{-1} E_e^{-1} Q_e^{-2} F$ and therefore the following system
$$\left\{
    \begin{array}{llll}
    \sigma(\gamma) & = E_e  +  E_e^{-1}  F \\
\sigma(t_{e}(\gamma)) & = -A^3 E_e Q_e^2 - A^{-1} E_e^{-1} Q_e^{-2} F 
 \end{array}
\right.
$$ Solving this system gives Equations (\ref{1_cycle_lift}).

In the case where $\gamma$ is a two-cycle, the strategy is very similar. In this case, we apply $t_b$, $t_c$ and $t_b t_c$ to the curve $\gamma$ to get a system of four equations whose resolution gives (\ref{2_cycle_lift1}),
(\ref{2_cycle_lift2}),(\ref{2_cycle_lift3}), (\ref{2_cycle_lift4}).

Suppose now that  $\gamma$, is separating edge curve. In the Skein algebra of $\Sigma$ we have $$A^2 c \gamma - A^{-2} \gamma c = (A^4-A^{-4}) \tau + (A^2-A^{-2}) (d_1 d_3 +d_2 d_4)$$ Applying this to Equation (\ref{emb_sep}), one gets the following system
$$\left\{
    \begin{array}{llll}
\sigma(\gamma) -G_0 & = E_c^2 G_2+ E_c^{-2} G_{-2} \\
\sigma(\tau) + \dfrac{\sigma(d_1 d_3+d_2 d_4)-G_0 \sigma(c)}{A^2+A^{-2}} & = -E_c^{2}G_2 A^4 Q_c^2 -  E_c^{-2}G_{-2}  Q_c^{-2}  \end{array}
\right.
$$
Solving this system gives Equation (\ref{sep_lift1}) and (\ref{sep_lift2}).
\end{proof}
We finish this section by analyzing irreducible representations of $\mathcal{A}(\Gamma)^0$ at root of unity. Let $\xi$ be a $2p$-th primitive root of unity with $p$ odd and let $\rho : \mathcal{A}_{\xi}(\Gamma)^0 \to \mathrm{End}(V)$ be a complex irreducible finite dimensional representation. Let $x \in \mathcal{X} \cup \mathcal{Y}$, notice that $x^p$ is a central element of $\mathcal{A}_{\xi}(\Gamma)^0$, hence $\rho(x^p)$ is a scalar times the identity of $V$ (because $\rho$ is irreducible). Let us denote by $\lambda_{x,\rho}$ this scalar. Note that $Q_{c_g}$ is also central so $\rho(Q_{c_g}) = \lambda_{\rho} \mathrm{Id}_V$ for some $\lambda_{\rho} \in \C$.
\begin{proposition} \label{rep_Q_t}
Let $\rho_1$ and $\rho_2$ be two complex irreducible finite dimensional representations of $\mathcal{A}_{\xi}(\Gamma)^0$. If $\lambda_{\rho_1} = \lambda_{\rho_2}$ and $\lambda_{x,\rho_1}=\lambda_{x,\rho_2}$ for all $x \in \mathcal{X} \cup \mathcal{Y}$ then $\rho_1$ and $\rho_2$ are isomorphic. Any irreducible representation of $\mathcal{A}_{\xi}(\Gamma)^0$ has dimension $p^{3g-2}$ when $\Gamma$ has an univalent vertex and dimension $p^{3g-3}$ otherwise. 

\end{proposition}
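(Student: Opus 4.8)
The plan is to identify $\mathcal{T}_{\xi}(\Gamma)^0$, and hence $\mathcal{A}_{\xi}(\Gamma)^0$, as a quantum torus over $\mathbb{C}$ at a primitive $p$-th root of unity, and then read off its representation theory. \textbf{Step 1 (recognizing the quantum torus).} By Lemma \ref{gen_t}, $\mathcal{T}_{\xi}(\Gamma)^0$ is generated by $\mathcal{X}\cup\mathcal{Y}\cup\mathcal{Z}$ and the inverses of these elements, so I would first record the commutation relations among these generators, using only $Q_eE_e=AE_eQ_e$ and the commutativity of all other pairs among the $Q$'s and $E$'s. One finds that the elements of $\mathcal{X}$ pairwise commute, those of $\mathcal{Y}$ pairwise commute, $Q_{c_g}$ (present exactly when $\Gamma$ has a univalent vertex) is central, and that the obvious bijection $\phi$ matching $Q_{a_0}^2\leftrightarrow E_{a_0}$, $Q_{a_i}Q_{b_i}^{\pm1}\leftrightarrow E_{a_i}E_{b_i}^{\pm1}$ and $Q_{c_j}\leftrightarrow E_{c_j}^2$ satisfies $x\,\phi(x)=A^2\,\phi(x)\,x$ while $x$ commutes with $\phi(x')$ whenever $x\ne x'$ (for instance $Q_{a_i}Q_{b_i}$ commutes with $E_{a_i}E_{b_i}^{-1}$ because the two factors contribute opposite powers of $A$). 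Together with the module picture underlying Definition \ref{A0} and Lemma \ref{gen_t} — namely that $\mathcal{T}(\Gamma)^0$ is free over $\mathcal{R}^0$ with basis $\{E^k\}_{k\in\Lambda}$ and $\Lambda$ is a finite-index sublattice of $\mathbb{Z}^{\mathcal{E}'}$ — this exhibits $\mathcal{T}_{\xi}(\Gamma)^0$ as a quantum torus over $\mathbb{C}$ on $2N$ invertible variables when $\Gamma$ has no univalent vertex, and on $2N+1$ invertible variables with one of them ($Q_{c_g}$) central when it has one, where $N=|\mathcal{X}|$ equals $3g-3$, resp. $3g-2$, and where the commutation parameter $A^2$ specializes to $\xi^2$, a primitive $p$-th root of unity.

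\textbf{Step 2 (representations of the quantum torus).} Next I would invoke the standard structure theory of quantum tori at roots of unity. For each of the $N$ Weyl pairs $(x,\phi(x))$, the subalgebra $\mathbb{C}\langle x^{\pm1},\phi(x)^{\pm1}\rangle$ is a rank-two quantum torus at the primitive $p$-th root of unity $\xi^2$, hence Azumaya of rank $p^2$ over its center $\mathbb{C}[x^{\pm p},\phi(x)^{\pm p}]$; concretely, for each pair of nonzero scalars prescribed for $x^p$ and $\phi(x)^p$ there is, up to isomorphism, a unique irreducible representation, of dimension $p$, realized by a diagonal "clock" matrix and a scaled cyclic shift. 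Tensoring these over the $N$ pairs and letting the central $Q_{c_g}$ act by a scalar shows that $\mathcal{T}_{\xi}(\Gamma)^0$ is Azumaya of rank $p^{2N}$ over its center — the Laurent subalgebra generated by $\{x^p:x\in\mathcal{X}\cup\mathcal{Y}\}$ and by $Q_{c_g}^{\pm1}$, these being central because $A^{2p}=\xi^{2p}=1$ — so that every irreducible finite-dimensional complex representation of $\mathcal{T}_{\xi}(\Gamma)^0$ has dimension $p^N$ and is determined up to isomorphism by the scalars through which the center acts, i.e. by the data $\lambda_{x,\rho}$ ($x\in\mathcal{X}\cup\mathcal{Y}$) together with $\lambda_{\rho}$.

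\textbf{Step 3 (passing to $\mathcal{A}_{\xi}(\Gamma)^0$).} Finally I would transfer these statements across the inclusion $\mathcal{T}_{\xi}(\Gamma)^0\subset\mathcal{A}_{\xi}(\Gamma)^0$, the larger algebra being generated over the smaller one by the inverses of the normal elements $A^nQ_{\alpha}^2-A^{-n}Q_{\alpha}^{-2}$, which conjugation by the $E_e$ merely permutes. If $\rho$ is an irreducible representation of $\mathcal{A}_{\xi}(\Gamma)^0$ on $V$, each such element acts invertibly; any $\mathcal{T}_{\xi}(\Gamma)^0$-submodule $W\subseteq V$ is stable under it, hence — $W$ being finite-dimensional and the element acting injectively on $W$ — stable under its inverse, so $W$ is already an $\mathcal{A}_{\xi}(\Gamma)^0$-submodule; therefore $\rho|_{\mathcal{T}_{\xi}(\Gamma)^0}$ is irreducible, and the same injectivity-implies-bijectivity argument makes every $\mathcal{T}_{\xi}(\Gamma)^0$-intertwiner $\mathcal{A}_{\xi}(\Gamma)^0$-linear. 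Applying Step 2 to $\rho|_{\mathcal{T}_{\xi}(\Gamma)^0}$ then gives the dimension $p^{3g-2}$ when $\Gamma$ has a univalent vertex and $p^{3g-3}$ otherwise; and if $\rho_1$ and $\rho_2$ have the same $\lambda_{x,\rho}$ for $x\in\mathcal{X}\cup\mathcal{Y}$ and the same $\lambda_{\rho}$, their restrictions have the same central character, hence are isomorphic, hence so are $\rho_1$ and $\rho_2$.

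The step I expect to require the real work is Step 1: not the individual commutation computations, which are short, but making sure the generators $\mathcal{X}\cup\mathcal{Y}\cup\mathcal{Z}$ are genuinely algebraically independent, so that $\mathcal{T}_{\xi}(\Gamma)^0$ is the full quantum torus on $2N$ (or $2N+1$) generators rather than a proper quotient. This is precisely what the explicit module model behind Definition \ref{A0} and Lemma \ref{gen_t} secures, and it is what makes the $3g-2$ versus $3g-3$ dichotomy come out correctly.
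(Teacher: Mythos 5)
Your proposal is correct and follows the same route as the paper: reduce to the non-localized subalgebra $\mathcal{T}_{\xi}(\Gamma)^0$, use the generators of Lemma \ref{gen_t} to identify it with a tensor product of $3g-3$ (resp.\ $3g-2$) rank-two quantum tori at the primitive $p$-th root of unity $\xi^2$ (tensored with a central Laurent factor $\C[Q_{c_g}^{\pm 1}]$ in the boundary case), and invoke the representation theory of such algebras. The paper cites Bonahon--Liu for your Step 2 and leaves the reduction of your Step 3 implicit, whereas you supply both arguments explicitly and correctly.
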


\begin{proof}
It is enough to prove that the restrictions of $\rho_1$ and $\rho_2$ to $\mathcal{T}_{\xi}(\Gamma)^0$ are isomorphic. Let $\mathcal{W}$ be the complex algebra defined by the generators $U^{\pm 1}$, $V^{\pm 1}$ and by the relation $UV = \xi^2 V U$. From the generators of Lemma \ref{gen_t} it is easy to see that $\mathcal{T}_{\xi}(\Gamma)^0$ is isomorphic to $\mathcal{W}^{\otimes 3g-3}$ when $\Gamma$ does not have a univalent vertex (which is when $\partial \Sigma = \emptyset$) and to $\mathcal{W}^{\otimes 3g-2} \otimes \C[Z^{ \pm 1}]$ (where $Z$ is a formal independent variable) when $\Gamma$ has one univalent vertex. The result follows directly from \cite[Lemma 17,18]{BL07}.
\end{proof}

\section{Representation of the skein algebra}
\label{sec:representations}
In this section $\Sigma$ is still a surface with at most one boundary component with negative Euler characteristic. The goal of this section is to prove Theorem \ref{thm:lift_rep} and Corollary \ref{cor:unicity}.

Let $p \ge 3$ an odd number, we warn the reader that from now on $A$ will not be a formal variable but a $2p$-th primitive root of unity. We recall the Bonahon-Wong theory,  let $$\rho : S_A(\Sigma) \to \mathrm{End}(V)$$ be a finite dimensional irreducible representation. By Bonahon-Wong work, there exists $r : \pi_1(\Sigma) \to \mathrm{SL}_2(\C)$ such that for any simple close curve $\gamma$ we have \begin{equation} \label{1} T_p(\rho(\gamma)) = -\mathrm{Tr}(r(\gamma))\mathrm{Id}_V \end{equation} Here $T_k$ is $k$-th Chebyshev polynomial of the first kind, the important thing to remember is that $$T_k(u+u^{-1}) = u^k + u^{-k} \quad \forall u\in \C-\{0 \}$$ Moreover $r$ is called the classical shadow of $\rho$.

The proof of Theorem \ref{thm:lift_rep} requires several steps, it will starts at Subsection \ref{subs1} and ends at Subsection \ref{proof_lift_rep}. The goal is to define $\rt$ on the generators of $\mathcal{A}_A(\Gamma)^0$ (given in Lemma \ref{gen_t}), prove the relations satisfied by these generators are preserved by $\rt$, and show that $\rt$ agrees with $\rho$ on $\sigma_A(\Sigma_A(\Sigma))$.

The proof of Corollary \ref{cor:unicity} is done in Subsection \ref{classical_shadow}. From now on, we fix a irreducible representation $\rho : S_A(\Sigma) \to \mathrm{End}(V)$ with classical shadow $r$ satisfying the hypothesis of Theorem \ref{thm:lift_rep}.

\subsection{Action of the $Q$ operators} \label{subs1}
For $\alpha \in \mathcal{P}$, let us chose $x_{\alpha} \neq 0$ such that $\mathrm{Tr}(r(\alpha)) = x_{\alpha}^{2p} + x_{\alpha}^{-2p}$. A known fact is that $(1)$ implies that $\rho(\alpha)$ is diagonalizable with eigenvalues $$-(x_{\alpha}^2 A^{2k+2} + x_{\alpha}^{-2} A^{-2k-2})$$ for $k = 0,\ldots,p-1$. We define for $k \in \Z$ $$V_{\alpha,k} = \mathrm{Ker}\Big(\rho(\alpha)+(x_{\alpha}^2 A^{2k+2} + x_{\alpha}^{-2} A^{-2k-2} ) \mathrm{Id}_V \Big)$$ 
We also define $\tilde{\rho}(Q_{\alpha}) \in \mathrm{GL}(V)$ by $$\rt(Q_{\alpha}) v = x_{\alpha} (-A)^k v \quad \quad \forall v \in V_{\alpha,k}$$ so that 
\begin{equation} \label{Q}
\rho(\alpha) = -(A^2 \rt(Q_{\alpha})^2+ A^{-2} \rt(Q_{\alpha})^{-2})
\end{equation}
As the matrices $\{\rho(\alpha) \, | \, \alpha \in \mathcal{P} \}$ pairwise commute, it is clear that $\{ \rt(Q_{\alpha}) \, | \, \alpha \in \mathcal{P} \}$ pairwise commute. Hence the set $\{ \rt(Q_{\alpha}) \, | \, \alpha \in \mathcal{P} \}$ defines a morphism $\rt : \C[Q_e^{\pm 1} \, | \, e \in \mathcal{E}(\Gamma)] \to \mathrm{End}(V)$ (here we use that $\mathcal{P}$ is canonically in bijection with $\mathcal{E}(\Gamma)$).

The following lemma will help us extend $\rt$ further. Recall that for an invertible element $W$, $U(W)= W-W^{-1}$.

\begin{lemma}
For all $\alpha \in \mathcal{P}$, we have $U(A^k \rt(Q_{\alpha})^2) \in \mathrm{GL}(V)$ for any $k \in \Z$.
\end{lemma}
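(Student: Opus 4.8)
The plan is to unwind the definitions and reduce the statement to elementary facts about the eigenvalues of $\rho(\alpha)$. By construction in Subsection \ref{subs1}, for each $\alpha\in\mathcal{P}$ the operator $\tilde{\rho}(Q_{\alpha})$ acts on the eigenspace $V_{\alpha,k}$ by the scalar $x_{\alpha}(-A)^k$, where $x_{\alpha}\neq 0$ satisfies $\mathrm{Tr}(r(\alpha))=x_{\alpha}^{2p}+x_{\alpha}^{-2p}$. Hence $\tilde{\rho}(Q_{\alpha})^2$ acts on $V_{\alpha,k}$ by $x_{\alpha}^2 A^{2k}$, and the decomposition $V=\bigoplus_{k=0}^{p-1}V_{\alpha,k}$ diagonalizes it. Consequently $U(A^k\tilde{\rho}(Q_{\alpha})^2)=A^k\tilde{\rho}(Q_{\alpha})^2-A^{-k}\tilde{\rho}(Q_{\alpha})^{-2}$ is diagonal with eigenvalues $A^k x_{\alpha}^2 A^{2j}-A^{-k}x_{\alpha}^{-2}A^{-2j}$ for $j=0,\dots,p-1$, so it lies in $\mathrm{GL}(V)$ if and only if none of these eigenvalues vanishes.

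The first step is therefore to show $A^{k}x_{\alpha}^{2}A^{2j}\neq A^{-k}x_{\alpha}^{-2}A^{-2j}$ for all $j$ and all $k\in\Z$; equivalently $x_{\alpha}^{4}\neq A^{-2k-4j}$. Suppose for contradiction that $x_{\alpha}^{4}=A^{-2k-4j}$ for some integers. Since $A$ is a $2p$-th primitive root of unity with $p$ odd, this forces $x_{\alpha}^{4}$ to be a $2p$-th root of unity, hence $x_{\alpha}$ is a $8p$-th root of unity and in particular $x_{\alpha}^{2p}$ is a $4$-th root of unity. The second step is to rule this out using the hypothesis \eqref{eqG2} of Theorem \ref{thm:lift_rep}: we have $\mathrm{Tr}(r(\alpha))=x_{\alpha}^{2p}+x_{\alpha}^{-2p}$, and if $x_{\alpha}^{2p}\in\{1,-1,i,-i\}$ then $\mathrm{Tr}(r(\alpha))\in\{2,-2,0\}$, contradicting $\mathrm{Tr}(r(\alpha))\neq\pm 2$ in the cases $\pm 2$; the case $\mathrm{Tr}(r(\alpha))=0$ (i.e. $x_{\alpha}^{2p}=\pm i$) needs a slightly finer look, but in that case $x_{\alpha}^{4p}=-1$, which combined with $x_{\alpha}^{4}$ being a $2p$-th root of unity (so $x_{\alpha}^{4p}=1$) gives $1=-1$, a contradiction. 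Actually the cleanest route is: from $x_{\alpha}^{4}=A^{-2k-4j}$ we get $x_{\alpha}^{4p}=A^{-p(2k+4j)}=A^{-2p\cdot(\cdots)}=1$ since $A^{2p}=1$; but then $x_{\alpha}^{2p}$ is a square root of $1$, i.e. $x_{\alpha}^{2p}=\pm1$, whence $\mathrm{Tr}(r(\alpha))=\pm2$, contradicting \eqref{eqG2}.

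So the argument collapses to: $U(A^{k}\tilde{\rho}(Q_{\alpha})^2)$ is diagonalizable, its eigenvalues are the $A^{k}x_{\alpha}^{2}A^{2j}-A^{-k}x_{\alpha}^{-2}A^{-2j}$, and an eigenvalue vanishes precisely when $x_{\alpha}^{4}=A^{-2k-4j}$, which by the computation above forces $x_{\alpha}^{2p}=\pm1$ and hence $\mathrm{Tr}(r(\alpha))=\pm2$, excluded by hypothesis \eqref{eqG2}. Therefore all eigenvalues are nonzero and $U(A^{k}\tilde{\rho}(Q_{\alpha})^2)$ is invertible. I expect the only genuinely delicate point is keeping the root-of-unity bookkeeping straight — precisely which powers of $A$ and $x_{\alpha}$ are roots of unity of which orders — and making sure the reduction to $x_{\alpha}^{2p}=\pm1$ is airtight; once that is pinned down, the rest is a one-line diagonalization argument.
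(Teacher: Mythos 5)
Your argument is correct and is essentially identical to the paper's proof: both diagonalize $\rt(Q_\alpha)$, observe that a vanishing eigenvalue of $U(A^k\rt(Q_\alpha)^2)$ forces $A^{k+2j}x_\alpha^2=A^{-k-2j}x_\alpha^{-2}$, take $p$-th powers to get $x_\alpha^{4p}=1$, and conclude $\mathrm{Tr}(r(\alpha))=\pm 2$, contradicting hypothesis \eqref{eqG2}. The brief detour through "$x_\alpha^{2p}$ is a fourth root of unity" is unnecessary, but your final "cleanest route" is exactly the paper's argument and is airtight.
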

\begin{proof}
Let $k \in \Z$ and $\alpha \in \mathcal{P}$, the eigenvalues of $U(A^k \rt(Q_{\alpha})^2)$ are $U(A^{k +2l} x_{\alpha}^2)$ for $l \in \Z$. Suppose that one of these is $0$ which is to say that $U(A^k \rt(Q_{\alpha})^2) \notin \mathrm{GL}(V)$. It would imply $A^{k +2l} x_{\alpha}^2=A^{-k -2l} x_{\alpha}^{-2}$ for some $l \in \Z$. Taking the $p$-th power of this equality gives $x_{\alpha}^{4p} =1$ which implies $r(\alpha) = x_{\alpha}^{2p}+x_{\alpha}^{-2p}= \pm 2$. This would contradict (\ref{eqG2}).
\end{proof}
From this lemma we see that $\rt$ is defined for elements in $\mathcal{A}_A(\Gamma)$ of the form $\dfrac{X}{Y}$ where $X,Y \in \C[Q_e^{\pm 1} \, | \, e \in \mathcal{E}(\Gamma)]$ and $Y$ is a finite product of elements in the set $\{U(A^k Q_{\gamma}) \, | \, \gamma \in \mathcal{P}, k \in \Z \}$.

\begin{lemma} \label{invF}
Let $e_1,e_2,e_3 \in E(\Gamma)$ meeting at vertex. For all $k \in \Z$ and $\epsilon_1,\epsilon_2,\epsilon_3 \in \{-1,1 \}$ we have $$\rt(U(A^k Q_{e_1}^{\epsilon_1}Q_{e_2}^{\epsilon_2} Q_{e_3}^{\epsilon_3})) \in \mathrm{GL}(V)$$
\end{lemma}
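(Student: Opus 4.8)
The plan is to reduce the invertibility of $\tilde\rho(U(A^k Q_{e_1}^{\epsilon_1}Q_{e_2}^{\epsilon_2}Q_{e_3}^{\epsilon_3}))$ to a statement about eigenvalues and then recognise the resulting obstruction as exactly the quantity in (\ref{eqG1}). Since $e_1,e_2,e_3$ are the edges of $\Gamma$ at a trivalent vertex, the corresponding curves $\alpha_1,\alpha_2,\alpha_3\in\mathcal P$ bound a pair of pants in $\Sigma$, hence are pairwise disjoint, so $\rho(\alpha_1),\rho(\alpha_2),\rho(\alpha_3)$ commute and therefore so do $\tilde\rho(Q_{e_1}),\tilde\rho(Q_{e_2}),\tilde\rho(Q_{e_3})$. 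I would fix a common eigenline: on it $\tilde\rho(Q_{e_j})$ acts by a scalar $q_j=x_{\alpha_j}(-A)^{k_j}$, so $\tilde\rho(A^k Q_{e_1}^{\epsilon_1}Q_{e_2}^{\epsilon_2}Q_{e_3}^{\epsilon_3})$ acts by $\zeta:=A^k q_1^{\epsilon_1}q_2^{\epsilon_2}q_3^{\epsilon_3}$ and $\tilde\rho(U(\cdots))$ by $\zeta-\zeta^{-1}$. Thus it suffices to prove $\zeta^2\neq 1$ on every such line. Assuming $\zeta^2=1$, raising to the $p$-th power and using $A^{2p}=1$ together with $q_j^{2p}=x_{\alpha_j}^{2p}$ gives $t_1^{\epsilon_1}t_2^{\epsilon_2}t_3^{\epsilon_3}=1$, where $t_j:=x_{\alpha_j}^{2p}$ satisfies $t_j+t_j^{-1}=\mathrm{Tr}(r(\alpha_j))=:u_j$ by (\ref{1}). (If $e_1=e_2$, i.e.\ that edge is a loop at the vertex, then $\alpha_1=\alpha_2$ are the two boundary curves of the pair of pants that get glued; one uses $q_1=q_2$, $t_1=t_2$, and these curves still carry the same trace $u_1=u_2$, so the argument below goes through verbatim.)

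Next I would use the geometry of the pair of pants $P$ bounded by $\alpha_1,\alpha_2,\alpha_3$: picking a basepoint, $r|_{\pi_1(P)}$ is given by matrices $X_1,X_2,X_3\in\slC$ with $X_1X_2X_3=I$ and $\mathrm{Tr}(X_j)=u_j$. Then $X_3=(X_1X_2)^{-1}$ has the same spectrum as $X_1X_2$, namely the two roots of $\lambda^2-u_3\lambda+1$, i.e.\ $\{t_3,t_3^{-1}\}$. From $t_1^{\epsilon_1}t_2^{\epsilon_2}=t_3^{-\epsilon_3}$ we get that $t_1^{\epsilon_1}t_2^{\epsilon_2}$ is a root of $\lambda^2-u_3\lambda+1$, hence, dividing by $t_1^{\epsilon_1}t_2^{\epsilon_2}\neq 0$,
$$t_1^{\epsilon_1}t_2^{\epsilon_2}+t_1^{-\epsilon_1}t_2^{-\epsilon_2}=u_3.$$

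Finally, I would observe that the left-hand side is one of the two numbers $t_1t_2^{\pm 1}+t_1^{-1}t_2^{\mp 1}$, whose sum is $(t_1+t_1^{-1})(t_2+t_2^{-1})=u_1u_2$ and whose product is $t_1^2+t_1^{-2}+t_2^2+t_2^{-2}=u_1^2+u_2^2-4$; so these two numbers are the roots of $X^2-u_1u_2X+(u_1^2+u_2^2-4)$. Therefore $u_3$ is a root, i.e.\ $u_1^2+u_2^2+u_3^2-u_1u_2u_3-4=0$. Using $\mathrm{Tr}(r(\alpha_k^2))=\mathrm{Tr}(r(\alpha_k))^2-2=u_k^2-2$, the quantity in (\ref{eqG1}) equals $2+\sum_k(u_k^2-2)-u_1u_2u_3=u_1^2+u_2^2+u_3^2-u_1u_2u_3-4$, which therefore vanishes, contradicting the hypothesis. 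Hence $\zeta^2\neq 1$, so $\tilde\rho(U(A^kQ_{e_1}^{\epsilon_1}Q_{e_2}^{\epsilon_2}Q_{e_3}^{\epsilon_3}))\in\mathrm{GL}(V)$. The one step requiring genuine care is the last identification — recognising, via the pair-of-pants (Fricke) trace relation, that the naive eigenvalue obstruction $t_1^{\epsilon_1}t_2^{\epsilon_2}t_3^{\epsilon_3}=1$ is governed exactly by the expression appearing in (\ref{eqG1}); checking the loop-edge degeneration and keeping track of the $(-A)^{k_j}$ factors is routine once this is understood.
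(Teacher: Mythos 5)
Your proof is correct and follows essentially the same route as the paper: restrict to a common eigenline, reduce non-invertibility to $\zeta^2=1$, take $p$-th powers, and identify the resulting obstruction with the quantity in (\ref{eqG1}) --- the paper simply asserts the four-fold product identity $\prod_{\delta_2,\delta_3}(\lambda_1\lambda_2^{\delta_2}\lambda_3^{\delta_3}-\lambda_1^{-1}\lambda_2^{-\delta_2}\lambda_3^{-\delta_3})=2+\sum_k\mathrm{Tr}(r(\alpha_k^2))-\prod_k\mathrm{Tr}(r(\alpha_k))$ and reads off the contradiction, whereas you recover the same identity via Vieta's formulas. One remark: your appeal to the holonomy relation $X_1X_2X_3=I$ on the pair of pants is superfluous (and orientation-sensitive); the only fact you actually use is $t_3+t_3^{-1}=\mathrm{Tr}(r(\alpha_3))$, which holds purely by the definition of $x_{\alpha_3}$.
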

\begin{proof}
Let $\alpha_1,\alpha_2,\alpha_3 \in \mathcal{P}$ respectively dual to $e_1,e_2,e_3$.
Suppose there exists $k \in \Z$ and $\epsilon_1,\epsilon_2,\epsilon_3 \in \{-1,1 \}$ such that $\rt(U(A^k Q_{e_1}^{\epsilon_1}Q_{e_2}^{\epsilon_2} Q_{e_3}^{\epsilon_3}))$ is not invertible. It would mean that for some $l \in \Z$ we have $U(\pm A^l x_{\alpha_1}^{\epsilon_1} x_{\alpha_2}^{\epsilon_2} x_{\alpha_3}^{\epsilon_3}) = 0$. This implies $A^l x_{\alpha_1}^{\epsilon_1} x_{\alpha_2}^{\epsilon_2} x_{\alpha_3}^{\epsilon_3}=A^{-l} x_{\alpha_1}^{-\epsilon_1} x_{\alpha_2}^{-\epsilon_2} x_{\alpha_3}^{-\epsilon_3}$, taking the $p$-th power this equality (and remembering that $A^p = A^{-p}$), one gets $$ x_{\alpha_1}^{p\epsilon_1} x_{\alpha_2}^{p\epsilon_2} x_{\alpha_3}^{p\epsilon_3}= x_{\alpha_1}^{-p\epsilon_1} x_{\alpha_2}^{-p\epsilon_2} x_{\alpha_3}^{-p \epsilon_3}$$ Let $\lambda_1 = x_{\alpha_1}^{p\epsilon_1}, \lambda_2 = x_{\alpha_2}^{p\epsilon_2}, \lambda_3 =  x_{\alpha_3}^{p\epsilon_3}$ and remember that $\lambda_1 \lambda_2 \lambda_3 - \lambda_1^{-1} \lambda_2^{-1} \lambda_3^{-1} = 0$. Now notice that 
$$\prod_{\delta_2,\delta_3 \in \{-1,1\} } (\lambda_1 \lambda_2^{\delta_2} \lambda_3^{\delta_3}- \lambda_1^{-1} \lambda_2^{-\delta_2} \lambda_3^{-\delta_3}) = 2+\sum_{k=1}^3(\lambda_k^4+\lambda_k^{-4})- \prod_{k=1}^3 (\lambda_k^2+\lambda_k^{-2})$$
with the left hand side being zero by assumption and the right hand side being 
$$2+\sum_{k=1}^3 \mathrm{Tr}(r(\alpha_k^2))- \prod_{k=1}^3 \mathrm{Tr}(r(\alpha_k))$$ This would contradict (\ref{eqG1}).
\end{proof}

\subsection{Action of a one-cycle edge shift} \label{subs2}
Recall that the set of curves $\{\beta_1,\ldots,\beta_g,\gamma_1,\ldots,\gamma_{g-1}\}$ was defined in Figure \ref{fig_beta_gamma}, let $\gamma$ be one of these curves. Suppose that $\gamma$ is a one-cycle as in Figure \ref{fig_cycle} (a). In light of Equation (\ref{1_cycle_lift}), let us define
 \begin{align*}
\rt({E_e}) & = - \rt \left[\Big(t_{e}(\gamma)+A^{-1}\gamma Q_e^{-2}\Big)A^{-1} U(A^2 Q_e^2)^{-1}\right]  \\
\rt(E_e^{-1}) & =\quad \rt \left[\Big(A^3 \gamma Q_e^2+t_{e}(\gamma) \Big)A^{-1} U(A^2 Q_e^2)^{-1}F^{-1} \right] \end{align*}
where $F$ is defined in Equation (\ref{emb_1_cycle}). Notice that $\rt(F)$ is invertible by Lemma \ref{invF}, hence the second formula makes sense.

\noindent \textbf{Convention :} In the coming proofs, we sometimes drop the symbol $\rt$ when the notations are too cluttered.
\\

\begin{lemma} \label{1_cycle_rep} The following statements hold
\begin{enumerate}
\item[(a)] $\rt(Q_e) \rt(E_e) = -A  \rt(E_e) \rt( Q_e)$ and  $\rt(Q_e) \rt(E_e^{-1}) = -A ^{-1} \rt(E_e^{-1}) \rt( Q_e)$,
\item[(b)] $\rt(E_e) \rt(E_e^{-1}) = \mathrm{Id}_V$,
\item[(c)] $\rho(\gamma) = \rt(E_e)+\rt(E_e^{-1})\rt(F)$.
\end{enumerate}
\end{lemma}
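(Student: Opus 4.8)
The plan is to verify each of the three statements by pure computation inside the (localized) quantum torus, using the definitions $\rt(E_e)=-\bigl(t_e(\gamma)+A^{-1}\gamma Q_e^{-2}\bigr)A^{-1}U(A^2Q_e^2)^{-1}$ and $\rt(E_e^{-1})=\bigl(A^3\gamma Q_e^2+t_e(\gamma)\bigr)A^{-1}U(A^2Q_e^2)^{-1}F^{-1}$, together with the commutation relations already established for the $Q$-operators. The key structural inputs are: first, that $\rho$ restricted to the curves in $\mathcal{P}$ and to $\gamma, t_e(\gamma)$ is just the image of the skein algebra, so the skein relation $Ae\gamma-A^{-1}\gamma e=(A^2-A^{-2})t_e(\gamma)$ holds after applying $\rho$; second, that $\rt(Q_e)$ acts on the eigenspace $V_{\alpha,k}$ by the scalar $x_\alpha(-A)^k$, so that $\rt(Q_e)$ conjugates $\rho(\gamma)$ and $\rho(t_e(\gamma))$ in a controlled way.

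For (a): I would first record how $\rt(Q_e)$ interacts with $\rho(\gamma)$ and $\rho(t_e(\gamma))$. Since $\gamma$ (a one-cycle through the edge $e$) changes the color of $e$ by $\pm 1$, the operator $\rho(\gamma)$ maps $V_{e,k}$ into $V_{e,k-1}\oplus V_{e,k+1}$; decomposing $\rho(\gamma)=\rho(\gamma)_++\rho(\gamma)_-$ according to the $+1$ and $-1$ shift, one gets $\rt(Q_e)\rho(\gamma)_{\pm}=(-A)^{\mp 1}\rho(\gamma)_{\pm}\rt(Q_e)$, and similarly for $t_e(\gamma)$. Because $\rho(\gamma)=\rt(E_e)+\rt(E_e^{-1})\rt(F)$ (claim (c)) with $\rt(E_e)$ the $+1$-shift part and $\rt(E_e^{-1})\rt(F)$ the $-1$-shift part, the relations in (a) fall out. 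Alternatively, and more cleanly, one argues directly: $\rt(E_e)$ is a polynomial in $\rho(t_e(\gamma))$, $\rho(\gamma)$, $\rt(Q_e)^{\pm 1}$, and commuting with $\rt(Q_e)$ the factor $U(A^2Q_e^2)^{-1}$ is harmless, so the relation $\rt(Q_e)\rt(E_e)=-A\,\rt(E_e)\rt(Q_e)$ reduces to checking that $\rt(Q_e)$ twists $\rho(t_e(\gamma))+A^{-1}\rho(\gamma)\rt(Q_e)^{-2}$ by $-A$ on each side — which is exactly the statement that this combination is the "$E_e$-part", i.e.\ the $+1$-shift component.

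For (b) and (c): these are the heart of the lemma. Applying $\rho$ to the skein identity $Ae\gamma-A^{-1}\gamma e=(A^2-A^{-2})t_e(\gamma)$ and substituting $\rho(e)=-(A^2\rt(Q_e)^2+A^{-2}\rt(Q_e)^{-2})$ (which is Equation (\ref{Q})), one gets, after using $\rt(Q_e)\rho(\gamma)\rt(Q_e)^{-1}$-type moves, a linear system relating $\rho(\gamma)$ and $\rho(t_e(\gamma))$ to the two "formal unknowns" $X:=\rt(E_e)$ and $Y:=\rt(E_e^{-1})\rt(F)$: namely $\rho(\gamma)=X+Y$ and $\rho(t_e(\gamma))=-A^3\rt(Q_e)^2X - A^{-1}\rt(Q_e)^{-2}Y$ (this is the system displayed in the proof of Equation (\ref{1_cycle_lift}), now read at $A=\xi$). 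Solving it — the determinant is $-A^{-1}U(A^2\rt(Q_e)^2)$, invertible by Lemma 1.24 — gives precisely the formulas defining $\rt(E_e)$ and $\rt(E_e^{-1})\rt(F)$, which proves (c) and simultaneously shows that $\rt(E_e^{-1})\rt(F)=\rho(\gamma)-\rt(E_e)$. Then (b), $\rt(E_e)\rt(E_e^{-1})=\mathrm{Id}_V$, becomes the assertion that $\rt(E_e)\cdot\bigl(\rho(\gamma)-\rt(E_e)\bigr)\rt(F)^{-1}=\mathrm{Id}_V$, i.e.\ $\rt(E_e)\rho(\gamma)-\rt(E_e)^2=\rt(F)$; I would verify this by expanding $\rt(E_e)$ and $\rho(\gamma)$ in terms of the commuting data, pushing all $\rt(Q_e)$'s to one side using (a), and recognizing the resulting rational expression in $\rt(Q_e)$, $\rho(\gamma)$, $\rho(t_e(\gamma))$ as $\rt(F)$ after using the known value $F=U(A^2Q_e^2Q_f)U(Q_e^2Q_f^{-1})U(A^2Q_e^2)^{-1}U(Q_e^2)^{-1}$ together with the fusion-derived relation expressing $\rho(\gamma)^2$, $\rho(\gamma)\rho(t_e(\gamma))$, etc., in terms of $Q_e,Q_f$.

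The main obstacle I anticipate is (b): unlike (a) and (c), which follow essentially formally from the definitions and the linear system, establishing $\rt(E_e)\rt(E_e^{-1})=\mathrm{Id}$ requires genuinely using a \emph{quadratic} relation satisfied by $\rho(\gamma)$ — concretely, the value of $\sigma(\gamma^2)$ or equivalently the relation that the "$E_e^2$-coefficient" of $\sigma(\gamma)^2$ matches the one predicted by the fusion rules. This is where one must invoke that $\rt$ agrees with $\sigma_\xi$ on the image (so far only defined on $\rt(Q_e)$'s), or bootstrap from the explicit formula for $F$; a clean way is to note that in $S(\Sigma)$ one has $\gamma^2=$ (explicit polynomial in $e,f$) plus a twist term, apply $\rho$, and compare. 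I would organize the computation so that this quadratic identity is isolated as the single nontrivial input, with (a) and (c) dispatched first as warm-ups.
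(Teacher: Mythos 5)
Your plan matches the paper's proof in all essentials: (a) is proved by showing, via the skein commutation relations of $e$ with $\gamma$ and with $t_e(\gamma)$, that $\rho(e)\rt(E_e)=\rt(E_e)\rt(-A^4Q_e^2-A^{-4}Q_e^{-2})$, hence $\rt(E_e)$ shifts $V_{e,k}$ into $V_{e,k+1}$; (c) is the direct cancellation coming from the defining linear system; and (b) reduces, exactly as you anticipate, to a single quadratic skein identity, namely $\bigl(t_e(\gamma)+A^{-1}\gamma Q_e^{-2}\bigr)\bigl(A^3\gamma Q_e^2+t_e(\gamma)\bigr)=-A^2\bigl(A^2Q_e^4+A^{-2}Q_e^{-4}+\rho(f)\bigr)$, verified with the trace-type relations $t_e(\gamma)\gamma=A^{-1}e+At_\gamma^{-2}(e)$ and $t_\gamma^{-2}(e)e=A^2\gamma^2+f-A^2-A^{-2}+A^{-2}t_e(\gamma)^2$. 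The only caution is that your first suggested route to (a), which presupposes the decomposition of $\rho(\gamma)$ into shift components, is circular as stated; your alternative direct argument (which requires the companion commutation relation for $e\,t_e(\gamma)$ as well as the one for $e\gamma$) is the one the paper actually uses.
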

\begin{proof}
Let us prove (a). Recall that $V_{e,k} = \mathrm{Ker}\Big(\rho(e)+(x_{e}^2 A^{2k+2} + x_{e}^{-2} A^{-2k-2}) \mathrm{Id}_V \Big)$ for $k \in \Z$. Moreover, it follow from $ e \gamma = A^{-2} \gamma e + A^{-1}(A^2-A^{-2}) t_{e}(\gamma)$ and $e t_{e}(\gamma) = A^2 t_{e}(\gamma) e - A(A^2-A^{-2}) \gamma$ that 
$$\rho(e) \rt(E_e) = \rt(E_e) \rt(-A^4 Q_e^2-A^{-4} Q_e^{-2})$$
Which implies that $\rt(E_e) V_{e,k} \subseteq V_{e,k+1}$ and in particular $\rt(Q_e) \rt(E_e) = -A  \rt(E_e) \rt( Q_e)$. The second part of (a) can be done with exact same strategy.

Let us prove (b). From (a), $U(A^2 Q_e^2)^{-1} \rt(E_e^{-1}) = \rt(E_e^{-1}) U(Q_e^2)$ so
$$\rt(E_e)\rt(E_e^{-1}) = -\Big(t_{e}(\gamma)+A^{-1}\beta Q_e^{-2}\Big)\Big(A^3 \gamma Q_e^2+t_{e}(\gamma) \Big) A^{-2} U(Q_e^2)^{-1} U(A^2 Q_e^2)^{-1}F^{-1}$$
Note that $-A^{2} U(Q_e^2) U(A^2 Q_e^2)F=-A^2 \Big(A^2 Q_e^4+A^{-2} Q_e^{-4} + f \Big)$. Thus it is enough to prove \begin{equation} \label{pic} \Big(t_{e}(\gamma)+A^{-1}\beta Q_e^{-2}\Big)\Big(A^3 \gamma Q_e^2+t_{e}(\gamma) \Big)=-A^2 \Big(A^2 Q_e^4+A^{-2} Q_e^{-4} + f \Big) \end{equation} Still from (a)
\begin{align*} 
& \Big(t_{e}(\gamma)+A^{-1}\gamma Q_e^{-2}\Big)\Big(A^3 \gamma Q_e^2+t_{e}(\gamma) \Big) =  A^3 t_{e}(\gamma) \gamma Q_e^2 + t_{e}(\gamma)^2+A^4 \gamma^2 + A \gamma t_{e}(\gamma) Q_e^{-2}
\end{align*}
Now we use $t_{e}(\gamma) \gamma  = A^{-1} e + A t_{\gamma}^{-2}(e)$ to get \begin{align} & \Big(t_{e}(\gamma)+A^{-1}\gamma Q_e^{-2}\Big)\Big(A^3 \gamma Q_e^2+t_{e}(\gamma) \Big)  = A^2(Q_e^2+Q_e^{-2})e + t_{e}(\gamma)^2+A^4 \gamma^2-A^2 t_{\gamma}^{-2}(e) e  \notag \\ 
& = -A^2 \Big(A^2 Q_e^4+A^{-2} Q_e^{-4} + f \Big) \notag
\end{align}
The last equality is obtained using $t_{\gamma}^{-2}(e)e = A^2 \gamma^2 + f -A^2-A^{-2} + A^{-2} t_{e}(\gamma)^2$ and $e = -(A^2 Q_e^2+A^{-2} Q_{e}^{-2})$. 

The proof of (c) follows from direct computation.
\end{proof}
\subsection{Action of a two-cycle edge shift} \label{subs3}
Let $\gamma \in \{\beta_1,\ldots,\beta_g,\gamma_1,\ldots,\gamma_{g-1}\}$ be a two-cycle as in Figure \ref{fig_cycle} (b). In light of Equations (\ref{2_cycle_lift1}), (\ref{2_cycle_lift2}), (\ref{2_cycle_lift3}) and (\ref{2_cycle_lift4}) let us define
\begin{align*}
&\rt(E_b E_c)  = \tilde{\rho}\left[ \Big( \gamma A^{-2} Q_b^{-2} Q_c^{-2}+ t_b(\gamma) A^{-1} Q_c^{-2} + t_c(\gamma) A^{-1} Q_b^{-2} +t_b t_c(\gamma) \Big) D^{-1} \right]\\
&\rt(E_b E_c^{-1})  = \tilde{\rho}\left[ -\Big( \gamma A^{2} Q_b^{-2} Q_c^{2}+ t_b(\gamma) A^{3} Q_c^{2} + t_c(\gamma) A^{-1} Q_b^{-2} +t_b t_c(\gamma) \Big) D^{-1} F_{1,-1}^{-1}\right] \\
&\rt(E_b^{-1} E_c) =  \tilde{\rho}\left[-\Big( \gamma A^{2} Q_b^{2} Q_c^{-2}+ t_b(\gamma) A^{-1} Q_c^{-2} + t_c(\gamma) A^{3} Q_b^{2} +t_b t_c(\gamma) \Big) D^{-1} F_{-1,1}^{-1} \right]\\
&\rt(E_b^{-1} E_c^{-1}) = \tilde{\rho}\left[ \Big( \gamma A^{6} Q_b^{2} Q_c^{2}+ t_b(\gamma) A^{3} Q_c^{2} + t_c(\gamma) A^{3} Q_b^{2} +t_b t_c(\gamma) \Big) D^{-1}F_{-1,-1}^{-1} \right]
\end{align*} where $D = A^2 U(A^2 Q_c^2) U(A^2 Q_b^2)$ and where $F_{1,-1},F_{-1,1},F_{-1,-1}$ were defined in Equation (\ref{emb_2_cycle}).

\begin{lemma} \label{2_cycle_rep}
The following statements hold :
\begin{enumerate}
\item[(a)] $\rt(Q_b) \rt(E_b^{\epsilon_1} E_c^{\epsilon_2}) = (-A)^{\epsilon_1} \rt(E_b^{\epsilon_1} E_c^{\epsilon_2})\rt(Q_b)$ for all $\epsilon_1,\epsilon_2 \in \{-1,1\}$,
\item[(b)] $\rt(Q_c) \rt(E_b^{\epsilon_1} E_c^{\epsilon_2}) = (-A)^{\epsilon_2} \rt(E_b^{\epsilon_1} E_c^{\epsilon_2})\rt(Q_c)$ for all $\epsilon_1,\epsilon_2 \in \{-1,1\}$,
\item[(c)] $\rt(E_b E_c) \rt(E_b^{-1} E_c^{-1}) = \mathrm{Id}_V$ and $\rt(E_b E_c^{-1}) \rt(E_b^{-1} E_c) = \mathrm{Id}_V$,
\item[(d)] $\rt(E_b^{\epsilon_1} E_c^{\epsilon_2}) \rt(E_b^{\epsilon_3} E_c^{\epsilon_4}) = \rt(E_b^{\epsilon_3} E_c^{\epsilon_4}) \rt(E_b^{\epsilon_1} E_c^{\epsilon_2})$ for all $\epsilon_1,\epsilon_2,\epsilon_3, \epsilon_4 \in \{-1,1\}$,
\item[(e)] $\rho(\gamma) = \rt(E_b E_c) + \rt(E_b E_c) \rt(F_{1,-1}) + \rt(E_b^{-1} E_c) \rt(F_{-1,1})+ \rt(E_b^{-1} E_c^{-1}) \rt(F_{-1,-1})$.
\end{enumerate}
\end{lemma}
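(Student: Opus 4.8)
The plan is to prove Lemma \ref{2_cycle_rep} in essentially the same spirit as Lemma \ref{1_cycle_rep}, exploiting that the four elements $\rt(E_b^{\epsilon_1}E_c^{\epsilon_2})$ have been \emph{defined} so as to make Equations (\ref{2_cycle_lift1})--(\ref{2_cycle_lift4}) hold on the nose. The overarching observation is that all four operators are obtained from $\rho$ applied to polynomial expressions in $\rho(\gamma),\rho(t_b(\gamma)),\rho(t_c(\gamma)),\rho(t_bt_c(\gamma))$ and in $\rt(Q_b),\rt(Q_c)$, and the only genuinely new input needed beyond $\rho$ being an algebra map is the commutation of the $\rt(Q_\alpha)$ with these curve operators, coming from the Chebyshev/root-of-unity relation (\ref{1}).

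First I would establish (a) and (b). As in the one-cycle case, the skein relations give $b\gamma = A^{-2}\gamma b + A^{-1}(A^2-A^{-2})t_b(\gamma)$ and $b\,t_b(\gamma) = A^2 t_b(\gamma)b - A(A^2-A^{-2})\gamma$ (and the analogous pair for $c$, with $c$ commuting with $t_b(\gamma)$ and vice versa). Feeding these into the defining formula for $\rt(E_b^{\epsilon_1}E_c^{\epsilon_2})$ shows that this operator maps $V_{b,k}\cap V_{c,l}$ into $V_{b,k+\epsilon_1}\cap V_{c,l+\epsilon_2}$; since $\rt(Q_b)$ acts on $V_{b,k}$ by $x_b(-A)^k$ and $\rt(Q_c)$ by $x_c(-A)^l$, the claimed conjugation relations follow immediately. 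The factors $D^{-1}$ and $F^{-1}_{\pm1,\pm1}$ are rational functions of $\rt(Q_b),\rt(Q_c)$ only, hence preserve the common eigenspaces and do not affect this bookkeeping; Lemma \ref{invF} guarantees the $F_{\pm1,\pm1}^{-1}$ make sense.

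Next, (c) and (d): by (a) and (b) the four operators pairwise commute up to knowing it for the four product expressions, and the products $\rt(E_bE_c)\rt(E_b^{-1}E_c^{-1})$ and $\rt(E_bE_c^{-1})\rt(E_b^{-1}E_c)$ are, after moving all $Q$-factors past each other using (a)--(b), equal to $\rho$ of an explicit element of the skein algebra; the claim $=\mathrm{Id}_V$ reduces to an identity in $S_A(\Sigma)$ (or rather in its image under $\sigma_A$, matching the identity $\sigma(\gamma)\cdot\sigma(\gamma')=\cdots$ that underlies (\ref{emb_2_cycle})), exactly parallel to Equation (\ref{pic}) in the one-cycle proof, using the two-cycle analogues of $t_b(\gamma)\gamma = A^{-1}e_?+\cdots$ to linearize products of curve operators. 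For (d) one similarly checks that each $\rt(E_b^{\epsilon_1}E_c^{\epsilon_2})\rt(E_b^{\epsilon_3}E_c^{\epsilon_4})$ equals $\rho$ of a symmetric expression; alternatively, once (c) is known and the four operators satisfy the quantum-torus relations with the $Q$'s, commutativity among them is forced. Finally (e) is a direct computation: it is Equation (\ref{emb_2_cycle}) read through $\rho$, i.e. substituting the defining formulas for $\rt(E_b^{\epsilon_1}E_c^{\epsilon_2})$ and collecting terms yields $\rho(\gamma)$ because the coefficients were solved for precisely so that the $4\times 4$ linear system inverting (\ref{2_cycle_lift1})--(\ref{2_cycle_lift4}) returns $\sigma(\gamma)$.

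The main obstacle I anticipate is part (c) (and the symmetric computations feeding (d)): unlike the one-cycle case where a single identity (\ref{pic}) sufficed, here one must linearize products such as $t_b(\gamma)\gamma$, $t_c(\gamma)\gamma$, $t_bt_c(\gamma)\gamma$, $t_b(\gamma)t_c(\gamma)$ in the skein algebra in terms of the edge curves $a,b,c$ (with both parallel copies $a,a'$ for the two-cycle pattern) and their squares, and then match a sizeable rational expression in $Q_b,Q_c,Q_a,Q_{a'}$ against $1$. This is routine but bulky; the key simplifying principle is that everything can be pushed back to an identity already guaranteed by the fact (proved in Section \ref{sec:embedding}) that $\sigma$ is an \emph{algebra} morphism, so that no new relation needs to be checked — only that $\rt$ reproduces $\sigma_A$ on the relevant elements, which is how the defining formulas were constructed.
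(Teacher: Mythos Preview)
Your outline for (a), (b) and (e) matches the paper's approach and is correct: the skein relations for $b\gamma$, $b\,t_b(\gamma)$ (and their $c$-analogues) feed directly into the defining formulas to give the eigenspace shifts, and (e) is the inverse of the linear system used to define the four operators.

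There is, however, a genuine gap in your treatment of (c) and (d). Your ``key simplifying principle'' --- that the required identities are already guaranteed by $\sigma$ being an algebra morphism, so that ``no new relation needs to be checked'' --- is circular. Knowing that $\tilde X_{1,\epsilon}\tilde X_{-1,-\epsilon}$ equals the right $Q$-expression in $\mathcal{A}(\Gamma)$ is automatic from the quantum torus relations, but transferring that identity to $\mathrm{End}(V)$ would require a morphism from the subalgebra of $\mathcal{A}(\Gamma)$ generated by $\sigma(S_A(\Sigma))$ and the $Q_\alpha$ into $\mathrm{End}(V)$, sending $\sigma(z)\mapsto\rho(z)$ and $Q_\alpha\mapsto\rt(Q_\alpha)$. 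The existence of such a morphism is exactly what Theorem~\ref{thm:lift_rep} asserts and what Lemma~\ref{2_cycle_rep} is a step towards proving. Concretely, once you expand $X_{1,\epsilon}X_{-1,-\epsilon}$ via the defining formulas you must move factors of $\rt(Q_b)^{\pm 2}$ past operators like $\rho(\gamma)$, and you have no rule for this beyond what (a) and (b) give for the combinations $X_{\epsilon_1,\epsilon_2}$ themselves. The paper therefore carries out the skein computation in full: it writes $x=A^{-1}\gamma Q_b^{-2}+t_b(\gamma)$, $y=A^3\gamma Q_b^2+t_b(\gamma)$, computes $xy$ and $t_c(xy)$ via the one-cycle identity (\ref{pic}), and then reduces $z,z'$ using the skein expansion of $t_c(\gamma)t_b(\gamma)cb$; the final matching is a genuine (if routine) calculation, not a formality.

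For (d), your ``alternative'' is simply false: the $q$-commutation relations with $\rt(Q_b),\rt(Q_c)$ together with (c) do \emph{not} force the four operators to commute. Two operators on $V$ can satisfy identical $q$-commutation rules with the $\rt(Q_\alpha)$ and still fail to commute with each other --- nothing in (a)--(c) pins down the ``off-diagonal'' behavior. The paper instead uses (c) to reduce (d) to the single relation $\rt(E_bE_c)\rt(E_bE_c^{-1})=\rt(E_bE_c^{-1})\rt(E_bE_c)$, rewrites it as $X_{1,1}X_{1,-1}=X_{1,-1}X_{1,1}$, and then reduces further to the skein identity $A\,x\,t_c(x)=A^{-1}t_c(x)\,x$, which is checked by expanding $\gamma\,t_c(\gamma)$ and $t_c(\gamma)\,\gamma$ directly. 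Your first suggestion for (d) (``checks that the product equals $\rho$ of a symmetric expression'') is in the right spirit but is not a proof without carrying out this reduction and computation.
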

\begin{proof}
For this proof, let us define for $\epsilon_1,\epsilon_2 \in \{-1,1\}$ $$X_{\epsilon_1,\epsilon_2} = \rt(E_b^{\epsilon_1} E_c^{\epsilon_2}) \rt(D F_{\epsilon_1,\epsilon_2})$$ where we set $F_{1,1} = 1$. 

The proof of (a) is very similar to the proof of Lemma \ref{1_cycle_rep} (a). This time,
$$\begin{array}{ll}
b \gamma &= A^{-2} \gamma b +A^{-1}(A^2-A^{-2}) t_{b}(\gamma)  \\
b t_c(\gamma) &= A^{-2} t_c(\gamma) b +A^{-1}(A^2-A^{-2}) t_{b}(t_c(\gamma)) \\
b t_b(\gamma) &= A^{2} t_b(\gamma)b -A(A^2-A^{-2}) \gamma  \\
b t_b t_c(\gamma) &= A^{2} t_b t_c(\gamma)b -A(A^2-A^{-2}) t_c(\gamma) 
\end{array}$$
implies that $\rt(b) \rt(E_b^{\epsilon_1} E_c^{\epsilon_2}) = \rt(E_b^{\epsilon_1} E_c^{\epsilon_2}) \rt(-A^{2+2\epsilon_1} Q_b^2-A^{2-2\epsilon_1} Q_b^{-2})$ for $\epsilon,\epsilon_2 = \pm 1$.
The proof of (b) is exactly the same as (a). 

Let us prove (c).
$$\rt(E_b E_c^{\epsilon}) \rt(E_b^{-1} E_c^{-\epsilon}) = X_{1,\epsilon} X_{-1,-\epsilon}  (A^4 \hat{F}_{1,\epsilon} F_{-1,-\epsilon} \hat{D} D)^{-1}$$
with $$A^4 \hat{F}_{1,\epsilon} F_{-1,-\epsilon} \hat{D} D = A^4(A^{2\epsilon}Q_b^2 Q_c^{2\epsilon}+A^{-2\epsilon} Q_b^{-2} Q_c^{-2\epsilon}+a)(A^{2\epsilon}Q_b^2 Q_c^{2\epsilon}+A^{-2\epsilon} Q_b^{-2} Q_c^{-2\epsilon}+a')$$
Let us prove that $$X_{1,\epsilon} X_{-1,-\epsilon} =  A^4 \hat{F}_{1,\epsilon} F_{-1,-\epsilon} \hat{D} D$$
We fix $\epsilon \in \{-1,1\}$. 
Let $x = A^{-1} \gamma Q_b^{-2}+t_b(\gamma)$ and $y = A^3 \gamma Q_b^2+t_b(\gamma)$, we have
\begin{align*}
& X_{1,\epsilon}  = \epsilon(x A^{1-2\epsilon}Q_c^{-2\epsilon}+t_c(x)) \\
& X_{-1,-\epsilon}  = \epsilon(yA^{1+2\epsilon}Q_c^{2\epsilon}+t_c(y))
\end{align*}
$$X_{1,\epsilon} X_{-1,-\epsilon} = A^4 xy +t_c(xy)+A^2(A^{2\epsilon -1} t_c(x)yQ_c^{2\epsilon}+A^{-2\epsilon+1}xt_c(y)Q_c^{-2\epsilon})$$
Notice that the computations done in Lemma \ref{1_cycle_rep} can be repeated (more precisely Equation \ref{pic}) and give 
\begin{equation} \label{u1}
xy = -A^2(A^2Q_b^4+A^{-2} Q_b^{-4}+\delta_b)\end{equation}
\begin{equation} \label{u2} t_c(xy) = -A^2(A^2Q_b^4+A^{-2} Q_b^{-4}+t_c(\delta_b)) \end{equation}

We compute $A^{2\epsilon -1} t_c(x)yQ_c^{2\epsilon}+A^{-2\epsilon+1}xt_c(y)Q_c^{-2\epsilon} = A^4 z+t_b(z)+A^2 z'$ where $z = A^{2 \epsilon-1}t_c(\gamma) \gamma Q_c^{2\epsilon}+A^{1-2\epsilon}\gamma t_c(\gamma) Q_c^{-2\epsilon}$ and $$z' = A^{2\epsilon} t_c t_b(\gamma) \gamma Q_b^2 Q_c^{2\epsilon}+A^{-2\epsilon}  \gamma t_c t_b(\gamma) Q_b^{-2} Q_c^{-2 \epsilon}+ t_b(\gamma)t_c(\gamma)\Big[A^{2\epsilon-2}Q_b^{-2}Q_c^{2\epsilon}+A^{-2\epsilon+2}Q_b^{2}Q_c^{-2\epsilon}\Big]$$
Hence \begin{equation} \label{u} X_{1,\epsilon} X_{-1,-\epsilon} = A^4xy+t_c(xy) + A^2 (A^4 z+t_b(z)+A^2 z') \end{equation}

We compute using skein relations :
\begin{equation} \label{u3} z = -A^2 \gamma^2- A^{-2} (t_c(\gamma))^2 -\delta_c +(A^2+A^{-2})+c(A^{2-2\epsilon}
Q_c^2+A^{-2+2 \epsilon}
Q_c^{-2}) \end{equation}

\begin{equation} \label{u4} t_b(z) = -A^2 (t_b(\gamma))^2- A^{-2} (t_bt_c(\gamma))^2 -t_b(\delta_c) +(A^2+A^{-2})+c(A^{2-2\epsilon}
Q_c^2+A^{-2+2 \epsilon}
Q_c^{-2}) \end{equation}

\begin{equation} \label{u5} z' = t_c(\gamma)t_b(\gamma) c b + (a+a')(A^{2\epsilon}Q_b^2 Q_c^{2\epsilon}+A^{-2\epsilon} Q_b^{-2} Q_c^{-2\epsilon}) +cb (A^{2\epsilon-2}Q_b^2 Q_c^{2\epsilon}+A^{2-2\epsilon}Q_b^{-2} Q_c^{-2\epsilon}) \end{equation}
The computation of $ t_c(\gamma) t_b(\gamma) cb$ gives 

\noindent $ A^4 \gamma^2+A^2(\delta_b+\delta_c)+(b^2+c^2-(A^2+A^{-2})^2+aa'+(t_b(\gamma))^2+(t_c(\gamma))^2)+A^{-2}(t_b(\delta_c)+t_c(\delta_b))+A^{-4}(t_bt_c(\gamma))^2$

Combining (\ref{u1}),(\ref{u2}),(\ref{u3}),(\ref{u4}),(\ref{u5}) in (\ref{u}) we get 

$$X_{1,\epsilon} X_{-1,-\epsilon}  
= A^4(A^{2\epsilon}Q_b^2 Q_c^{2\epsilon}+A^{-2\epsilon} Q_b^{-2} Q_c^{-2\epsilon}+a)(A^{2\epsilon}Q_b^2 Q_c^{2\epsilon}+A^{-2\epsilon} Q_b^{-2} Q_c^{-2\epsilon}+a')$$

Let us prove (d). By (c) it is enough to prove that $$\rt(E_b E_c) \rt(E_b E_c^{-1}) = \rt(E_b E_c^{-1}) \rt(E_b E_c)$$
We have \begin{align*}
\rt(E_b E_c) \rt(E_b E_c^{-1}) &= X_{1,1} X_{1,-1} \Big(A^4 U(A^2Q_b^2)U(A^2Q_c^2)U(A^4 Q_b^2) U(Q_c^2) F_{1,-1}\Big)^{-1} \\
\rt(E_b E_c^{-1}) \rt(E_b E_c) &= X_{1,-1} X_{1,1} \Big(A^4 U(A^2Q_b^2)U(A^2Q_c^2)U(A^4 Q_b^2) U(Q_c^2) F_{1,-1}\Big)^{-1}
\end{align*}
Thus it is enough to prove $X_{1,1}X_{1,-1} = X_{1,-1}X_{1,1}$. This reduces to \begin{align*}
& A^4 x^2+t_c(x^2)+Axt_c(x)Q_c^{-2}+A^3t_c(x)xQ_c^{2} = A^4 x^2+t_c(x^2) +A^5xt_c(x)Q_c^{2}+A^{-1}t_c(x)xQ_c^{-2} \\
&\iff Axt_c(x)Q_c^{-2}+A^3t_c(x)xQ_c^{2} =  A^5xt_c(x)Q_c^{2}+A^{-1}t_c(x)xQ_c^{-2} \\
& \iff Axt_c(x)(A^2 Q_c^2-A^{-2} Q_c^{-2}) = A^{-1} t_c(x)x(A^2 Q_c^2-A^{-2} Q_c^{-2}) \\
& \iff Axt_c(x) = A^{-1} t_c(x)x\end{align*}
The last equivalence is obtained because $U(A^2 Q_c^2)$ is invertible. Let us expand $Axt_c(x) - A^{-1} t_c(x)x$ remembering that $Q_b x = A x Q_b$ and prove it is zero :
\begin{align*}
& Axt_c(x) - A^{-1} t_c(x)x = A^{-4}\Big[A\gamma t_c(\gamma)-A^{-1}t_c(\gamma) \gamma \Big]Q_b^{-4} + \\
& A^{-2}\Big[\gamma t_c(\gamma)-t_c(\gamma) \gamma  +(A^2-A^{-2})t_b(\gamma)t_c(\gamma)\Big]Q_b^{-2} + t_b\Big(A \gamma t_c(\gamma)-A^{-1} t_c(\gamma) \gamma \Big)
\end{align*}
Expanding $\gamma t_c(\gamma)$ and $t_c(\gamma) \gamma$ using skein relations, this expression reduces to 
\begin{align*}
& (A^2-A^{-2}) (A^{-4}cQ_b^{-4}+A^{-2}bc Q_b^{-2} +c) \\
& = (A^2-A^{-2})\Big[c(A^2 Q_b^2+A^{-2} Q_b^{-2})+bc \Big] Q_b^{-2} A^{-2} \\
& = (A^2-A^{-2})\Big[-cb+bc \Big] Q_b^{-2} A^{-2} \\
& = 0
\end{align*}

\end{proof}

\subsection{Action of the square of a separating edge shift} \label{subs4}
Let $\gamma$ be a separating edge curve as shown in Figure \ref{fig_cycle} (c). Again inspired by Equations (\ref{sep_lift1}) and (\ref{sep_lift2}) let \begin{align*}
 \rt(E_c^2) & = -\tilde{\rho} \left[\Big(\gamma Q_c^{-2} + \tau - \dfrac{A^{-2} \delta_1 Q_c^{-2}-A^2 \delta_2}{U(A^4 Q_c^2)}\Big)A^{-2} U(A^2 Q_c^2)^{-1} G_2^{-1} \right]  \\
 \rt(E_c^{-2}) & = \tilde{\rho} \left[\Big(\gamma A^2 Q_c^2 + A^{-2} \tau + \dfrac{ \delta_1 Q_c^2-\delta_2}{U(Q_c^2)} \Big) U(A^2 Q_c^2)^{-1} G_{-2}^{-1} \right]   
\end{align*}

\begin{lemma} \label{sep_rep}The following statements hold :
\begin{enumerate}
\item[(a)] $\tilde{\rho}(Q_c) \rt(E_c^2) = A^2 \rt(E_c^2) \tilde{\rho}(Q_c)$ and $\tilde{\rho}(Q_c) \rt(E_c^{-2}) = A^{-2} \rt(E_c^{-2}) \tilde{\rho}(Q_c)$,
\item[(b)] $\rt(E_c^2) \rt(E_c^{-2})= \mathrm{Id}_V$.
\item[(c)] $\rho(\gamma) = \rt(E_c^2) \rt(G_2)+ \rt(G_0) + \rt(E_c^{-2}) \rt(G_{-2})$.
\end{enumerate}

\end{lemma}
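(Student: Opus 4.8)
The plan is to argue as in the proofs of Lemmas \ref{1_cycle_rep} and \ref{2_cycle_rep}. Throughout I suppress $\rt$ when convenient, writing $q$ for $\rt(Q_c)$, $c$ for $\rho(c)=-(A^2q^2+A^{-2}q^{-2})$ (by (\ref{Q})), and $\gamma,\tau,d_1,\dots,d_4$ for $\rho(\gamma),\rho(\tau),\rho(d_j)$. Since the curve $\gamma$ meets only the pants curve $c$, each $d_j$ is disjoint from $\gamma$ and from $\tau$; hence $\rho(d_j)$, and therefore $\delta_1=d_1d_3+d_2d_4$, $\delta_2=d_1d_2+d_3d_4$, $G_0$, $G_2$, $G_{-2}$, all lie in the commutative subalgebra generated by $q$ and the $Q_{d_j}$'s and commute with $c$; in particular $\rt(G_2)$ and $\rt(G_{-2})$ are invertible, as is already needed for the definitions of $\rt(E_c^{\pm2})$. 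Besides the skein relation $A^2c\gamma-A^{-2}\gamma c=(A^4-A^{-4})\tau+(A^2-A^{-2})\delta_1$ recalled above, I will use the analogous identity expressing $A^2c\tau-A^{-2}\tau c$ as a combination of $\tau c$, $\gamma$ and products of the $d_j$'s, obtained in the same way (it is the image of the first relation under the half-twist about $c$, which fixes $c$ and interchanges $\gamma$ and $\tau$).

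For (a) I mirror Lemma \ref{1_cycle_rep}(a): I first prove
\[
\rho(c)\,\rt(E_c^2)=\rt(E_c^2)\,\rt\!\left(-A^6Q_c^2-A^{-6}Q_c^{-2}\right)
\]
by applying $\rho(c)$ on the left of the defining formula for $\rt(E_c^2)$ (modelled on (\ref{sep_lift1})). The operator $\rho(c)$ commutes through every factor that is a function of $q$ and the $Q_{d_j}$'s, namely $\rt(Q_c^{\pm2})$, $\rt(U(A^2Q_c^2))^{-1}$, $\rt(U(A^4Q_c^2))^{-1}$, $\rho(\delta_1)$, $\rho(\delta_2)$ and $\rt(G_2)^{-1}$, while $\rho(c)\gamma$ and $\rho(c)\tau$ are rewritten with the two skein relations above; collecting terms identifies the result with $\rt(E_c^2)\,\rt(-A^6Q_c^2-A^{-6}Q_c^{-2})$. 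Since $\rt(Q_c)$ acts on $V_{c,k}$ by $x_c(-A)^k$, the displayed identity gives $\rt(E_c^2)(V_{c,k})\subseteq V_{c,k+2}$, which is precisely $\rt(Q_c)\rt(E_c^2)=A^2\rt(E_c^2)\rt(Q_c)$; the statement for $E_c^{-2}$ is the symmetric computation from (\ref{sep_lift2}).

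For (c) the verification is direct, as in Lemma \ref{1_cycle_rep}(c). Substituting the defining formulas into $\rt(E_c^2)\rt(G_2)+\rt(G_0)+\rt(E_c^{-2})\rt(G_{-2})$, the two $\gamma$-contributions combine (using $A^2q^2-A^{-2}q^{-2}=U(A^2q^2)$) to $\gamma\,U(A^2q^2)\,U(A^2q^2)^{-1}=\gamma$; the two $\tau$-contributions are $-A^{-2}\tau\,U(A^2q^2)^{-1}$ and $+A^{-2}\tau\,U(A^2q^2)^{-1}$ and cancel; and the remaining terms involve only $\delta_1,\delta_2$ and rational functions of $q$, hence commute, and add up to $0$ after substituting $c=-(A^2q^2+A^{-2}q^{-2})$ in $G_0$ and clearing the denominators $U(q^2)$, $U(A^2q^2)$, $U(A^4q^2)$. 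Thus the sum equals $\rho(\gamma)$.

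The main obstacle is (b). Using (a) I move the $q$-function tail of $\rt(E_c^2)$ past $\rt(E_c^{-2})$, so that the two ``numerator'' brackets of (\ref{sep_lift1}) and (\ref{sep_lift2}) become adjacent; this reduces the claim $\rt(E_c^2)\rt(E_c^{-2})=\mathrm{Id}_V$ to a single identity in the skein algebra of $\Sigma$ --- the analogue of Equation (\ref{pic}) in the proof of Lemma \ref{1_cycle_rep} --- stating that the product of those two brackets equals an explicit element of the commutative subalgebra generated by $q$ and the $Q_{d_j}$'s. That identity follows by expanding $\gamma^2$, $\gamma\tau$, $\tau\gamma$, $\tau^2$ with the skein relations above, substituting $c=-(A^2q^2+A^{-2}q^{-2})$ and $d_j=-(A^2Q_{d_j}^2+A^{-2}Q_{d_j}^{-2})$, and simplifying. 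I expect this simplification to be the only genuinely laborious step: it is longer than in the one- and two-cycle cases because the four curves $d_1,\dots,d_4$ enter, but it is of exactly the same nature.
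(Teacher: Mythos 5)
Your plan follows the paper's proof essentially step for step: (a) by showing $\rho(c)\,\rt(E_c^2)=\rt(E_c^2)\,\rt(-A^6Q_c^2-A^{-6}Q_c^{-2})$ via the two skein relations for $c\gamma$ and $c\tau$, (c) by direct cancellation (your bookkeeping of the $\gamma$-, $\tau$- and $\delta_i$-terms is correct), and (b) by using (a) to bring the two numerator brackets together and reducing to one identity in $S(\Sigma)$ --- exactly the paper's $Y_2Y_{-2}=A^2\,U(A^{-2}Q_c^2)\hat{G_2}\,U(A^2Q_c^2)G_{-2}$. The one substantive caveat is that for (b) you only assert that this identity ``follows by expanding $\gamma^2$, $\gamma\tau$, $\tau\gamma$, $\tau^2$ \dots and simplifying''; that verification is not routine bookkeeping but the entire content of the paper's proof of (b), requiring the auxiliary expansions $\gamma\tau=A^2c+\delta_3+A^{-2}\varphi$ and of $\varphi c$ in terms of $\gamma^2,\tau^2,\delta_1\tau,\delta_2\gamma$ and $\Delta$, so your proposal is an accurate outline rather than a complete proof of that part. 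A minor imprecision: the half-twist about $c$ does not interchange $\gamma$ and $\tau$ (it sends $\gamma\mapsto\tau\mapsto t_c(\gamma)$), so the second skein relation reads $A^{-2}c\tau-A^{2}\tau c=-(A^4-A^{-4})\gamma-(A^2-A^{-2})\delta_2$, with the opposite $A^{\pm2}$ weighting from the one you wrote; this does not affect the argument's structure.
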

\begin{proof}
(a) Using skein relations we have 
\begin{align*}
c \gamma & = A^{-4} \gamma c + A^{-2}(A^4-A^{-4}) \tau + A^{-2} (A^2-A^{-2}) (d_1 d_3 + d_2 d_4) \\
c \tau & = A^{4} \tau c - A^{2}(A^4-A^{-4}) c - A^{2} (A^2-A^{-2}) (d_1 d_2 + d_3 d_4) 
\end{align*}
Then a computation implies $$c E_c^{\pm 2} = E_c^{\pm 2} (-A^{2 \pm 4} Q_c^2 - A^{-(2 \pm 4)} Q_c^{-2})$$ which implies the desired equalities.

(b) Let $Y_2 = \rt(E_c^2) A^{2} U(A^2 Q_c^2)^{1} G_2^{1}$ and $Y_{-2} = \rt(E_c^{-2}) U(A^2 Q_c^2) G_{-2}$. Using (a), we have $\rt(E_c^2) \rt(E_c^{-2}) = Y_2 Y_{-2} \Big( A^2 U(A^{-2} Q_c^2) \hat{G_2} U(A^2 Q_c^2) G_{-2} \Big)^{-1} $ where $$\hat{G_2} = -U(A^{2} Q_{d_1} Q_{d_4} Q_{c}^{-1}) U(A^{2} Q_{d_2} Q_{d_3} Q_{c}^{-1})$$ Let us prove that $Y_2 Y_{-2} = A^2 U(A^{-2} Q_c^2) \hat{G_2} U(A^2 Q_c^2) G_{-2} $. A brute force computation gives 
\begin{align*}
& A^2 U(A^{-2} Q_c^2) \hat{G_2} U(A^2 Q_c^2) G_{-2} \\
& = -A^2 \Big(-T^4+\delta_3 T^3 +(8-\Delta) T^2+(\delta_1 \delta_2-4 \delta_3) T + (4\Delta -16 - \delta_1^2 - \delta_2^2) \Big) U(Q_c^2)^{-2}
\end{align*}
where $\Delta = d_1^2+d_2^2+d_3^2+d_4^2+d_1d_2d_3d_4$ and $T = Q_c^2+Q_c^{-2}$. (Note that this egality is equivalent to an equality of two Laurent polynomials in $A$ and the variables $Q_e,$ hence can be checked using Sage for example). Therefore it is enough to prove that $$-Y_2 Y_ {-2} = A^2 \Big(-T^4+\delta_3 T^3 +(8-\Delta) T^2+(\delta_1 \delta_2-4 \delta_3) T + (4\Delta -16 - \delta_1^2 - \delta_2^2) \Big) U(Q_c^2)^{-2}$$
Let us prove this :
\begin{align*}
& -Y_2 Y_ {-2} = \left(\gamma Q_c^{-2} + \tau - \dfrac{A^{-2} \delta_1 Q_c^{-2}-A^2 \delta_2}{U(A^4 Q_c^2)}\right) \left(\gamma A^2 Q_c^2 + A^{-2} \tau + \dfrac{ \delta_1 Q_c^2-\delta_2}{U(Q_c^2)}\right) \\
& = A^6 \gamma^2 + A^2 \gamma \tau Q_c^{-2} +A^4 \gamma \delta_1 U(Q_c^2)^{-1} - A^4 \gamma \delta_2 Q_c^{-2} U(Q_c^2)^{-1} \\
& + A^2 \tau \gamma Q_c^2+A^{-2} \tau^2 + \tau \delta_1 Q_c^2 U(Q_c^2)^{-1}-\tau \delta_2 U(Q_c^2)^{-1}\\
& -A^4 \delta_1 \gamma U(Q_c^2)^{-1}-\delta_1 \tau Q_c^{-2} U(Q_c^2)^{-1}-A^2 \delta_1^2 U(Q_c^2)^{-2}+ A^2 \delta_1 \delta_1 Q_c^{-2} U(Q_c^2)^{-2} \\
& +A^4 \delta_2 \gamma Q_c^2+\delta_2 \tau U(Q_c^2)^{-1} + A^2 \delta_1 \delta_2 Q_c^2 U(Q_c^2)^{-2}- A^2 \delta_2^2 U(Q_c^2)^{-2}\\
& = A^4 \delta_2 \gamma + \tau \delta_1 + \big[A^2 \delta_1 \delta_2 (Q_c^2+Q_c^{-2})-A^2(\delta_1^2+\delta_2^2)\big]U(Q_c^2)^{-2} \\
& + A^6 \gamma^2+A^2(\gamma \tau Q_c^{-2}+\tau \gamma Q_c^2)+A^{-2} \tau^2
\end{align*}
The term $\gamma \tau$ can reduced using skein relations to $A^2 c + \delta_3+A^{-2} \varphi$, where $\delta_3 = d_1 d_4+d_2 d_3$ and $\varphi$ is 
$$\includegraphics[scale = 0.3]{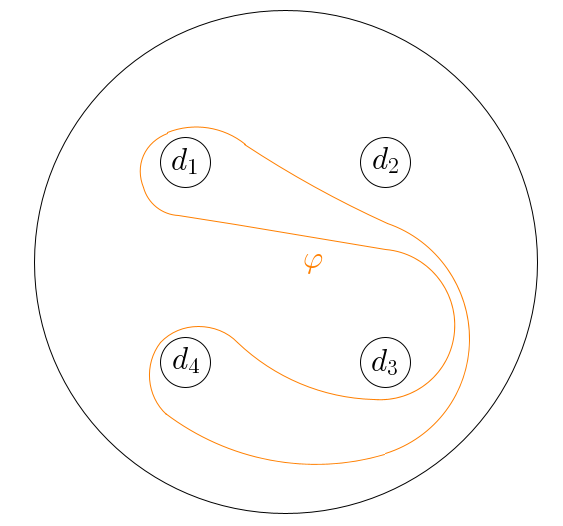}$$
Hence $\gamma \tau Q_c^{-2}+\tau \gamma Q_c^2 =(A^{-2} Q_c^2+A^2 Q_c^{-2})c+ \delta_3(Q_c^2+Q_c^{-2})-\varphi c $. The expansion of $\varphi c$ gives 
$$\varphi c = \Delta -(A^2+A^{-2})^2+A^4 \gamma^2+A^2 \delta_2 \gamma+A^{-2} \delta_1 \tau+A^{-4} \tau^2$$ we recall that $\Delta = d_1^2+d_2^2+d_3^2+d_4^2+d_1d_2d_3d_4$. Now plugging this back to the expression of $-Y_2Y_{-2},$ we get after simplifications
\begin{align*}
& -Y_2 Y_ {-2} = \left(\gamma Q_c^{-2} + \tau - \dfrac{A^{-2} \delta_1 Q_c^{-2}-A^2 \delta_2}{U(A^4 Q_c^2)}\right) \left(\gamma A^2 Q_c^2 + A^{-2} \tau + \dfrac{ \delta_1 Q_c^2-\delta_2}{U(Q_c^2)}\right) \\
& = A^2 \Big( \delta_1 \delta_2 (Q_c^2+Q_c^{-2})U(Q_c^2)^{-2} - (\delta_1^2+\delta_2^2) U(Q_c^2)^{-2} + \delta_3 (Q_c^2+Q_c^{-2}) - \Delta - U(Q_c^2)^2\Big) \\
& = A^2 \Big( \delta_1 \delta_2 (Q_c^2+Q_c^{-2}) - (\delta_1^2+\delta_2^2)  + \delta_3 (Q_c^2+Q_c^{-2})U(Q_c^2)^{2} - \Delta U(Q_c^2)^{2} - U(Q_c^2)^{4} \Big)U(Q_c^2)^{-2} \\
& = A^2 \Big(-T^4+\delta_3 T^3 +(8-\Delta) T^2+(\delta_1 \delta_2-4 \delta_3) T + (4\Delta -16 - \delta_1^2 - \delta_2^2) \Big) U(Q_c^2)^{-2}
\end{align*}
where we recall that $T = Q_c^2+Q_c^{-2}.$ The last equality is obtained from the identity $U(Q_c^2)^2 = T^2-4$.

Point (c) is derived from a direct computation.
\end{proof}

\subsection{Commutation between the square of an edge and a one-cycle}\label{sec:com1cse}
Consider the following portion of the graph 
$$\includegraphics[scale = 0.35]{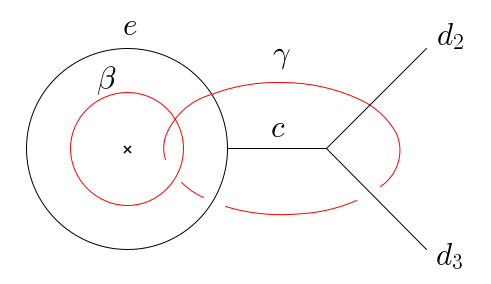}$$ 
Recall that $\rt(E_c^2)  = \tilde{\rho} \left[Y_2 A^{-2} U(A^2 Q_c^2)^{-1} G_2^{-1} \right]$ where $G_2  =  -U(Q_{e}^2 Q_{c}^{-1}) U(Q_{d_2} Q_{d_3} Q_{c}^{-1})$ and 
where $$-Y_2 = \gamma Q_c^{-2} + \tau - \dfrac{A^{-2} \delta_1 Q_c^{-2}-A^2 \delta_2}{U(A^4 Q_c^2)}$$
with $\delta_1 = e d_3+e d_2 $, $\delta_2 = e d_2+e d_3 $ and $\tau$ being the following curve :
$$\includegraphics[scale = 0.35]{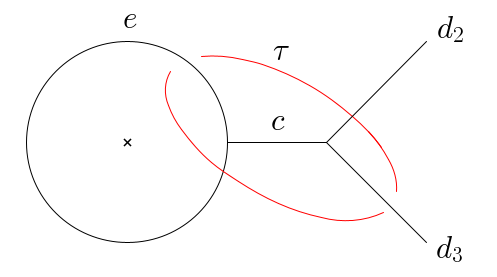}$$ 
Recall also that $\rt(E_e) = -\tilde{\rho} \left[\Big(t_{e}(\beta)+A^{-1}\beta Q_e^{-2}\Big)A^{-1} U(A^2 Q_e^2)^{-1} \right]$

\begin{lemma} \label{com1cse}
$\rt(E_e) \rt(E_c^2) = \rt(E_c^2) \rt(E_e)$.
\end{lemma}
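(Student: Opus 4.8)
The plan is to prove the commutation by a direct computation, after isolating the one genuinely non‑trivial interaction. Both $\rt(E_e)$ and $\rt(E_c^2)$ are by construction $\tilde\rho$-images of explicit expressions, the first in $\rho(\beta),\rho(t_e(\beta)),\rt(Q_e)$ and the second in $\rho(\gamma),\rho(\tau)$ and the $\rt(Q_\alpha)$; the point is to control how these interfere through the edge $e$.

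\emph{Easy commutations.} The curves $\beta$ and $t_e(\beta)$ are supported in the genus-one subsurface cut off by $\alpha_c$; in particular they have zero geometric intersection with $\alpha_c,\alpha_{d_2},\alpha_{d_3}$, so $\rho(\beta)$ and $\rho(t_e(\beta))$ preserve the eigenspace decompositions of $\rho(\alpha_c),\rho(\alpha_{d_2}),\rho(\alpha_{d_3})$ used to define $\rt(Q_c),\rt(Q_{d_2}),\rt(Q_{d_3})$, hence commute with these operators. Since the $\rt(Q_\alpha)$, $\alpha\in\mathcal{P}$, pairwise commute (Subsection \ref{subs1}), $\rt(Q_e)$ commutes with $\rt(Q_c),\rt(Q_{d_2}),\rt(Q_{d_3})$ as well, and therefore so does $\rt(E_e)$. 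In particular $\rt(E_e)$ commutes with every factor $U(A^kQ_c^2)^{\pm1}$, $U(Q_{d_2}Q_{d_3}Q_c^{-1})^{\pm1}$ and with the $Q_c,Q_{d_2},Q_{d_3}$-dependence of $G_0,G_2,G_{-2}$ and of $\delta_1,\delta_2$ (note $\delta_1=\delta_2=e(d_2+d_3)$ once these are viewed in $\mathrm{End}(V)$). Using this, and the invertibility of $\rt(E_c^2)$ from Lemma \ref{sep_rep}(b), the Lemma is equivalent to the statement that the automorphism $X\mapsto \rt(E_e)^{-1}X\rt(E_e)$ of $\mathrm{End}(V)$ fixes $\rt(E_c^2)$.

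\emph{Action of the conjugation.} By Lemma \ref{1_cycle_rep}(a) this automorphism sends $\rt(Q_e)$ to $-A\,\rt(Q_e)$, hence multiplies $\rt(Q_e)^{2m}$ by $A^{2m}$ for all $m\in\Z$; by the previous paragraph it fixes $\rt(Q_c),\rt(Q_{d_2}),\rt(Q_{d_3})$. The remaining — and main — task is to compute its effect on $\rho(\gamma)$ and $\rho(\tau)$. This I would do from the skein relations: expand $\rt(E_e)\rho(\gamma)$ and $\rt(E_e)\rho(\tau)$ by resolving the intersection points of $\beta$ and of $t_e(\beta)$ with $\gamma$ and with $\tau$, using that $\beta$ and $t_e(\beta)$ commute with $\alpha_c$ and that $\rt(Q_e)$ commutes with $\rho(\gamma),\rho(\tau)$ (as $i(\gamma,\alpha_e)=i(\tau,\alpha_e)=0$, the first being visible from $\sigma(\gamma)=E_c^2G_2+G_0+E_c^{-2}G_{-2}$, equation (\ref{emb_sep})). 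One finds that $\rt(E_e)^{-1}\rho(\gamma)\rt(E_e)$ and $\rt(E_e)^{-1}\rho(\tau)\rt(E_e)$ are $\rho(\gamma)$, respectively $\rho(\tau)$, multiplied by explicit rational functions of $\rt(Q_e)$ and $\rt(Q_c)$ — which, when substituted into $\rt(E_c^2)=\tilde\rho\big[Y_2\,A^{-2}U(A^2Q_c^2)^{-1}G_2^{-1}\big]$, are exactly what is needed to cancel the effect of the substitution $Q_e^2\mapsto A^2Q_e^2$ on $G_2^{-1}$ (through the factor $U(Q_e^2Q_c^{-1})$) and on $\delta_1,\delta_2$ (through $e=-(A^2Q_e^2+A^{-2}Q_e^{-2})$, see (\ref{Q})). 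After this cancellation the identity $\rt(E_e)^{-1}\rt(E_c^2)\rt(E_e)=\rt(E_c^2)$ reduces to an equality of Laurent polynomials in $A$ and the $Q_\alpha$, which can be checked directly, as in the proof of Lemma \ref{sep_rep}.

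\emph{The main obstacle} is precisely this last cancellation. The naive ``disjoint support'' argument fails because $\rt(E_e)$ does not commute with $\rt(Q_e)$, and $\rt(Q_e)$ appears both in the coefficient $G_2$ of $\sigma(\gamma)$ and in $\delta_1,\delta_2$; hence $\rt(E_e)$ genuinely fails to commute with $\rho(\gamma)$ and with $G_2^{-1}$ separately, and the work consists in showing that these two non-commutativities compensate. This is the representation-theoretic reflection of the obvious fact that $E_e$ and $E_c^2$ commute in the quantum torus $\mathcal{A}(\Gamma)$ even though $\sigma(\gamma)$ does not commute with $E_e$ there; making it explicit through the skein relations is where the computation lies.
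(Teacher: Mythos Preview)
Your strategy has a genuine gap at its core. The claim that $\rt(E_e)^{-1}\rho(\gamma)\rt(E_e)$ and $\rt(E_e)^{-1}\rho(\tau)\rt(E_e)$ are $\rho(\gamma)$, resp.\ $\rho(\tau)$, multiplied by rational functions of $\rt(Q_e),\rt(Q_c)$ is not justified, and in fact it is false. The curves $\gamma$ and $\beta$ (and $\gamma$ and $t_e(\beta)$) intersect once geometrically; resolving $\beta\gamma$ in the skein algebra produces the \emph{new} curves $t_\beta^{\pm 1}(\gamma)$, which do not lie in the subalgebra generated by $\gamma,\tau$ and the pants curves. Even in the quantum torus $\mathcal{A}(\Gamma)$ itself, conjugation by $E_e$ sends $\sigma(\gamma)=E_c^2G_2+G_0+E_c^{-2}G_{-2}$ to $E_c^2G_2'+G_0'+E_c^{-2}G_{-2}'$ (primes denoting the shift $Q_e\mapsto AQ_e$), which is \emph{not} $\sigma(\gamma)$ times a single rational function, since the three $G$'s are shifted independently. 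So ``one finds'' is covering the entire difficulty: conjugating $\rho(\gamma)$ by $\rt(E_e)$ genuinely introduces curves like $t_\beta(\gamma)$, $t_{t_e(\beta)}(\gamma)$, and one must show that in the full combination defining $\rt(E_c^2)$ all such contributions cancel. Your sketch gives no mechanism for this cancellation.

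The paper proceeds quite differently. It first rewrites $\rt(E_c^2)$ so as to eliminate the $\delta_1,\delta_2$ terms, obtaining $\rt(E_c^2)=-\rt\big[Y_2'\,A^{-2}U(A^4Q_c^2)^{-1}U(A^2Q_c^2)^{-1}G_2^{-1}\big]$ with $Y_2'=\varphi+\psi Q_c^{-2}$, where $\varphi=\gamma-t_c(\gamma)$ and $\psi=\bar\tau-\tau$. Writing $X_1=t_e(\beta)+A^{-1}\beta Q_e^{-2}$, the commutation then reduces to a single identity $X_1Y_2'(A^2Q_e^2Q_c^{-1}-A^{-2}Q_e^{-2}Q_c)=Y_2'X_1(Q_e^2Q_c^{-1}-Q_e^{-2}Q_c)$, which after expanding via $Q_e^{-2}=-A^2e-A^4Q_e^2$ and $Q_c^{-2}=-A^2c-A^4Q_c^2$ decomposes as $\sum C_iQ_c^{2\epsilon}Q_e^{2\epsilon'}$ with each $C_i$ a pure skein element in the curves $\beta,t_e(\beta),\gamma,t_c(\gamma),\tau,\bar\tau,c,e$. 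The actual work is then showing $C_1=C_2=C_3=C_4=0$ by skein computations (done in the appendix for $C_1$), where the new curves $t_\beta(\gamma)$ etc.\ do appear but are shown to cancel pairwise via isotopies such as $t_e^{-1}t_\beta^{-1}(\gamma)\sim t_et_\beta t_c^{-1}(\tau)$. That is the missing idea in your sketch.
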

\begin{proof}
Let us first simplify the expression of $\rt(E_c^2)$. A computation shows that
$$\rt(E_c^2) = - \left[\Big(\gamma-t_c(\gamma)+(\bar{\tau}-\tau) Q_c^{-2}\Big)A^{-2} U(A^4 Q_c^2)^{-1} U(A^2 Q_c^2)^{-1} G_2^{-1} \right]$$ where $\bar{\tau}=t_c^{-1}(\tau)$ is the following curve 
$$\includegraphics[scale = 0.35]{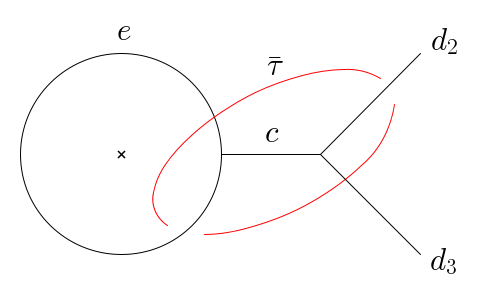}$$ 
Let us define $Y_2' = \gamma-t_c(\gamma)+(\bar{\tau}-\tau) Q_c^{-2}$, $\varphi = \gamma-t_c(\gamma)$ and $\psi = \bar{\tau}-\tau$. Using $A$-commutation we see that $\rt(E_e) \rt(E_c^2) = \rt(E_c^2) \rt(E_e)$ is equivalent to 
\begin{equation} \label{eq11} X_1 Y_2'(A^2 Q_e^2 Q_c^{-1}-A^{-2} Q_e^{-2}Q_c) = Y_2' X_1 (Q_e^2 Q_c^{-1}-Q_e^{-2} Q_c) 
\end{equation} where $X_1=t_{e}(\beta)+A^{-1}\beta Q_e^{-2}$. For two elements $x,y$ we define $[x,y]_A = Axy-A^{-1}yx$. Proving Equality $(\ref{eq11})$ is equivalent to prove that $$\Big( A[X_1,Y_2']_A Q_e^2 +A^{-1} [Y_2',X_1]_AQ_e^{-2} Q_c^2 \Big) Q_c^{-1} = 0$$
Now let us expand the expression $\mathcal{E} = A[X_1,Y_2']_A Q_e^2 +A^{-1} [Y_2',X_1]_AQ_e^{-2} Q_c^2 $ using the fact that $X_1$ commutes with $Q_c$ and $Y_2'$ commutes with $Q_e$ : 
\begin{align*}
& A[X_1,Y_2']_A Q_e^2 +A^{-1} [Y_2',X_1]_AQ_e^{-2} Q_c^2 \\
&= A\Big( [t_e(\beta),\varphi]_A+[t_e(\beta),\psi]_AQ_c^{-2}+A^{-1}[\beta,\varphi]_A Q_e^{-2} +A^{-1}[\beta,\psi]_A Q_c^{-2} Q_e^{-2} \Big) Q_e^2 \\
&+A^{-1}\Big([\varphi,t_e(\beta)]_A+A^{-1}[\varphi,\beta]_A Q_e^{-2}+[\psi,t_e(\beta)]_A Q_c^{-2} +A^{-1}[\psi,\beta]_A Q_c^{-2} Q_e^{-2} \Big)Q_e^{-2} Q_c^2 \\
&=A\Big( [t_e(\beta),\varphi]_A Q_e^2+[t_e(\beta),\psi]_AQ_c^{-2}Q_e^2+A^{-1}[\beta,\varphi]_A  +A^{-1}[\beta,\psi]_A Q_c^{-2}  \Big) \\
&+A^{-1}\Big([\varphi,t_e(\beta)]_A Q_e^{-2} Q_c^2+A^{-1}[\varphi,\beta]_A Q_e^{-4} Q_c^2+[\psi,t_e(\beta)]_A Q_e^{-2} +A^{-1}[\psi,\beta]_A Q_e^{-4} \Big) \\
\end{align*}
Using that $Q_e^{-2} = -A^2 e -A^4 Q_e^2$, $Q_c^{-2} = -A^2 c -A^4 Q_c^2$ and $Q_e^{-4} = A^4 e^2+A^6 e Q_e^2-A^4$ we get $$\mathcal{E} = C_1 Q_c^2 Q_e^2+C_2 Q_e^2+C_3 Q_c^2+C_4$$ where 
\begin{align*}
C_1 & = -A^5[t_e(\beta),\psi]_A-A^3[\varphi,t_e(\beta)]_A+A^4[\varphi,\beta]_Ae \\
C_2 & = A[t_e(\beta),\varphi]_A-A^3[t_e(\beta),\psi]_Ac-A^3[\psi,t_e(\beta)]_A+A^4[\psi,\beta]_Ae \\
C_3 & =  -A^4[\beta,\psi]_A-A[\varphi,t_e(\beta)]_Ae+A^2[\varphi,\beta]_Ae^2-A^2[\varphi,\beta]_A \\
C_4 & = [\beta,\varphi]_A-A^2[\beta,\psi]_Ac-A[\psi,t_e(\beta)]_Ae+A^2[\psi,\beta]_Ae^2-A^2[\psi,\beta]_A
\end{align*}
The elements $C_1,C_2,C_3$ and $C_4$ are skein elements that can be computed using skein relations. After a straightforward computation we get $C_1=C_2=C_3=C_4=0$ which concludes that $\mathcal{E} = 0$. Some more details on those computations are given in the appendix.
\end{proof}
\subsection{Commutation between the square of an edge and a two-cycle}\label{sec:com2cse}
Consider the following portion of the graph 
$$\includegraphics[scale = 0.35]{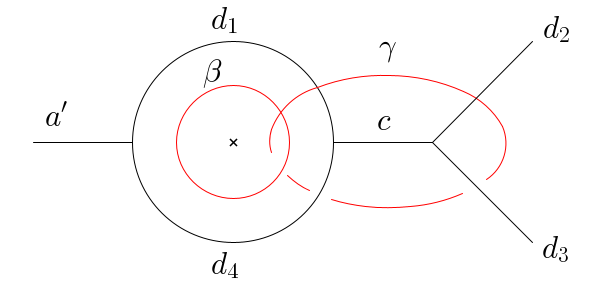}$$ 
Using the proof of Lemma \ref{com1cse} we have $$\rt(E_c^2) = - \tilde{\rho} \left[Y_2'A^{-2} U(A^4 Q_c^2)^{-1} U(A^2 Q_c^2)^{-1} G_2^{-1} \right]$$ with $Y_2' = \gamma-t_c(\gamma)+(\bar{\tau}-\tau) Q_c^{-2}$. Recall also that we set $\varphi = \gamma-t_c(\gamma)$ and $\psi = \bar{\tau}-\tau$. For the two-cycle $\beta$ we have
\begin{align*}
&\rt(E_{d_1} E_{d_4})  = \tilde{\rho}\left[ \Big( \beta A^{-2} Q_{d_1}^{-2} Q_{d_4}^{-2}+ t_{d_1}(\beta) A^{-1} Q_{d_4}^{-2} + t_{d_4}(\beta) A^{-1} Q_{d_1}^{-2} +t_{d_1} t_{d_4}(\beta) \Big) D^{-1} \right]\\
&\rt(E_{d_1} E_{d_4}^{-1}) \tilde{\rho}(F_{1,-1}) = \tilde{\rho}\left[ -\Big( \beta A^{2} Q_{d_1}^{-2} Q_{d_4}^{2}+ t_{d_1}(\beta) A^{3} Q_{d_4}^{2} + t_{d_4}(\beta) A^{-1} Q_{d_1}^{-2} +t_{d_1} t_{d_4}(\beta) \Big) D^{-1}\right] 
\end{align*} where $D = A^2 U(A^2 Q_{d_1}^2) U(A^2 Q_{d_4}^2)$ and $F_{1,-1} = -\dfrac{U(Q_{a'} Q_{d_4} Q_{d_1}^{-1}) U(Q_{c} Q_{d_4} Q_{d_1}^{-1})}{U(A^2 Q_{d_4}^2)U(Q_{d_4}^2)}$.
\begin{lemma} \label{com2cse} The following statements hold :
\begin{enumerate}
\item[(a)] $\rt(E_{d_1} E_{d_4}) \rt(E_c^2) = \rt(E_c^2) \rt(E_{d_1} E_{d_4})$,
\item[(b)] $\rt(E_{d_1} E_{d_4}^{-1}) \rt(E_c^2) = \rt(E_c^2) \rt(E_{d_1} E_{d_4}^{-1})$.
\end{enumerate}
\end{lemma}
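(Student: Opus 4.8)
The strategy is to repeat, essentially verbatim, the proof of Lemma \ref{com1cse}, with the one-cycle edge shift replaced by the two-cycle edge shifts. First I would assemble the ingredients. As recorded at the start of this subsection, $\rt(E_c^2)=-\rt\!\left[Y_2'\,A^{-2}U(A^4Q_c^2)^{-1}U(A^2Q_c^2)^{-1}G_2^{-1}\right]$ with $Y_2'=\varphi+\psi Q_c^{-2}$, $\varphi=\gamma-t_c(\gamma)$, $\psi=\bar{\tau}-\tau$; and from Equations (\ref{2_cycle_lift1}),(\ref{2_cycle_lift2}) I would write $\rt(E_{d_1}E_{d_4})=X\,\rt(D^{-1})$ and $\rt(E_{d_1}E_{d_4}^{-1})\,\rt(F_{1,-1})=-X'\,\rt(D^{-1})$, where $X,X'\in\mathrm{End}(V)$ are $\rt$ applied respectively to
$$\beta A^{-2}Q_{d_1}^{-2}Q_{d_4}^{-2}+t_{d_1}(\beta)A^{-1}Q_{d_4}^{-2}+t_{d_4}(\beta)A^{-1}Q_{d_1}^{-2}+t_{d_1}t_{d_4}(\beta)$$
and to the analogous expression from (\ref{2_cycle_lift2}); here $\rt(F_{1,-1})$ is invertible since its denominator is a product of factors $U(A^kQ_{d_4}^2)$ and its numerator a product of factors $U(Q_{a'}Q_{d_4}Q_{d_1}^{-1})$, $U(Q_cQ_{d_4}Q_{d_1}^{-1})$, all invertible by Lemma \ref{invF} and by the fact that the elements $U(A^k\rt(Q_{d_4})^2)$ lie in $\mathrm{GL}(V)$ (Subsection \ref{subs1}). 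The geometric inputs are that $\gamma,t_c(\gamma),\tau,\bar{\tau}$ are disjoint from the curves $d_1$ and $d_4$, while $\beta,t_{d_1}(\beta),t_{d_4}(\beta),t_{d_1}t_{d_4}(\beta)$ are disjoint from the curve $c$; together with the fact that all the $Q$'s commute, this means $\rt$ of the first family commutes with $\rt(Q_{d_1})$ and $\rt(Q_{d_4})$, and $\rt$ of the second family commutes with $\rt(Q_c)$.

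Next, as in the passage leading to Equation (\ref{eq11}), I would combine these commutations with the $A$-commutation relations $\rt(Q_c)\rt(E_c^2)=A^2\rt(E_c^2)\rt(Q_c)$ (Lemma \ref{sep_rep}(a)) and $\rt(Q_{d_1})\rt(E_{d_1}^{\epsilon_1}E_{d_4}^{\epsilon_2})=(-A)^{\epsilon_1}\rt(E_{d_1}^{\epsilon_1}E_{d_4}^{\epsilon_2})\rt(Q_{d_1})$, and likewise for $Q_{d_4}$ with exponent $(-A)^{\epsilon_2}$ (Lemma \ref{2_cycle_rep}(a),(b)), to push every $Q$-monomial outside. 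This turns statement (a) into an equivalent identity $\rt(P)=0$, where $P$ is a finite $\Z[A^{\pm1}][Q_c^{\pm2},Q_{d_1}^{\pm2},Q_{d_4}^{\pm2}]$-linear combination of the quantum commutators $[u,v]_A=Auv-A^{-1}vu$ with $u\in\{\varphi,\psi\}$ and $v\in\{\beta,t_{d_1}(\beta),t_{d_4}(\beta),t_{d_1}t_{d_4}(\beta)\}$ — the same shape as (\ref{eq11}), only with a larger set of $v$'s. Then, exactly as at the end of the proof of Lemma \ref{com1cse}, I would substitute $Q_{d_i}^{-2}=-A^2d_i-A^4Q_{d_i}^2$ and $Q_{d_i}^{-4}=A^4d_i^2+A^6d_iQ_{d_i}^2-A^4$ (and the analogues for $Q_c$) to write $P=\sum_j C_jM_j$ with $M_j$ monomials in $Q_c^2,Q_{d_1}^2,Q_{d_4}^2$ and each $C_j$ a skein element expressed through quantum commutators $[u,v]_A$ and products of the $u$'s and $v$'s with the edge curves $c,d_1,d_4$; since each $u$ lies in a neighbourhood of $c$ and each $v$ in the genus-one piece carrying the two-cycle, every such bracket resolves by the Kauffman relations to a short combination of multicurves, and one checks $C_j=0$ for all $j$ by a direct skein computation. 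Part (b) is identical with $X'$ in place of $X$ (and Equation (\ref{2_cycle_lift2}) in place of (\ref{2_cycle_lift1})), the only addition being the bookkeeping of commuting $\rt(E_c^2)$, and the two-cycle operators, past $\rt(F_{1,-1})$, which merely shifts the $Q$-variables occurring in $F_{1,-1}$ by powers of $A$.

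I expect the single real obstacle to be the final skein computation $C_j=0$. It is genuinely longer than for Lemma \ref{com1cse}: the two-cycle and its Dehn-twist images live in a genus-one subsurface, so the brackets $[u,v]_A$ resolve into more basis multicurves and there are more monomials $M_j$ to separate. As with Lemma \ref{com1cse}, I would organize this computation in the subsurface cut out by $c$, $d_1$, $d_4$ together with $\gamma,\tau,\bar{\tau},\beta$, carry it out term by term, and relegate the bulk of the verification — which, being an identity in $\Z[A^{\pm1}]$ and the $Q$-variables, can equally be checked on a computer — to the appendix.
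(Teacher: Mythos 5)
Your proposal follows essentially the same route as the paper: reduce the commutation to the vanishing of an expression built from quantum commutators $[u,v]_A$ with $u\in\{\varphi,\psi\}$ and $v$ among $\beta$ and its twists along $d_1,d_4$, use the $A$-commutation with the $Q$'s and the substitutions $Q^{-2}=-A^2d-A^4Q^2$ to separate the expression into monomials $Q_{d_1}^{2\epsilon_1}Q_{d_4}^{2\epsilon_2}Q_c^{2\epsilon_3}$, and verify that each skein coefficient vanishes (part (b) being handled identically; the paper additionally observes that the coefficients for (b) are $A^{\pm2}$ times those for (a), so no new skein computation is needed). The deferred skein verifications are exactly those the paper also relegates to the appendix, so the proposal is correct and matches the paper's proof.
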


\begin{proof} To make the notation less cluttered, we write $Q_4$ for $Q_{d_4}$ and $Q_1$ for $Q_{d_1}$.

Let us prove (a). First let us set $X = A^{-1} \beta Q_{1}^{-2}+t_{d_1}(\beta)$ so that $$\rt(E_{d_1} E_{d_4})  = \Big( A^{-1} X Q_{4}^{-2} + t_{d_4}(X) \Big) D^{-1} $$ Using known commutation relations we have that $\rt(E_{d_1} E_{d_4}) \rt(E_c^2) = \rt(E_c^2) \rt(E_{d_1} E_{d_4})$ is equivalent to $$\mathcal{E} = A^{-1}\Big(A[X,Y_2']_AQ_1^{2}Q_4^2+A^{-1}[Y_2',X]Q_c^2\Big)Q_4^{-2} +t_{d_4}\Big(A[X,Y_2']_AQ_1^{2}Q_4^2+A^{-1}[Y_2',X]Q_c^2\Big)=0$$
If we set $\mathcal{U} = A[X,Y_2']_AQ_1^{2}Q_4^2+A^{-1}[Y_2',X]Q_c^2$, the expression of $\mathcal{E}$ is simply $A^{-1} \mathcal{U} Q_{4}^{-2} + t_{d_4}(\mathcal{U})$. Let us first compute $\mathcal{U}$ by expanding the expression and remembering that $Q_{1}^{-2} = -A^{4} Q_{1}^2-A^2 d_1$ and $Q_{c}^{-2} = -A^{4} Q_{c}^2-A^2 c$  : \begin{align*}
& \mathcal{U} = A[X,Y_2']_AQ_1^{2}Q_4^2+A^{-1}[Y_2',X]Q_c^2 \\
& = -A^5 [t_{d_1}(\beta),\psi]_A Q_c^2 Q_1^2 Q_4^2  \\
& - \Big( A^3 [t_{d_1}(\beta),\psi]_A c +A[t_{d_1}(\beta),\varphi]_A \Big)Q_1^2 Q_4^2 -A^4[\beta,\psi]_A Q_c^2 Q_4^2  - A^2 [\varphi,\beta]_A Q_1^2 Q_c^2 \\
& +\Big( [\beta,\varphi]_A - A^2 [\beta,\psi]_A c\Big) Q_4^2 -A^2[\psi,\beta]_A Q_1^2  + \Big(A^{-1}[\varphi,t_{d_1}(\beta)]_A-[\varphi,\beta]_A d_1 \Big)Q_c^2 \\
& +\Big(A^{-1}[\psi,t_{d_1}(\beta)]_A-[\psi,\beta]_A d_1\Big)
\end{align*}
Using the same technique we get 
$$ A^{-1} \mathcal{U} Q_4^{-2} +t_{d_4}(\mathcal{U}) = \sum_{\epsilon_1,\epsilon_2,\epsilon_3 = 0,1} C_{\epsilon_1,\epsilon_2,\epsilon_3} Q_1^{2 \epsilon_1} Q_4^{ 2 \epsilon_2} Q_c^{2 \epsilon_3}
$$ where 
\begin{align*}
 C_{1,1,1} &= -A^5 [t_{d_4}t_{d_1}(\beta),\psi]_A+A^5[\varphi,\beta]_A \\
 C_{1,1,0} &= A^5 [\psi,\beta]_A-A^3[t_{d_4}t_{d_1}(\beta),\psi]_Ac+A[t_{d_4}t_{d_1}(\beta),\varphi]_A \\
 C_{1,0,1} &= -A^2[\varphi,t_{d_4}(\beta)]_A+A^3[\varphi,\beta]_Ad_4-A^4 [t_{d_1}(\beta),\psi]_A \\
 C_{0,1,1} &= -A^2 [\varphi,t_{d_1}(\beta)]_A+A^3[\varphi,\beta]_A d_1-A^4 [t_{d_4}(\beta),\psi]_A \\
 C_{0,0,1} &= -A^3[\beta,\psi]_A-[\varphi,t_{d_1}(\beta)]_A d_4+A [\varphi,\beta]_A d_1 d_4+A^{-1}[\varphi,t_{d_4}t_{d_1}(\beta)]_A-[\varphi,t_{d_4}(\beta)]_A d_1 \\
 C_{0,1,0} &= A^3[\psi,\beta]_A d_1+[t_{d_4}(\beta),\varphi]_A-A^2[t_{d_4}(\beta),\psi]_A c-A^2 [\psi,t_{d_1}(\beta)]_A \\
 C_{1,0,0} &= -A^2[t_{d_1}(\beta),\psi]_A c+[t_{d_1}(\beta),\varphi]_A-A^2[\psi,t_{d_4}(\beta)]_A+A^3[\psi,\beta]_A d_4 \\
 C_{0,0,0} &= -[\psi,t_{d_4}(\beta)]_A d_1+ A^{-1}[\psi,t_{d_4}t_{d_1}(\beta)]_A+A^{-1} [\beta,\varphi]_A \\
 & -A [\beta,\psi]_Ac+A [\psi,\beta]_A d_1 d_4-[\psi,t_{d_1}(\beta)]_A d_4
\end{align*}
Here again, these elements are in the skein algebra of the surface hence can be computed using skein relations. A long but straightforward computation shows that $C_{\epsilon_1,\epsilon_2,\epsilon_3} = 0$ for all $\epsilon_1,\epsilon_2,\epsilon_3 \in \{0,1\}$. We refer to the appendix for more details. Thus $\mathcal{E} = A^{-1} \mathcal{U} Q_4^{-2} +t_{d_4}(\mathcal{U}) = 0$.

Let us prove (b). The proof goes as in (a) and we use the same notations. In this case $\rt(E_{d_1} E_{d_4}^{-1}) \rt(E_c^2) = \rt(E_c^2) \rt(E_{d_1} E_{d_4}^{-1})$ is equivalent to $$\mathcal{E} = A^{3}\Big(A[Y_2',X]_A Q_c^{2}Q_4^2+A^{-1}[X,Y_2']Q_1^2\Big)Q_4^{2} +t_{d_4}\Big(A[Y_2',X]_A Q_c^{2}Q_4^2+A^{-1}[X,Y_2']Q_1^2\Big)=0$$ Expanding this expression using $Q_{1}^{-2} = -A^{4} Q_{1}^2-A^2 d_1$, $Q_{c}^{-2} = -A^{4} Q_{c}^2-A^2 c$ and $Q_4^4 = -A^{-2} Q_4^2 d_4-A^{-4}$ we get :
$$ \mathcal{E} = \sum_{\epsilon_1,\epsilon_2,\epsilon_3 = 0,1} D_{\epsilon_1,\epsilon_2,\epsilon_3} Q_1^{2 \epsilon_1} Q_4^{ 2 \epsilon_2} Q_c^{2 \epsilon_3}
$$ where 
\begin{align*}
 D_{1,1,1} &= A^5 [\varphi,\beta]_A d_4-A^6[t_{d_1}(\beta),\psi]_A-A^4[\varphi,t_{d_4}(\beta)]_A \\
 D_{1,1,0} &= A^5 [\psi,\beta]_A d_4 +A^2[t_{d_1}(\beta),\varphi]_A-A^4[t_{d_1}(\beta),\psi]_Ac-A^4[\psi,t_{d_4}(\beta)]_A \\
 D_{1,0,1} &= A^3[\varphi,\beta]_A-A^3[t_{d_1}t_{d_4}(\beta),\psi]_A \\
  D_{0,1,1} &= -A^2 [\varphi,t_{d_1}(\beta)]_A d_4+A^3[\varphi,\beta]_A d_1 d_4 -A^5 [\beta,\psi]_A +A[\varphi,t_{d_4}t_{d_1}(\beta)]_A -A^2[\varphi,t_{d_4}(\beta)]_A d_1 \\
 D_{0,0,1} &= -[\varphi,t_{d_1}(\beta)]_A+A [\varphi,\beta]_A d_1-A^2 [t_{d_4}(\beta),\psi]_A  \\
 D_{0,1,0} &= -A^2[\psi,t_{d_1}(\beta)]_A d_4-A^3[\beta,\psi]_Ac+A[\beta,\varphi]_A +A^3 [\psi,\beta]_A d_1 d_4 \\
 &+A[\psi,t_{d_1}t_{d_4}(\beta)]_A-A^2[\psi,t_{d_4}(\beta)]_A d_1 \\
 D_{1,0,0} &= A^3[\psi,\beta]_A +A^{-1}[t_{d_1}t_{d_4}(\beta),\varphi]_A-A[t_{d_1}t_{d_4}(\beta),\psi]_A c \\
 D_{0,0,0} &= A [\psi,\beta]_A d_1-[\psi,t_{d_1}(\beta)]_A-[t_{d_4}(\beta),\psi]_A c+ A^{-2}[t_{d_4}(\beta),\varphi]_A
\end{align*}
Notice that $D_{1,1,1}=A^2C_{1,0,1},$ $D_{1,1,0}=A^2C_{1,0,0},$ $D_{1,0,1}=A^{-2}C_{1,1,1},$ $D_{0,1,1}=A^2C_{0,0,1},$ $D_{0,0,1}=A^{-2}C_{0,1,1},$ $D_{0,1,0}=A^2C_{0,0,0},$ $D_{1,0,0}=A^{-2}C_{1,1,0},$ and $D_{0,0,0}=A^{-2}C_{0,1,0}.$
Hence these elements are again all vanishing.
\end{proof}

\subsection{Proof of Theorem \ref{thm:lift_rep}} \label{proof_lift_rep}

Lemma \ref{gen_t} gives us a list of generators for $\mathcal{A}_A(\Gamma)^0$. Subsections \ref{subs1}, \ref{subs2}, \ref{subs3} and \ref{subs4}, define $\rt$ on these generators. Lemmas \ref{1_cycle_rep} (a) (b), \ref{2_cycle_rep} (a) (b) (c) (d), \ref{sep_rep} (a) (b), \ref{com1cse}, \ref{com2cse} insure that $\rt$ preserve the relations between the generators and therefore defines a representation $\rt : \mathcal{A}_A(\Gamma)^0 \to \mathrm{End}(V)$. Now Lemmas \ref{1_cycle_rep} (c), \ref{2_cycle_rep} (e), \ref{sep_rep} (c) and Equation \ref{Q} tell us that $\rt \circ \sigma_A$ coincide with $\rho$ on $\mathcal{P} \cup \{\beta_1,\ldots,\beta_g,\gamma_1,\ldots,\gamma_{g-1}\}$. As $A$ is a $2p$-th primitive root of unity with $p \ge 3$, $\mathcal{P} \cup \{\beta_1,\ldots,\beta_g,\gamma_1,\ldots,\gamma_{g-1}\}$ generate $S_A(\Sigma)$ by \cite[Thm 1.1]{San} and hence $\rt \circ \sigma_A=\rho$.

\subsection{Classical shadow} \label{classical_shadow}
We still work with an irreducible representation $\rho : S_A(\Sigma) \to \mathrm{End}(V)$ with classical shadow $r$ satisfying the hypothesis of Theorem \ref{thm:lift_rep}. Let $\rt : \mathcal{A}_A(\Gamma)^0 \to \mathrm{End}(V)$ be the lift of $\rho$ built in the previous subsections.
For $\gamma$ a simple a closed curve on $\Sigma$, let $r_{\gamma} = - \mathrm{Tr}(r(\gamma))$.
\begin{proposition} \label{cshadow} 
Let $\gamma \in \{\beta_1,\ldots,\beta_g,\gamma_1,\ldots,\gamma_{g-1}\}$.
\begin{enumerate}
\item[(a)] If $\gamma$ is a one-cycle as in Figure \ref{fig_cycle} (a), then $\rt(Q_e^{2p}) = x_{e}^{2p} \mathrm{Id}_V$ and $$\rt(E_e^p) = \dfrac{r_{\gamma} x_{e}^{-2p} + r_{t_{e}(\gamma)}}{x_{e}^{2p} -  x_{e}^{-2p}} \mathrm{Id}_V$$
\item[(b)] If $\gamma$ is a two-cycle as in Figure \ref{fig_cycle} (b), then $\rt((Q_b Q_c^{ \pm 1})^p) = x_b^p x_{c}^{\pm p} \mathrm{Id}_V$, 
$$\rt((E_b E_c)^p) = \dfrac{r_{\gamma} x_b^{-2p} x_c^{-2p}- r_{t_b(\gamma)} x_c^{-2p}-r_{t_c(\gamma)} x_b^{-2p}+ r_{t_bt_c(\gamma)}}{(x_b^{2p}-x_b^{-2p})(x_c^{2p}-x_c^{-2p})} \mathrm{Id}_V$$
and $$\rt((E_b E_c^{-1})^p)= \dfrac{-r_{\gamma} x_b^{-2p} x_c^{2p}+ r_{t_b(\gamma)} x_c^{2p}+r_{t_c(\gamma)} x_b^{-2p}- r_{t_bt_c(\gamma)}}{(x_b^{2p}-x_b^{-2p})(x_c^{2p}-x_c^{-2p}) \omega} \mathrm{Id}_V$$ where $$\omega = -\prod_{k=0}^{p-1} \dfrac{U\big( (-A)^k x_{a'} x_c x_b^{-1} \big)U\big( (-A)^k x_{a} x_c x_b^{-1} \big)}{U\big( (-A)^k x_c^2 \big)^2}$$
\item[(c)] If $\gamma$ is a separating edge curve as in Figure \ref{fig_cycle} (c), $\rt(Q_c^p) = x_c^p \mathrm{Id}_V$ and
$$\rt(E_c^{2p}) = \dfrac{r_{t_c^{-1}(\gamma)} x_c^{-2p} + r_{\gamma} r_c +r_{t_c(\gamma)} x_c^{2p}}{(x_c^{2p}-x^{-2p})^2 r_c \omega'} \mathrm{Id}_V$$
where $\omega' = \prod_{k=0}^{p-1} U\big((-A)^{k} x_{d_1} x_{d_4} x_{c}^{-1}\big) U\big((-A)^{k} x_{d_2} x_{d_3} x_{c}^{-1}\big)$.

\end{enumerate}

\end{proposition}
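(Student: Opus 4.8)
\emph{Proof plan.} The strategy is to feed the Bonahon--Wong identities $T_p(\rho(\gamma'))=r_{\gamma'}\,\mathrm{Id}_V$ — applied to $\gamma$ and to the curves obtained from it by the Dehn twists occurring in the lift formulas of Section~\ref{sec:sausage_case} — through those lift formulas, and to solve the resulting linear systems for the central scalars $\rt(Q^{\bullet p})$ and $\rt(E^{\bullet p})$. The $Q$-statements are immediate: by construction $\rt(Q_\alpha)$ acts on $V_{\alpha,k}$ by $x_\alpha(-A)^k$, and since $A$ is a primitive $2p$-th root of unity with $p$ odd we have $A^p=-1$, hence $(-A)^p=1$ and $-A$ is a primitive $p$-th root of unity; therefore $\rt(Q_\alpha)^p=x_\alpha^p\,\mathrm{Id}_V$, and the claimed values of $\rt(Q_e^{2p})$, $\rt((Q_bQ_c^{\pm1})^p)$, $\rt(Q_c^p)$ follow since the $Q$'s commute. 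Throughout I will also use the elementary identity $\prod_{j=0}^{p-1}U((-A)^j t)=U(t^p)$ (a product over all $p$-th roots of unity), which is exactly what produces the factors $\omega,\omega'$.

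For the $E$-statements: each monomial occurring ($E_e$, $E_bE_c^{\pm1}$, $E_c^2$, \dots) has the property that $\rt$ of its $p$-th power is \emph{central} in $\mathcal{A}_A(\Gamma)^0$ (again because $(-A)^p=1$), hence a scalar — that scalar is what we must identify. Now grade $V$ by the joint eigenspaces of the $\rt(Q_\alpha)$, which is the same as grading by ``colours modulo $p$''; by Lemmas~\ref{1_cycle_rep}, \ref{2_cycle_rep}, \ref{sep_rep} each of $\rho(\gamma)$ and its twists is, via the lift formulas, a sum of terms each shifting the relevant colour by a fixed amount. Expanding $T_p$ of such an operator, a monomial of length $p-2j$ shifts the colour by an amount that, because $p$ is odd, is $\equiv 0\pmod p$ only for $j=0$ and only for the ``extreme'' monomials (a pure $p$-th power of a single shifting term); thus the colour-preserving, hence scalar, part of $T_p(\rho(\gamma))$ is the sum of the $p$-th powers of those extreme terms with coefficient $1$ (the leading coefficient of $T_p$). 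Equating this with $r_\gamma\,\mathrm{Id}_V$ — and likewise for each Dehn twist of $\gamma$, using the lift formulas and the already-established $\rt(Q^{\bullet p})$-values — gives a square linear system: $2$ equations in the one-cycle case, and $4$ equations (the tensor square of two $2\times2$ systems, as in Proposition~\ref{prop:image}) in the two-cycle case. The ``twisting scalars'' that appear (such as $\omega$) are evaluated from the product identity above, and Lemma~\ref{invF} together with \eqref{eqG1}--\eqref{eqG2} guarantees that the relevant $U$-values are nonzero, so the systems are invertible; the denominators $x_\alpha^{2p}-x_\alpha^{-2p}$ are nonzero by \eqref{eqG2}. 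Solving produces the formulas of (a) and (b).

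The separating case (c) is where the work concentrates and is the main obstacle. Here $\rho(\gamma)=\rt(E_c^2)\rt(G_2)+\rt(G_0)+\rt(E_c^{-2})\rt(G_{-2})$ carries the colour-\emph{preserving} middle term $\rt(G_0)$, so the clean ``only extreme monomials survive'' reduction fails: every monomial with equally many factors $\rt(E_c^2)\rt(G_2)$ and $\rt(E_c^{-2})\rt(G_{-2})$ contributes to the scalar part of $T_p(\rho(\gamma))$. Two routes seem available. One: using $\rt(E_c^2)\rt(E_c^{-2})=\mathrm{Id}_V$ (Lemma~\ref{sep_rep}(b)) and the fact that $\rt(E_c^{\pm2})$ commutes with all $\rt(Q_{d_i})$ and $A$-commutes with $\rt(Q_c)$, each ``matched'' monomial collapses to a product of the scalars $\rt(G_0)$ and $\rt(\hat G_2\hat G_{-2})$ (a fixed product, independent of $\rt(E_c^{2p})$), so the scalar part of $T_p(\rho(\gamma))$ equals $(\rt(E_c^2)\rt(G_2))^p+(\rt(E_c^{-2})\rt(G_{-2}))^p$ plus an explicitly computable correction; this is one equation, and the curve $\tau$ of Figure~\ref{fig_tau}, which enters \eqref{sep_lift1}--\eqref{sep_lift2}, provides a second. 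Two, equivalently: since $\gcd(2,p)=1$, pass to the coarse $\mathbb{Z}/p$-grading on which $\rt(E_c^2)$ acts as a single cyclic shift, restrict $\rho(\gamma)$ to a flag generated by $\rt(E_c^2)$ from a joint eigenvector of the $\rt(Q)$'s, and compute the characteristic polynomial of the resulting cyclic tridiagonal matrix by transfer matrices; since $T_p(\rho(\gamma))=r_\gamma\,\mathrm{Id}_V$ forces that polynomial to be $T_p(x)-r_\gamma$, comparing constant terms expresses $\rt(E_c^{2p})$ and $\rt(E_c^{-2p})$ up to the evident swap, which is resolved by the same computation for $\tau$. Either way one obtains a $2\times2$ system for $\rt(E_c^{2p}),\rt(E_c^{-2p})$, invertible by \eqref{eqG1}--\eqref{eqG2}, yielding the stated formula. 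I expect the explicit evaluation of this ``diagonal correction'' and the verification that it matches the claimed expression to be the only genuinely laborious point — plausibly carried out by direct (machine-assisted) computation, in the spirit of the earlier lemmas of this section.
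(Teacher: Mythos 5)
Your treatment of the $Q$-parts and of cases (a) and (b) is essentially the paper's proof: decompose $V$ into the joint $\rt(Q)$-eigenspaces, observe that only the colour-preserving part of $T_p(\rho(\cdot))$ can contribute to a scalar, note that for a sum of pure shift operators this part is exactly the sum of $p$-th powers of the individual terms (because $p$ is odd and $\gcd(2,p)=1$), and then solve the $2\times 2$ (resp.\ $4\times 4$) linear system obtained from $\gamma$ and its Dehn twists; the factor $\omega$ arises, as you say, from commuting the $Q$-dependent coefficient $F_{1,-1}$ past $E_bE_c^{-1}$ over $p$ steps and evaluating on a joint eigenvector. So far the two arguments coincide.

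For case (c), however, your proposal has a genuine gap where the paper has a specific trick that you are missing. You correctly identify that the colour-preserving middle term $G_0$ ruins the ``only extreme monomials survive'' reduction, but both of your proposed routes then require an \emph{explicit} evaluation of the diagonal correction $H$ (the degree-zero-in-$E_c$ part of $T_p(\rho(\gamma))$), which you defer to an unexecuted ``machine-assisted computation''; moreover it is not clear that bringing in the curve $\tau$ helps, since $\sigma(\tau)$ carries its own degree-zero part and would introduce a new unknown of the same kind. The paper never computes $H$: it introduces the algebra automorphism $\tau_c$ of $\mathcal{A}_A(\Gamma)$ fixing all $Q$'s and all $E_f$ with $f\neq c$ and sending $E_c$ to $(-A)^3E_cQ_c^2$, and observes that $\sigma(t_c(\gamma))=\tau_c(\sigma(\gamma))$. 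Since $\tau_c$ multiplies $E_c^{2k}$ by a power of $Q_c^4$ and fixes the degree-zero part, the three operators $T_p(\rho(t_c^{-1}(\gamma)))$, $T_p(\rho(\gamma))$, $T_p(\rho(t_c(\gamma)))$ share the \emph{same} unknown $H$, and their $E_c^{\pm 2p}$-parts are weighted by $x_c^{\mp 4p},1,x_c^{\pm 4p}$. One then has a $3\times 3$ Vandermonde-type system in the three unknowns $\rt((E_c^2G_2)^p)$, $H$, $\rt((E_c^{-2}G_{-2})^p)$, whose resolution eliminates $H$ for free and yields the stated formula (the extra factor $r_c=-(x_c^{2p}+x_c^{-2p})$ in the denominator is exactly the determinant contribution of this third equation). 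Without this observation — or without actually carrying out the computation of $H$ that you postpone — your argument for (c) is not complete.
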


\begin{proof}
Before starting the proof, let us recall some important facts from Lemma \ref{1_cycle_rep}. Given $e$ an edge, we can decompose $V$ as the direct sum of the subspaces $V_{e,k} = \mathrm{Ker}\Big(\rho(e)+(x_{e}^2 A^{2k+2} + x_{e}^{-2} A^{-2k-2}) \mathrm{Id}_V \Big).$ 

Moreover, each $V_{e,k}$ is stable by all operators $\rt(Q_e)$ and $\rt(E_f), \rt(Q_f)$ with $f\neq e,$ and finally we have $\rt(E_e)(V_{e,k})\subset V_{e,k+1}.$ When computing $T_p(\gamma)$ for $\gamma$ a curve on $\Sigma$ and where $T_p$ is the $p$-th Chebychev polynomial, since we know that $T_p(\gamma)$ is a multiple of $id_V,$ it is sufficient to compute the "diagonal part" of $T_p(\gamma),$ i.e. the contribution that corresponds to maps $V_{e,k}\longrightarrow V_{e,k},$ since the non-diagonal parts will vanish.

(a) $\rt(Q_e^{2p}) = x_{e}^{2p} \mathrm{Id}_V$ is immediate from the definition of $\rt(Q_e)$. Let us compute $\rt(E_e^{p})$, notice that taking the $p$-th Chebyshev in the equality $\rho(\gamma) = \rt(E_e) + \rt(E_e^{-1}) \rt(F)$ we have $$T_p(\rho(\gamma)) = \rt(E_e^p) + \rt((E_e^{-1}F)^p)$$ Similarly $\rho(t_e(\gamma)) = -A^3 \rt(E_e) \rt( Q_e^2) - A^{-1} \rt(E_e^{-1} F) \rt(Q_e^{-2}) $ gives $T_p(\rho(t_e(\gamma))) =  -\rt(E_e^p) \rt( Q_e^{2p}) - \rt((E_e^{-1} F)^p) \rt(Q_e^{-2p})$. We thus have the following system
$$\left\{
    \begin{array}{llll}
r_{\gamma} \mathrm{Id}_V & = \rt(E_e^p) + \rt((E_e^{-1}F)^p)   \\
r_{t_e(\gamma)} \mathrm{Id}_V & =  -\rt(E_e^p) x_e^{2p} - \rt((E_e^{-1} F)^p) x_e^{-2p}  \end{array}
\right.
$$ The conclusion is obtained by solving this system.

(b) $\rt(Q_b Q_c^{ \pm 1}) = x_b^p x_{c}^{ \pm p} \mathrm{Id}_V$ is immediate. For the other part, it is very similar to (a). This time we start from the system :
$$\left\{
    \begin{array}{llll}
\sigma(\gamma) & = X_{1,1} + X_{1,-1} + X_{-1,1} + X_{-1,-1}   \\
\sigma(t_b(\gamma)) & = -A^3 X_{1,1} Q_b^2 - A^3 X_{1,-1} Q_b^2 - A^{-1} X_{-1,1} Q_b^{-2} - A^{-1} X_{-1,-1} Q_b^{-2} \\
\sigma(t_c(\gamma)) & = -A^3 X_{1,1} Q_c^2 - A^{-1} X_{1,-1} Q_c^{-2} - A^{3} X_{-1,1} Q_c^{2} - A^{-1} X_{-1,-1} Q_c^{-2} \\
\sigma(t_bt_c(\gamma)) & = A^6 X_{1,1} Q_b^2 Q_c^2 + A^2 X_{1,-1} Q_b^2 Q_c^{-2} + A^2 X_{-1,1}Q_b^{-2} Q_c^2 + A^{-2} X_{-1,-1} Q_b^{-2} Q_c^{-2}  \end{array}
\right.
$$ where $X_{\epsilon_1,\epsilon_2} = E_b^{\epsilon_1} E_c^{\epsilon_2} F_{\epsilon_1,\epsilon_2}$ with $F_{1,1} = 1$. Applying $\rt$ and taking the $p$-th Chebyshev polynomial for these four equalities, one gets : 

$$\left\{
    \begin{array}{llll}
r_{\gamma} & = X_{1,1}^p + X_{1,-1}^p + X_{-1,1}^p + X_{-1,-1}^p   \\
r_{t_b(\gamma)} & =  X_{1,1}^p x_b^{2p} + X_{1,-1}^p x_b^{2p} + X_{-1,1}^p x_b^{-2p} + X_{-1,-1}^p x_b^{-2p} \\
r_{t_c(\gamma)} & =  X_{1,1}^p x_c^{2p} + X_{1,-1}^p x_c^{-2p} +  X_{-1,1}^p x_c^{2p} + X_{-1,-1}^p x_c^{-2p} \\
r_{t_bt_c(\gamma)} & =  X_{1,1}^p x_b^{2p} x_c^{2p} + X_{1,-1}^p x_b^{2p} x_c^{-2p} +  X_{-1,1}^p x_b^{-2p} x_c^{2p} + X_{-1,-1}^p x_b^{-2p} x_c^{-2p} \end{array}
\right.
$$
In this system, $X_{\pm 1, \pm 1}$ has to be understood as $\rt(X_{\pm 1, \pm 1})$. As $X_{1,1}^p = \rt((E_b E_c)^p)$, the desired expression is immediate by solving the system. For $\rt((E_b E_c^{-1})^p)$, the resolution of the system gives 
$$X_{1,-1}^p  = \rt((E_b E_c^{-1} F_{1,-1})^p) = \dfrac{-r_{\gamma} x_b^{-2p} x_c^{2p}+ r_{t_b(\gamma)} x_c^{2p}+r_{t_c(\gamma)} x_b^{-2p}- r_{t_bt_c(\gamma)}}{(x_b^{2p}-x_b^{-2p})(x_c^{2p}-x_c^{-2p})} \mathrm{Id}_V$$ Now from Lemma \ref{2_cycle_rep} (a),(b) and the fact that $\rt(Q_a)$, $\rt(Q_{a'})$ commute with $\rt(E_b E_c^{-1})$ we get 
$$\rt((E_b E_c^{-1} F_{1,-1})^p) = - \rt((E_b E_c^{-1})^p)\prod_{k=0}^{p-1} \dfrac{U(A^{-2k} Q_{a'} Q_c Q_b^{-1}) U(A^{-2k} Q_a Q_c Q_b^{-1})}{U(A^{2-2k} Q_c^2) U(A^{-2k} Q_c^2)}$$ Let $v$ an eigenvector commun to $Q_a,Q_{a'},Q_b,Q_c$, we see that $$\dfrac{U(A^{-2k} Q_{a'} Q_c Q_b^{-1}) U(A^{-2k} Q_a Q_c Q_b^{-1})}{U(A^{2-2k} Q_c^2) U(A^{-2k} Q_c^2)} v = \prod_{k=0}^{p-1} \dfrac{U\big( (-A)^k x_{a'} x_c x_b^{-1} \big)U\big( (-A)^k x_{a} x_c x_b^{-1} \big)}{U\big( (-A)^k x_c^2 \big)^2} v$$ Thus $\rt((E_b E_c^{-1} F_{1,-1})^p) = \rt((E_b E_c^{-1})^p) \omega$ and we can conclude.

(c) The equality $\rt(Q_c^p) = x_c^p \mathrm{Id}_V$ is clear. Let us prove the other one. Applying the $p$-th Chebyshev polynomial to $\rho(\gamma)= \rt(E_c^2) \rt(G_2) + \rt(G_0) + \rt(E_c^{-2}) \rt(G_{-2})$ we get $$T_p(\rho(\gamma)) = \rt((E_c^2 G_2)^p)+ H + \rt((E_c^{-2} G_{-2})^p)$$ where $H$ is the degree zero term in $E_c$. We recall that
$$\sigma(t_c(\gamma)) = E_c^{2} G_2 A^8 Q_c^4 + G_0+ E_c^{-2}G_{-2}  Q_c^{-4}$$
We want to compute $T_p(\rho(t_c(\gamma)))$, to do that let us introduce an algebra automorphism $\tau_c$ on $\mathcal{A}_A(\Gamma)$ by the formulas $\tau_c(Q_f) = Q_f$ for all $f \in \mathcal{E}$, $\tau_c(E_f) = E_f$ for $f \neq c$ and $\tau_c(E_c) = (-A)^3 E_c Q_c^2$. This indeed defines an automorphism of $\mathcal{A}_A(\Gamma)$. Note that we have that $\tau_c(E_c^k) = (-A)^{(k+1)^2-1} E_c^k$ (for $k \in \Z$) and $\sigma(t_c(\gamma)) = \tau_c(\sigma(\gamma))$. We deduce that $$T_p(\rho(t_{c}(\gamma))) = \rt(\tau_c(\sigma(\gamma))) = \rt((E_c^{2}G_2  A^{8} Q_c^{4})^p) + H+ \rt((E_c^{-2} G_{-2}  Q_c^{-4} )^p)$$ Similarly 
$$T_p(\rho(t_c^{-1}(\gamma))) = \rt(\tau_c^{-1}(\sigma(\gamma))) = \rt((E_c^{2}G_2  A^{-8} Q_c^{-4})^p) + H+ \rt((E_c^{-2} G_{-2}  Q_c^{4})^p) $$ Now we have $$(E_c^{2}G_2  A^{\pm 8} Q_c^{\pm 4})^p = (E_c^2 G_2)^p x_c^{ \pm 4p} \quad \text{and} \quad  (E_c^{-2} G_{-2}  Q_c^{\pm 4})^p = (E_c^{-2} G_{-2}  )^p x_c^{\pm 4p}$$ We deduce the following system 
$$\left\{
    \begin{array}{llll}
T_p(\rho(t_c^{-1}(\gamma))) & = \rt((E_c^{2}G_2)^p)  x_c^{-4p} + H+ \rt((E_c^{-2} G_{-2})^p  x_c^{4p})   \\
T_p(\rho(\gamma)) & = \rt((E_c^2 G_2)^p)+ H+ \rt((E_c^{-2} G_{-2})^p) \\
T_p(\rho(t_c(\gamma))) & = \rt((E_c^{2} G_2)^p) x_c^{4p} + H+ \rt((E_c^{-2}G_{-2})^p)  x_c^{-4p}  \end{array}
\right.
$$ solving this system gives 
$$\rt((E_c^2 G_2)^p) = \dfrac{T_p(t_c^{-1}(\gamma)) x_c^{-2p} - T_p(\gamma) (x_c^{2p}+x_c^{-2p})+T_p(t_c(\gamma)) x_c^{2p}}{(x_c^{2p}-x^{-2p})^2 (x_c^{2p}+x_c^{-2p})} \mathrm{Id}_V$$
On the other hand, 

$$\rt((E_c^2 G_2)^p) = -\rt(E_c^{2p}) \prod_{k=0}^{p-1} U((-A)^{k} x_{d_1} x_{d_4} x_{c}^{-1}) U((-A)^{k} x_{d_2} x_{d_3} x_{c}^{-1})$$ 
\end{proof}

\begin{proof}[Proof of Corollary \ref{cor:unicity}] Let $\rho_1$ and $\rho_2$ be two irreducible representations of $S_A(\Sigma)$ with the same classical shadow $r$ satisfying the hypothesis of Theorem \ref{thm:lift_rep}. Let $\rt_1$ and $\rt_2$ the lifts of $\rho_1$ and $\rho_2$ to $\mathcal{A}_A(\Gamma)^0$ built from the previous subsections. We build the lifts $\rt_1$ and $\rt_2$ from the same quantities $\{x_{\alpha} \, , \, \alpha \in \mathcal{P} \}$. Proposition \ref{cshadow} shows that $\rt_1$ and $\rt_2$ have the same scalar values on the $p$-th powers of the non central generators given in Lemma \ref{gen_t}. Finally Proposition \ref{rep_Q_t} allows us to conclude.

\end{proof}
\appendix
\section{Skein computations}
\label{sec:skein_computations}
In this section, we will give more details on some of the skein computations skipped in Section \ref{sec:com1cse} and \ref{sec:com2cse}, showing that the skein elements $C_1$ in the proof of Lemma \ref{com1cse} and $C_{0,0,0 }$ in the proof of Lemma \ref{com2cse} both vanish.

Recall that for $x,y\in S(\Sigma),$ we write $[x,y]_A$ for $Axy-A^{-1}yx.$

\begin{lemma} \label{lemma:intersect1} Let $a$ and $b$ be two simple closed curves on $\Sigma,$ viewed as elements of $S(\Sigma),$ that intersect once geometrically. Let $t_a$ and $t_b$ be the associated Dehn twists. Then:
\begin{itemize}
\item[-]$a b=A t_a(b)+A^{-1}t_a^{-1}(b)$
\item[-]$[a,b]_A=(A^2-A^{-2})t_a(b)$
\item[-]$t_a(b)=t_b^{-1}(a).$
\end{itemize}
\end{lemma}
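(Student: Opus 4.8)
The plan is to observe that all three statements are \emph{local}: every curve appearing in them ($a$, $b$, $t_a^{\pm 1}(b)$, $t_b^{-1}(a)$) lies in a regular neighbourhood $N$ of $a\cup b$. Since $a$ and $b$ are simple closed curves meeting transversely in exactly one point, $N$ is a one-holed torus, and the inclusion $N\hookrightarrow\Sigma$ induces an algebra homomorphism $S(N)\to S(\Sigma)$; so I would prove all three identities inside $S(N)$ and transport them along this map. I would also fix once and for all a convention for which Dehn twist is ``positive'' and which smoothing of a crossing is the ``$A$-smoothing'', after which everything below becomes bookkeeping.

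For the first identity I would draw $ab$ as a diagram in $N$ in which $a$ is stacked below $b$: this diagram has a single crossing and no closed components, so one application of the Kauffman bracket relation gives $ab=A\,X+A^{-1}Y$, where $X$ and $Y$ are the two smoothings, each an essential simple closed curve in $N$. An explicit inspection of the two smoothings identifies them with $t_a(b)$ and $t_a^{-1}(b)$ (the stacking order is exactly what decides which smoothing carries which coefficient), yielding $ab=A\,t_a(b)+A^{-1}t_a^{-1}(b)$. The second identity is then purely formal: reversing the crossing exchanges the two smoothings, so $ba=A\,t_a^{-1}(b)+A^{-1}t_a(b)$, and hence
$$[a,b]_A=A\,ab-A^{-1}ba=A\big(A\,t_a(b)+A^{-1}t_a^{-1}(b)\big)-A^{-1}\big(A\,t_a^{-1}(b)+A^{-1}t_a(b)\big)=(A^2-A^{-2})\,t_a(b).$$

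For the third identity, which is purely topological, the plan is to pass to homology. Capping the boundary of $N$ produces a torus, on which an essential non-peripheral simple closed curve is determined up to isotopy by its homology class taken modulo sign; since none of $a$, $b$, $t_a(b)$, $t_b^{-1}(a)$ is boundary-parallel in $N$, the same classification applies in $N$. Choosing a basis $([a],[b])$ of $H_1(N;\Z)$ with $\langle [a],[b]\rangle=1$, the Picard--Lefschetz formula gives $[t_a(b)]=[b]-[a]$ and $[t_b^{-1}(a)]=[a]-[b]$, which agree up to sign; therefore $t_a(b)$ and $t_b^{-1}(a)$ are isotopic in $N$, hence in $\Sigma$.

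The routine but genuinely delicate point is the convention bookkeeping inside the first identity: correctly matching the two smoothings of the unique crossing of $ab$ with $t_a(b)$ and $t_a^{-1}(b)$ (rather than the other way around), checking that no closed loop — and hence no factor $-A^2-A^{-2}$ — is created, and making sure the sign convention for ``positive Dehn twist'' used there is the same one used in the Picard--Lefschetz computation of the third identity. Once a single global convention is fixed consistently, the rest is formal.
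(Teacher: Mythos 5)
Your proposal is correct, and for the first two identities it coincides with the paper's (one-line) proof: a single application of the Kauffman relation at the unique crossing of $ab$ identifies the two smoothings with $t_a(b)$ and $t_a^{-1}(b)$, and the $A$-commutator identity then follows formally from the pair $ab=A\,t_a(b)+A^{-1}t_a^{-1}(b)$ and $ba=A\,t_a^{-1}(b)+A^{-1}t_a(b)$. For the third identity the paper simply invokes ``a simple isotopy,'' whereas you localize to the one-holed torus neighbourhood $N$ of $a\cup b$, use the classification of essential non-peripheral simple closed curves in $N$ by primitive homology classes up to sign, and compare $[t_a(b)]=\pm([b]-[a])$ with $[t_b^{-1}(a)]=\pm([a]-[b])$ via Picard--Lefschetz. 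This is a legitimate and more checkable substitute for drawing the isotopy, with the small virtue of being insensitive to the sign convention for the twist (both classes agree up to sign no matter which convention is chosen). Your emphasis on fixing one global convention for the skein product's stacking order and for the positive twist is well placed: the first identity is exactly the statement that pins these conventions against each other, and the remaining two identities are then convention-independent consequences.
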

\begin{proof}
The first two points are direct consequence of Kauffman relations, and the third is a simple isotopy.
\end{proof}
\begin{lemma}\label{lemma:skein_comput1} Let $\gamma, \tau,\bar{\tau},\beta,c,e$ be the curves described in Section \ref{sec:com1cse}, let $\varphi=\gamma-t_c(\gamma),$ $\psi= \bar{\tau}-\tau=\bar{\tau}-t_c(\tau),$ and let $$C_1=-A^5[t_e(\beta),\psi]_A-A^3[\varphi,t_e(\beta)]_A+A^4[\varphi,\beta]_A e.$$
Then $C_1=0.$
\end{lemma}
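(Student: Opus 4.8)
The plan is to express each of the three bracket terms in $C_1$ as an explicit linear combination of simple multicurves on $\Sigma$, using only Lemma \ref{lemma:intersect1} and the Kauffman relations, and then check that everything cancels. The key geometric input is the intersection pattern of the curves $\gamma,\tau,\bar\tau,\beta,c,e$ near the relevant portion of the graph: recall that $e$ meets $\gamma$ once (it is the ``one-cycle'' edge of $\gamma$), $e$ is disjoint from $\tau$ and $\bar\tau$ (these run along the separating-edge part), and $t_e$ acts by the Dehn twist about $e$. First I would record, via Lemma \ref{lemma:intersect1}, that $t_e(\beta) = t_\beta^{-1}(e)$ and that $e\cdot\beta = A\,t_e(\beta) + A^{-1}t_e^{-1}(\beta)$, so $[t_e(\beta),\,\cdot\,]_A$ applied to a curve disjoint from $e$ simplifies; in particular $[t_e(\beta),\psi]_A$ can be resolved because $\psi = \bar\tau - \tau$ is (a difference of) curves whose intersection with $t_e(\beta)$ is controlled.

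Concretely, the steps are: (1) compute $[\varphi,\beta]_A\, e = (A\varphi\beta - A^{-1}\beta\varphi)e$ by first resolving $\varphi\beta$ and $\beta\varphi$ using skein relations near the single intersection point of $\beta$ with $\gamma$ (and with $t_c(\gamma)$), then stacking $e$; (2) compute $[\varphi, t_e(\beta)]_A$ similarly, noting that $t_e(\beta)$ differs from $\beta$ only by a twist about $e$ and that $\varphi$ has the same intersection number with $t_e(\beta)$ as with $\beta$; (3) compute $[t_e(\beta),\psi]_A$ using that $\psi$ meets $t_e(\beta)$ in a prescribed way, expanding via Kauffman. Each of these produces a $\Z[A^{\pm1}]$-linear combination of at most a handful of multicurves (the curves obtained by smoothing the crossings: these are $e$, $c$, products like $ce$, $\beta^2$-type curves, and a few ``band'' curves). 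Then I would substitute into $C_1 = -A^5[t_e(\beta),\psi]_A - A^3[\varphi,t_e(\beta)]_A + A^4[\varphi,\beta]_A\,e$ and collect coefficients of each resulting multicurve; the claim is that all coefficients vanish identically as Laurent polynomials in $A$.

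The main obstacle is purely bookkeeping: one must correctly identify which simple multicurves arise after smoothing all the crossings, and in particular track the curves $\tau$, $\bar\tau$, $\varphi$ and their images under the twists, since a sign or a framing error propagates. A useful consistency check, rather than a blind computation, is to use the ambient identity that motivated the lemma: $C_1$ is (a piece of) the obstruction to $\rt(E_e)\rt(E_c^2) = \rt(E_c^2)\rt(E_e)$, which must hold because $E_e$ and $E_c^2$ are images under $\sigma$ of genuine commuting elements built from curves in $S(\Sigma)$ — indeed $E_e$ and $E_c^2$ both lie (after multiplying by invertible rational functions of the $Q$'s) in the image of $\sigma$, and $\sigma$ is an algebra morphism, so the commutator vanishes in $\mathcal{A}(\Gamma)$; isolating the coefficient of the appropriate $E^k$ gives exactly $C_1 = 0$. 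I would present the direct skein computation as the proof, but would phrase the organization so that one can verify it either by hand or, as the authors note, by a computer algebra check of the resulting polynomial identity in $A$ and the $Q_e$.
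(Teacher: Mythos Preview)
Your direct-expansion plan is a valid route in principle, but there are two concrete problems. First, your geometric setup is off: in Section~\ref{sec:com1cse} the curve $\gamma$ is the \emph{separating-edge} curve (attached to the edge $c$), not the one-cycle; the one-cycle curve is $\beta$, and it is $\beta$, not $\gamma$, that meets the pants curve $e$ once. The curve $\gamma$ is disjoint from $e$ and meets $c$ twice, while $\gamma,\tau,\bar\tau$ each meet $\beta$ (and $t_e(\beta)$) once. If you run the expansion with the wrong intersection pattern, the smoothings you list will not be the right multicurves.

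Second, your ``consistency check'' is circular as stated. The elements $E_e$ and $E_c^2$ certainly commute in $\mathcal{A}(\Gamma)$, but they are \emph{not} of the form $\sigma(\text{skein})\cdot(\text{rational in }Q)$: the formulas (\ref{1_cycle_lift}) and (\ref{sep_lift1}) express them as sums of several such terms (involving $\sigma(\beta),\sigma(t_e(\beta))$, respectively $\sigma(\gamma),\sigma(\tau)$, etc.). More to the point, Lemma~\ref{com1cse} is needed precisely to show that $\tilde\rho$ is a well-defined algebra morphism on $\mathcal{A}_A(\Gamma)^0$; you cannot invoke that $\tilde\rho$ respects the commutation of $E_e$ and $E_c^2$ before this is established. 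One could try instead to argue inside $\mathcal{A}(\Gamma)$ that the identity $\mathcal{E}=0$ there forces each $C_i=0$ in $S(\Sigma)$, but that requires knowing that $\{1,Q_e^2,Q_c^2,Q_e^2Q_c^2\}$ are linearly independent over $\sigma(S(\Sigma))$, which is an extra statement you would have to prove.

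The paper's argument is shorter and avoids brute force. The key observation you are missing is that since $\varphi=\gamma-t_c(\gamma)$ and $\psi=\bar\tau-t_c(\bar\tau)$, while $\beta,t_e(\beta),e$ are all $t_c$-invariant, one has $C_1=x_1-t_c(x_1)$ with $x_1=-A^5[t_e(\beta),\bar\tau]_A-A^3[\gamma,t_e(\beta)]_A+A^4[\gamma,\beta]_A\,e$. Now each bracket in $x_1$ involves curves meeting \emph{once}, so Lemma~\ref{lemma:intersect1} gives each as $(A^2-A^{-2})$ times a single simple curve; after one further application to the product with $e$, $x_1$ reduces to four terms which cancel in pairs via the identity $t_{t_e(\beta)}^{-1}(\gamma)=t_e t_\beta^{-1}t_e^{-1}(\gamma)=t_e t_\beta^{-1}(\gamma)$ (using that $\gamma$ is disjoint from $e$) and an explicit isotopy $t_e^{-1}t_\beta^{-1}(\gamma)\sim t_e t_\beta t_c^{-1}(\tau)$. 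This gives $x_1=0$ outright, hence $C_1=0$, with essentially no multicurve bookkeeping.
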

\begin{proof}
One can check that $C_1=x_1-t_c(x_1),$ where $x_1=-A^5[t_e(\beta),\bar{\tau}]_A-A^3[\gamma,t_e(\beta)]_A+A^4[\gamma,\beta]_A.$ We compute $x_1$ using Lemma \ref{lemma:intersect1}:
\begin{multline*}x_1=(A^2-A^{-2})\left(-A^5 t_e t_{\beta} t_c^{-1}(\tau) -A^3 t_{t_e(\beta)}^{-1}(\gamma) +A^4 t_{\beta}^{-1}(\gamma)e \right)
\\ =(A^2-A^{-2})\left( -A^5 t_e t_{\beta} t_c^{-1}(\tau) -A^3 t_{t_e(\beta)}^{-1}(\gamma) +A^5 t_e^{-1}t_{\beta}^{-1}(\gamma) +A^3 t_e t_{\beta}^{-1}(\gamma) \right)
\end{multline*}
To conclude, let us remark that $t_{t_e(\beta)}^{-1}(\gamma)=t_e t_{\beta}^{-1} t_e^{-1}(\gamma)= t_e t_{\beta}^{-1}(\gamma),$ and that a simple isotopy shows that $t_e^{-1}t_{\beta}^{-1}(\gamma)\sim t_e t_{\beta} t_c^{-1}(\tau).$ Hence $x_1=0$ and therefore $C_1=0.$
\end{proof}
\begin{lemma}\label{lemma:skein_comput2} Let $\gamma, \bar{\tau},\beta,c,d_1,d_4$ be the curves described in Section \ref{sec:com2cse}, let $\varphi=\gamma-t_c(\gamma)$ and $\psi=\bar{\tau}-t_c(\bar{\tau})$ and let
\begin{multline*}C_{0,0,0}=-[\psi,t_{d_4}(\beta)]_A+A^{-1}[\psi,t_{d_4}t_{d_1}(\beta)]_A+A^{-1}[\beta,\varphi]_A
\\-A[\beta,\psi]_A c +A[\psi,\beta]_A d_1 d_4 -[\psi,t_{d_1}(\beta)]_A d_4.
\end{multline*}
Then $C_{0,0,0}=0.$
\end{lemma}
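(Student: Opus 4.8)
The plan is to follow the proof of Lemma~\ref{lemma:skein_comput1} verbatim. The geometric input is that the Dehn twist $t_c$ along the curve $\alpha_c$ dual to the separating edge $c$ fixes every curve occurring in $C_{0,0,0}$ except $\bar\tau$ and $\gamma$: the two-cycle $\beta$ and its twists $t_{d_1}(\beta),\,t_{d_4}(\beta),\,t_{d_1}t_{d_4}(\beta)$ are supported in the lobe that $\alpha_c$ cuts off, and the curves $c,d_1,d_4$ are disjoint from $\alpha_c$, whereas $t_c(\bar\tau)=\tau$ and $t_c(\gamma)\neq\gamma$. Hence $\varphi=\gamma-t_c(\gamma)$ and $\psi=\bar\tau-t_c(\bar\tau)$ are "$t_c$-coboundaries". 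Using $\Z[A^{\pm1}]$-bilinearity of $[\,\cdot\,,\,\cdot\,]_A$ together with the fact that $t_c$ is an algebra automorphism fixing $\beta$, the curves $t_{d_i}(\beta)$, and $c,d_1,d_4$, one checks that
\[
C_{0,0,0}=x_0-t_c(x_0),
\]
where
\begin{multline*}
x_0=-[\bar\tau,t_{d_4}(\beta)]_A+A^{-1}[\bar\tau,t_{d_4}t_{d_1}(\beta)]_A+A^{-1}[\beta,\gamma]_A\\
-A[\beta,\bar\tau]_A\,c+A[\bar\tau,\beta]_A\,d_1 d_4-[\bar\tau,t_{d_1}(\beta)]_A\,d_4 .
\end{multline*}
So it is enough to prove $x_0=0$.

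To prove $x_0=0$ I would expand each bracket via Lemma~\ref{lemma:intersect1}: whenever the two curves meet once, $[a,b]_A=(A^2-A^{-2})t_a(b)$, and then simplify with $t_a(b)=t_b^{-1}(a)$ and the conjugation identities $t_{t_{d_i}(\beta)}^{\pm1}=t_{d_i}t_\beta^{\pm1}t_{d_i}^{-1}$, bringing every resulting curve to a common normal form. For the two terms carrying an extra curve factor, namely $-A[\beta,\bar\tau]_A\,c$ and $A[\bar\tau,\beta]_A\,d_1d_4$, one first applies the Kauffman relations to resolve the product (with $\alpha_c$, resp. with $d_1\cup d_4$) before comparing. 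Collecting all contributions, the terms of $x_0$ should cancel in pairs after recognizing a handful of isotopies of curves in the complement of $\alpha_c$, exactly as the two pairs cancelled in the proof of Lemma~\ref{lemma:skein_comput1}; this yields $x_0=0$ and hence $C_{0,0,0}=x_0-t_c(x_0)=0$.

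The difficulty is purely in the bookkeeping. One has to verify for each bracket in $x_0$ that the two curves really meet in a single point (so Lemma~\ref{lemma:intersect1} applies), paying special attention to $[\beta,\gamma]_A$ and $[\bar\tau,t_{d_1}t_{d_4}(\beta)]_A$ where the two-cycle or its double twist occurs, and then to make the cancellations explicit one must track the elementary but numerous isotopies in the surface. This is the same kind of surface-topology computation as in Lemma~\ref{lemma:skein_comput1}, only with more terms and with two products that must be smoothed first; I expect it to be routine once the decomposition $C_{0,0,0}=x_0-t_c(x_0)$ is in place.
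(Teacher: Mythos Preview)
Your decomposition $C_{0,0,0}=x_0-t_c(x_0)$ is correct and is exactly how the paper begins. The gap is in the next step: you assert that it suffices to show $x_0=0$, and you expect the terms to cancel in pairs as they did for $x_1$ in Lemma~\ref{lemma:skein_comput1}. They do not. If you carry out the expansion you describe (applying Lemma~\ref{lemma:intersect1} to each bracket, then resolving the products with $c$ and with $d_1d_4$ via Kauffman relations), the result is
\[
x_0 \;=\; -A(A^2-A^{-2})\,t_{\beta}(\delta_2), \qquad \delta_2=d_1d_2+d_3d_4,
\]
which is not zero. The point that rescues the argument is that $t_{\beta}(\delta_2)$ is a combination of multicurves disjoint from the curve $c$, so $x_0$ is $t_c$-invariant and hence $C_{0,0,0}=x_0-t_c(x_0)=0$.

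So the strategy you outlined will work, but only once you drop the expectation $x_0=0$ and instead aim for the weaker (and true) statement that $x_0$ is $t_c$-invariant. In practice this means you must actually perform the computation rather than argue by analogy with Lemma~\ref{lemma:skein_comput1}: the term $-A[\beta,\bar\tau]_A\,c$ contributes, after resolving $\bar\tau c=A^2 t_c^{-1}(\gamma)+\delta_2+A^{-2}\gamma$, a surviving piece $-A t_{\beta}(\delta_2)$ that has no partner among the other terms.
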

\begin{proof}
Again, we notice that $C_{0,0,0}=x_{0,0,0}-t_c(x_{0,0,0})$ where 
\begin{multline*}x_{0,0,0}=-[\bar{\tau},t_{d_4}(\beta)]_A+A^{-1}[\bar{\tau},t_{d_4}t_{d_1}(\beta)]_A+A^{-1}[\beta,\gamma]_A
\\ -A[\beta,\bar{\tau}]_A c +A[\bar{\tau},\beta]_A d_1 d_4 -[\bar{\tau},t_{d_1}(\beta)]_A d_4.
\end{multline*}
 Using Lemma \ref{lemma:intersect1}, we get:
\begin{multline*}\frac{1}{A^2-A^{-2}}x_{0,0,0}=-t_{d_4}t_{\beta}^{-1}(\bar{\tau})d_1+A^{-1}t_{d_1}t_{d_4}t_{\beta}^{-1}(\bar{\tau})+A^{-1}t_{\beta}(\gamma)
\\ +A t_{\beta}(\bar{\tau})c+A t_{\beta}^{-1}(\bar{\tau}) d_1 d_4 -t_{d_1}t_{\beta}^{-1}(\bar{\tau})d_4.
\end{multline*}
Notice that $t_{\beta}(\bar{\tau})c=t_{\beta}(\bar{\tau}c).$ The curves $\bar{\tau}$ and $c$ have geometric intersection $2$ and an easy skein computation shows that $\bar{\tau}c=A^2 t_c^{-1}(\gamma) + \delta_2 +A^{-2} \gamma,$ where $\delta_2=d_1d_2+d_3d_4.$ Using this and the second part of Lemma \ref{lemma:intersect1}, we get
\begin{multline*}\frac{1}{A^2-A^{-2}}x_{0,0,0}=-A^{-1} t_{d_1}t_{d_4} t_{\beta}^{-1}(\bar{\tau}) -A t_{d_1}^{-1}t_{d_4}t_{\beta}^{-1}(\bar{\tau}) +A^{-1}t_{d_1}t_{d_4}t_{\beta}^{-1}(\bar{\tau})+A^{-1}t_{\beta}(\gamma)
\\-A^3 t_{\beta}t_c^{-1}(\gamma) -At_{\beta}(\delta_2)-A^{-1}t_{\beta}(\gamma)
\\ +A^3 t_{d_1}^{-1}t_{d_4}^{-1}t_{\beta}^{-1}(\bar{\tau})+A t_{d_1}t_{d_4}^{-1}t_{\beta}^{-1}(\bar{\tau}) + A  t_{d_1}^{-1}t_{d_4}t_{\beta}^{-1}(\bar{\tau}) +A^{-1}  t_{d_1}t_{d_4}t_{\beta}^{-1}(\bar{\tau})
\\ -A t_{d_1}t_{d_4}^{-1}t_{\beta}^{-1}(\bar{\tau})-A^{-1} t_{d_1} t_{d_4} t_{\beta}^{-1}(\bar{\tau})
\\ = -A t_{\beta}(\delta_2)-A^3 t_{\beta}t_c^{-1}(\gamma) +A^3 t_{d_1}^{-1} t_{d_4}^{-1} t_{\beta}^{-1}(\bar{\tau}).
\end{multline*}
A drawing shows that the simple closed curves $t_{d_1}^{-1} t_{d_4}^{-1} t_{\beta}^{-1}(\bar{\tau})$ and $t_{\beta}t_c^{-1}(\gamma)$ are  actually isotopic. Hence $x_{0,0,0}=-A(A^2-A^{-2})t_{\beta}(\delta_2).$ But $t_{\beta}(\delta_2)$ is a linear combination of multicurves that are disjoint from $c,$ hence $x_{0,0,0}$ is $t_c$ invariant, and $C_{0,0,0}=0.$
\end{proof}
\bibliographystyle{hamsalpha}
\bibliography{biblio}

\providecommand{\bysame}{\leavevmode\hbox to3em{\hrulefill}\thinspace}
\providecommand{\href}[2]{#2}
\providecommand{\eprint}{\begingroup \urlstyle{rm}\Url}
\begin{thebibliography}{BHMV95}

\bibitem[BHMV95]{BHMV}
C.~Blanchet, N.~Habegger, G.~Masbaum, and P.~Vogel, \emph{Topological quantum
  field theories derived from the {K}auffman bracket}, Topology \textbf{34}
  (1995), no.~4, 883--927.

\bibitem[BL07]{BL07}
Francis Bonahon and Xiaobo Liu, \emph{Representations of the quantum
  {T}eichm\"{u}ller space and invariants of surface diffeomorphisms}, Geom.
  Topol. \textbf{11} (2007), 889--937.

\bibitem[Bul97]{Bul97}
Doug Bullock, \emph{Rings of {${\rm SL}_2({\bf C})$}-characters and the
  {K}auffman bracket skein module}, Comment. Math. Helv. \textbf{72} (1997),
  no.~4, 521--542.

\bibitem[BW11]{BW11}
Francis Bonahon and Helen Wong, \emph{Quantum traces for representations of
  surface groups in {${\rm SL}_2(\Bbb C)$}}, Geom. Topol. \textbf{15} (2011),
  no.~3, 1569--1615.

\bibitem[BW16]{BW16}
\bysame, \emph{Representations of the {K}auffman bracket skein algebra {I}:
  invariants and miraculous cancellations}, Invent. Math. \textbf{204} (2016),
  no.~1, 195--243.

\bibitem[BW17]{BW17}
\bysame, \emph{Representations of the {K}auffman bracket skein algebra {II}:
  {P}unctured surfaces}, Algebr. Geom. Topol. \textbf{17} (2017), no.~6,
  3399--3434.

\bibitem[BW19]{BW19}
\bysame, \emph{Representations of the {K}auffman bracket skein algebra {III}:
  closed surfaces and naturality}, Quantum Topol. \textbf{10} (2019), no.~2,
  325--398.

\bibitem[BWY21]{BWY21}
Francis Bonahon, Helen Wong, and Tian Yang, \emph{Asymptotics of quantum
  invariants of surface diffeomorphisms i: conjecture and algebraic
  computations}, 2021.

\bibitem[CM12]{CM12}
Laurent Charles and Julien March\'{e}, \emph{Multicurves and regular functions
  on the representation variety of a surface in {SU}(2)}, Comment. Math. Helv.
  \textbf{87} (2012), no.~2, 409--431.

\bibitem[Det16]{Det16}
Renaud Detcherry, \emph{Asymptotic formulae for curve operators in {TQFT}},
  Geom. Topol. \textbf{20} (2016), no.~6, 3057--3096.

\bibitem[DLFS]{DLFS}
Renaud Detcherry, Thomas Le~Fils, and Ramanujan Santharoubane, \emph{Compatible
  pants decompositions for $\mathrm{SL}_2(\mathbb{C})$ characters of a
  surface}.

\bibitem[FG00]{FG00}
Charles Frohman and R\u{a}zvan Gelca, \emph{Skein modules and the
  noncommutative torus}, Trans. Amer. Math. Soc. \textbf{352} (2000), no.~10,
  4877--4888.

\bibitem[FKBL19]{FKBL19}
Charles Frohman, Joanna Kania-Bartoszynska, and Thang L\^{e}, \emph{Unicity for
  representations of the {K}auffman bracket skein algebra}, Invent. Math.
  \textbf{215} (2019), no.~2, 609--650.

\bibitem[GJS19]{GJS19}
Iordan Ganev, David Jordan, and Pavel Safronov, \emph{The quantum frobenius for
  character varieties and multiplicative quiver varieties}, 2019.

\bibitem[GM07]{GM07}
Patrick~M. Gilmer and Gregor Masbaum, \emph{Integral lattices in {TQFT}}, Ann.
  Sci. \'{E}cole Norm. Sup. (4) \textbf{40} (2007), no.~5, 815--844.

\bibitem[Le19]{Le19}
Thang T.~Q. Le, \emph{Quantum {T}eichm\"{u}ller spaces and quantum trace map},
  J. Inst. Math. Jussieu \textbf{18} (2019), no.~2, 249--291.

\bibitem[Le21]{Le21}
\bysame, \emph{Faithfullness of geometric action of skein algebras}, 2021.

\bibitem[LZ17]{LZ17}
Thang T.~Q. L\^{e} and Xingru Zhang, \emph{Character varieties,
  {$A$}-polynomials and the {AJ} conjecture}, Algebr. Geom. Topol. \textbf{17}
  (2017), no.~1, 157--188.

\bibitem[MS21]{MS21}
Julien March\'{e} and Ramanujan Santharoubane, \emph{Asymptotics of quantum
  representations of surface groups}, Ann. Sci. \'{E}c. Norm. Sup\'{e}r. (4)
  \textbf{54} (2021), no.~5, 1275--1296.

\bibitem[MV94]{MV94}
G.~Masbaum and P.~Vogel, \emph{{$3$}-valent graphs and the {K}auffman bracket},
  Pacific J. Math. \textbf{164} (1994), no.~2, 361--381.

\bibitem[Prz91]{Prz91}
J\'{o}zef~H. Przytycki, \emph{Skein modules of {$3$}-manifolds}, Bull. Polish
  Acad. Sci. Math. \textbf{39} (1991), no.~1-2, 91--100.

\bibitem[PS00]{PS00}
J\'{o}zef~H. Przytycki and Adam~S. Sikora, \emph{On skein algebras and {${\rm
  Sl}_2({\bf C})$}-character varieties}, Topology \textbf{39} (2000), no.~1,
  115--148.

\bibitem[San22]{San}
Ramanujan Santharoubane, \emph{Algebraic generators of the skein algebra of a
  surface}, 2022, \eprint{1803.09804}.

\bibitem[Tak15]{Takenov}
Nurdin Takenov, \emph{Representations of the kauffamn skein algebra of small
  surfaces}, 2015.

\bibitem[Tur88]{Tur88}
V.~G. Turaev, \emph{The {C}onway and {K}auffman modules of a solid torus}, Zap.
  Nauchn. Sem. Leningrad. Otdel. Mat. Inst. Steklov. (LOMI) \textbf{167}
  (1988), no.~Issled. Topol. 6, 79--89, 190.

\bibitem[Tur91]{Tur91}
Vladimir~G. Turaev, \emph{Skein quantization of {P}oisson algebras of loops on
  surfaces}, Ann. Sci. \'{E}cole Norm. Sup. (4) \textbf{24} (1991), no.~6,
  635--704.

\end{thebibliography}
\end{document}